\newtheorem{theorem}{Theorem}[section]
\newtheorem{lemma}[theorem]{Lemma}
\newtheorem{corollary}[theorem]{Corollary}
\newtheorem{fact}[theorem]{Fact}
\newtheorem{proposition}[theorem]{Proposition}
\theoremstyle{definition}
\newtheorem{remark}[theorem]{Remark}
\newtheorem{definition}[theorem]{Definition}
\def \L {\mathcal L}
\def \d {\delta}
\def \LL {\Lambda}
\def \D {\Delta}
\def \S {\Sigma}
\def \gtcf{G-\operatorname{TCF}}
\def \DCF{\operatorname{DCF}}
\def \DCFm{\operatorname{DCF}_{0,m}}
\def \dcl{\operatorname{dcl}}
\def \acl{\operatorname{acl}}
\def \alg{\operatorname{alg}}
\def \tp{\operatorname{tp}}
\def \GDCF{G\operatorname{-DCF}_{0,m}}
\def \GDF{G\operatorname{-DF}_{0,m}}
\def \GTCF{G\operatorname{-TCF}}
\def \NN {\mathbb N}
\def \V {\mathcal V}
\def \U {\mathcal U}
\def \aut {\operatorname{Aut}}
\def \td {\operatorname{trdeg}}
\def \Frac {\operatorname{Frac}}
\def \SU{\operatorname{SU}}
\def \eq{\operatorname{eq}}
\def \theo {\operatorname{Th}}
\def \gal {\operatorname{Gal}}
\def\Ind#1#2{#1\setbox0=\hbox{$#1x$}\kern\wd0\hbox to 0pt{\hss$#1\mid$\hss}
\lower.9\ht0\hbox to 0pt{\hss$#1\smile$\hss}\kern\wd0}
\def\ind{\mathop{\mathpalette\Ind{}}}
\def\Notind#1#2{#1\setbox0=\hbox{$#1x$}\kern\wd0\hbox to 0pt{\mathchardef
\nn=12854\hss$#1\nn$\kern1.4\wd0\hss}\hbox to
0pt{\hss$#1\mid$\hss}\lower.9\ht0 \hbox to
0pt{\hss$#1\smile$\hss}\kern\wd0}
\title[Differential fields with finite group actions]{Model theory of differential fields \\ with finite group actions}
\author{Daniel Max Hoffmann$^{\dagger}$}
\thanks{$^{\dagger}$SDG}
\address{Daniel Max Hoffmann, Instytut Matematyki\\
Uniwersytet Warszawski\\
Warszawa\\
Poland}
\email{daniel.max.hoffmann@gmail.com}
\urladdr{{https://sites.google.com/site/danielmaxhoffmann/}}
\author{Omar Le\'on S\'anchez$^{\ast}$}
\address{Omar Le\'on S\'anchez, Department of Mathematics, University of Manchester, Oxford Road, Manchester, United Kingdom M13 9PL}
\email{omar.sanchez@manchester.ac.uk}
\date{\today}
\subjclass[2010]{03C60, 12L12, 12H05, 12H10}
\keywords{model theory, differential fields, difference fields, group actions}
\begin{document}

\maketitle

\begin{abstract}
Let $G$ be a finite group. We explore the model theoretic properties of the class of 
differential fields of characteristic zero in $m$ commuting derivations equipped with a $G$-action by differential field automorphisms. In the language of $G$-differential rings (i.e. the language of rings with added symbols for derivations and automorphisms), we prove that this class has a model-companion -- denoted $\GDCF$. We then deploy the model-theoretic tools developed in the first author's paper \cite{Hoff3} to show that any model of $\GDCF$ is supersimple (but unstable when $G$ is nontrivial), a PAC-differential field (and hence differentially large in the sense of the second author and Tressl \cite{OmarMarcus}), and admits elimination of imaginaries after adding a tuple of parameters. We also address model-completeness and supersimplicity of theories of bounded PAC-differential fields (extending the results of Chatzidakis-Pillay \cite{ChPi} on bounded PAC-fields).
\end{abstract}


\section{Introduction}

The model theory of group actions on fields, by field automorphisms, has been studied for at least the past three decades. For instance, for arbitrary groups by Sj\"ogren \cite{Sjo}; for $\mathbb Q$ by Medvedev \cite{med1}; for free groups $F_n$ by Chatzidakis and Hrushovski \cite{ChaHr1}, Kikyo and Pillay \cite{KiPi}, and Moosa and Scanlon \cite{MoosaScanlon}; for finite groups by the first author and Kowalski \cite{HoffKow2018}; and for finitely generated virtually free groups by Beyarslan and Kowalski \cite{BeyKow2018}. On the other hand, the case of groups acting on differential fields, by differential field automorphisms, has so far only been considered for finitely generated free groups $F_n$. The ordinary case with $G=\mathbb Z$ was investigated by Bustamante \cite{Busta}, the partial case with $G=\mathbb Z$ by the second author \cite{OmarDCFA}, and the general case $G=F_n$ by the second author and Moosa \cite{OmarRahim}.

\medskip

This paper marks the first step towards a systematic investigation of an arbitrary group $G$ acting on differential fields (by differential fields automorphisms) -- called here $G$-differential fields. This general subject has strong connections with differential Galois theory. Indeed, one is essentially studying differential fields which admit $G$ as a differential Galois group. For instance, when $G$ is say an algebraic group over the $\Delta$-constants of a ground differential field $(K,\Delta)$, then Kolchin's strongly normal extensions with Galois group $G$ are examples of $G$-differential fields. 

\medskip

Here, we focus on the case of finite groups. Let $G$ be a finite group and $m\in \NN$. One of the main purposes of this note is to show that the theory of differential fields of characteristic zero in $m$ commuting derivations equipped with a $G$-action by differential automorphisms has a model companion, which we denote by $\GDCF$. 

\medskip

We then observe several properties (of models) of this theory. More precisely, the theory $\GDCF$ is supersimple of $SU$-rank $\omega^m\cdot |G|$ with semi-quantifier elimination and elimination of imaginaries (after naming some parameters, see Section \ref{sec:EI} for details). Furthermore, if $(K,\Delta,\Sigma)\models \GDCF$ and $K^G$ and $K^\Delta$ denote the subfields of $\Sigma$-invariants and $\Delta$-constants of $K$, respectively, then

\begin{enumerate}
    \item [(i)] the differential fields $(K,\Delta)$ and $(K^G,\Delta)$ are bounded PAC-differential fields, and the latter has a supersimple theory of $SU$-rank $\omega^m$ 
    \item [(ii)] $(K^{\operatorname{alg}},\Delta)$ and $((K^G)^{\operatorname{alg}},\Delta)$ are models of $\DCFm$,
    \item [(iii)] the difference fields $(K,\Sigma)$ and $(K^{\Delta}, \Sigma)$ are models of $G\operatorname{-TCF}$ the theory of existentially closed (pure) fields with a $G$-action \cite{HoffKow2018}.
\end{enumerate}

\smallskip

Let us mention that one (quite) surprising result here is Proposition \ref{prop:equivalent}. This proposition shows that simplicity of $(K,\Delta,\Sigma)\models \GDCF$ is equivalent to simplicity of its reduct $(K,\Delta)$, and also to simplicity of the ``pure" field $K$ (even in the case when $G$ is infinite, assuming the model-companion exists).

\medskip

Our results here extend (in characteristic zero) the results in \cite{HoffKow2018} where the theory of existentially closed fields with finite group actions ($G\operatorname{-TCF}$) was explored. In a forthcoming paper we plan to investigate the case when $G$ is a finitely generated virtually free group, extending the results in \cite{BeyKow2018} (in characteristic zero) where the theory of fields with (finitely generated) virtually free group actions is explored, and also the results in \cite{OmarRahim} where the theory of (partial) differential fields with (finitely generated) free group actions was explored.

\medskip

The current paper is organized as follows. In Section \ref{preldiff} we recall the necessary differential algebraic background. In particular, we discuss in detail characteristic sets of prime differential ideals in differential polynomial rings, as they are the key ingredient to prove, in Section \ref{companion}, the existence of the model companion theory $\GDCF$. In Section \ref{theproperties}, we prove the aforementioned model-theoretic properties of this theory. Section \ref{sec:EI} is dedicated to the proof of elimination of imaginaries for $\GDCF$ with names for some parameters. In this section (see Section \ref{boundPACdiff}) we also discuss some generalities of theories of bounded PAC-differential fields. We address model-completeness and supersimplicity for such theories, which can be viewed as an extension (to the differential context) of the results on bounded PAC-fields in Section 4 of \cite{ChPi}. Section \ref{sec:final} ends the paper with several remarks about the situation when $G$ is not finite.

\medskip

\noindent {\bf Conventions.} Unless otherwise stated, rings are unital and commutative, and fields are of characteristic zero. Also, for us $\mathbb N=\{0,1,2,\dots\}$.

\section{Differential algebraic preliminaries}\label{preldiff}
In this section, we recall the necessary background on differential polynomials, differential division algorithm, and characteristic sets of prime differential ideals. We follow Kolchin's book \cite[Chapters I and IV]{kol1} where all proofs can be found. We fix $m\geqslant 0$ and $\D=\{\d_1,\dots,\d_m\}$ a distinguished set of derivations. All rings are commutative and unital. A differential ring (field) is a pair $(R,\D)$ where $R$ is a ring (field) and $\D$ is a collection of \emph{commuting} derivations on $R$. Namely, each $\d_i:R\to R$ is an additive map satisfying the Leibniz rule and $\d_i\d_j(a)=\d_j\d_i(a)$ for all $a\in R$ and $1\leqslant i,j\leqslant m$.

Let $(R,\D)$ be a differential ring. Let $x=(x_1,\dots,x_n)$ be a tuple of differential variables. The ring of differential polynomials in $x$ over $R$ is defined formally as the polynomial ring
 $$R\{x\}:=R[\d^\xi x_i: 1\leqslant i\leqslant n, \; \xi\in \mathbb N^m]$$
 where $\d^\xi x_i:=\delta_1^{\xi_1}\dots\d_m^{\xi_m}x_i$ are formal (algebraic) variables. Then $R\{x\}$ becomes a differential ring extension of $R$ by setting 
 $$\delta_j(\delta^\xi x_i)=\delta_1^{\xi_1}\cdots \d_j^{\xi_j+1}\cdots \delta_m^{\xi_m}x_i, \; \text{ for } j=1,\dots,m.$$

From now on, we work over a differential field $(K,\D)$ of characteristic zero. We fix the so-called canonical orderly ranking on the algebraic indeterminates $\d^\xi x_i$. Namely, for $1\leqslant i,j\leqslant n$ and $\xi=(\xi_1,\ldots,\xi_m),\eta=(\eta_1,\ldots,\eta_m)\in \mathbb N^m$, we set

$$\d^\xi x_i < \d^\eta x_j \;\iff \; (\sum \xi_k,i,\xi_1,\dots,\xi_m)<_{\text{lex}} (\sum \eta_k,j,\eta_1,\dots,\eta_m).$$
Let $f\in K\{x\}\setminus K$. The leader of $f$, denoted $u_f$, is the highest ranking algebraic indeterminate that appears in $f$ (according to the above ranking). The degree of $f$, denoted $d_f$, is the degree of $u_f$ in $f$. The rank of $f$, denoted $\operatorname{rk}(f)$, is the pair $(u_f,d_f)$. The set of ranks is ordered lexicographically. The separant of $f$, denoted $s_f$, is the formal partial derivative of $f$ with respect to $u_f$.
The initial of $f$, denoted $i_f$, is the leading coefficient of $f$ when viewed as a polynomial in $u_f$. Note that both $s_f$ and $i_f$ have lower rank than $f$. For a finite subset $\Lambda\subseteq K\{x\}\setminus K$, we set $H_\Lambda:=\prod_{f\in \Lambda}i_f\;s_f$. Also, for $A\subseteq K\{x\}$ we denote by $(A)$ and $[A]$ the ideal and differential ideal, respectively, generated by $A$ in $K\{x\}$. We recall that for any ideal $I$ and element $h$ of $K\{x\}$ the radical-division of $I$ by $h$ is defined as the ideal
$$I:h^{\infty}=\{f\in K\{x\}\;:\; h^r\, f\in I \text{ for some } r\geqslant 0 \}.$$
One readily checks that if $I$ is a differential ideal then $I:h^\infty$ is also a differential ideal. 

One says that $g\in K\{x\}$ is partially reduced with respect to $f\in K\{x\}\setminus K$ if no proper derivative of $u_f$ appears in $g$; if in addition the degree of $u_f$ in $g$ is strictly less than $d_f$ we say that $g$ is reduced with respect to $f$. Given a set $\Lambda\subseteq K\{x\}\setminus K$, we say $g$ is reduced with respect to $\Lambda$ if it is reduced with respect to each $f\in \Lambda$, and we say $\Lambda$ is autoreduced if for any two distinct elements of $\Lambda$ one is reduced with respect to the other. Autoreduced sets play an important role for elimination algorithms in differential polynomial rings. For instance, the \emph{differential division algorithm} states

\begin{fact}\label{div}
Let $\Lambda\subseteq K\{x\}\setminus K$ be autoreduced. For any $f\in K\{x\}$ there is $r\geqslant 0$ and $f_0\in K\{x\}$ reduced with respect to $\Lambda$ such that
$$H_\Lambda^r \; f\; \equiv \; f_0 \; \mod [\Lambda].  $$
The differential polynomial $f_0$, while not generally unique, is typically called the remainder of $f$ by $\Lambda$.
\end{fact}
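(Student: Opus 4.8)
The plan is to reduce $f$ to a reduced remainder in two successive phases: first a \emph{partial reduction} that removes all proper derivatives of leaders using the separants, and then an \emph{algebraic reduction} that lowers the degrees in the leaders using the initials. Everything rests on one computation: if $g\in K\{x\}\setminus K$ has leader $u_g$ and $\theta=\d_1^{\eta_1}\cdots\d_m^{\eta_m}$ with $\eta_1+\cdots+\eta_m\geqslant 1$, then
$$\theta g \;=\; s_g\,\theta u_g \;+\; T_\theta,$$
where $T_\theta$ is free of $\theta u_g$ and of every variable of higher rank. Indeed, applying $\d_j$ to $g$ and using the Leibniz rule, the unique term of top rank is $\tfrac{\partial g}{\partial u_g}\,\d_j u_g=s_g\,\d_j u_g$, since the orderly ranking is preserved under differentiation and $u_g$ is the leader; the general case follows by induction on $\eta_1+\cdots+\eta_m$. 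As $g\in\Lambda$ forces $\theta g\in[\Lambda]$, this yields the rewriting rule $s_g\,\theta u_g\equiv -T_\theta \pmod{[\Lambda]}$, trading a proper derivative of a leader for an expression of strictly lower rank.

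For Phase~1, at each step I would take the highest-ranking proper derivative $v=\theta u_g$ of a leader occurring in the current polynomial, write that polynomial as $\sum_j c_j v^j$ with each $c_j$ free of $v$, multiply by an appropriate power of $s_g$, and apply the rewriting rule. The result is congruent modulo $[\Lambda]$, is free of $v$, and introduces only variables of rank strictly below $v$. Thus the highest proper derivative of a leader that occurs strictly decreases in the (well-ordered) ranking, so after finitely many steps I reach a partially reduced $f^{\ast}$ with $\big(\prod_{g}s_g^{a_g}\big)f\equiv f^{\ast}\pmod{[\Lambda]}$.

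For Phase~2, I would eliminate excess degree in $f^{\ast}$ one leader at a time, in \emph{decreasing} order of rank. For the highest leader $u_g$ occurring to degree $\geqslant d_g$, ordinary pseudo-division by $g=i_g u_g^{d_g}+(\text{lower degree in }u_g)$ gives $i_g^{N} f^{\ast}\equiv R\pmod{[\Lambda]}$ with $\deg_{u_g}R<d_g$. Two observations keep this controlled. First, because $\Lambda$ is autoreduced, every $g\in\Lambda$ contains no proper derivative of any leader of $\Lambda$, and consequently $i_g$ and $s_g$ are themselves reduced with respect to $\Lambda$; hence multiplying by $i_g$ and subtracting multiples of $g$ never reintroduces a proper derivative of a leader, so $f^{\ast}$ stays partially reduced. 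Second, $g$ involves no variable of rank exceeding $u_g$, so the step leaves the degrees in all higher leaders untouched (they were already below their bounds by the choice of $u_g$); reading the degree sequence of the leaders in decreasing rank and comparing it lexicographically, it strictly decreases. This terminates in a fully reduced $f_0$ with $\big(\prod_g i_g^{b_g}\big)f^{\ast}\equiv f_0\pmod{[\Lambda]}$.

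Combining the two phases gives $\big(\prod_{g\in\Lambda} i_g^{b_g}s_g^{a_g}\big)\,f\equiv f_0\pmod{[\Lambda]}$ with $f_0$ reduced; since each $i_g$ and $s_g$ divides $H_\Lambda$, the multiplier divides $H_\Lambda^{r}$ for $r=\max_g(a_g,b_g)$, which is the displayed form of Fact~\ref{div}. The main obstacle is not any single computation but the \emph{termination and non-interference} bookkeeping: one must pick the right well-founded complexity measure for each phase — the highest occurring proper derivative of a leader in Phase~1, and the lexicographic degree sequence of the leaders in Phase~2 — and verify that each elimination step strictly lowers it while never undoing earlier progress. That the elements of an autoreduced set are partially reduced, together with the fact that $g$ introduces only variables of rank $\leqslant u_g$, is exactly what makes this go through.
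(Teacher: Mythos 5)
Your two-phase reduction (partial reduction via separants, then pseudo-division via initials) is exactly the classical Ritt--Kolchin argument that the paper implicitly relies on by citing Kolchin, and through the end of Phase~2 it is correct: the termination measures (the highest occurring proper derivative of a leader in Phase~1, the lexicographic degree sequence of the leaders in decreasing rank in Phase~2) are the right ones, and the observation that the elements of an autoreduced set, hence their initials and separants, are partially reduced is precisely what guarantees non-interference. What this argument actually delivers is Kolchin's form of the statement: $\bigl(\prod_{g\in\Lambda} i_g^{b_g}s_g^{a_g}\bigr)\,f\equiv f_0 \pmod{[\Lambda]}$ with $f_0$ reduced and the exponents chosen independently for each factor.

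The last sentence, however, is a genuine gap. From $Mf\equiv f_0$ with $M\mid H_\Lambda^{r}$ you can only conclude $H_\Lambda^{r}f\equiv (H_\Lambda^{r}/M)\,f_0$, and the cofactor $H_\Lambda^{r}/M$ is a product of initials and separants, hence only \emph{partially} reduced: multiplying $f_0$ by it can push the degree in a leader back above its bound, so the right-hand side need not be reduced. Moreover this cannot be patched, because the single-power form of the statement is actually false as written. Take $m=0$ and two variables $x_1<x_2$, let $\Lambda=\{g\}$ with $g=x_1x_2^3+x_2^2$, so that $u_g=x_2$, $d_g=3$, $i_g=x_1$, $s_g=3x_1x_2^2+2x_2$ and $H_\Lambda=x_1(3x_1x_2^2+2x_2)$; here $[\Lambda]=(\Lambda)=(g)$. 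For $f=x_2^3$ Kolchin's form works with $b_g=1$, $a_g=0$, since $i_gf=g-x_2^2\equiv -x_2^2$. But a direct computation gives $H_\Lambda f-f=x_2(3x_1x_2-1)\,g$, so $H_\Lambda^{r}f\equiv x_2^3\pmod{(g)}$ for every $r\geqslant 0$; and if $x_2^3-f_0=q\,g$ with $\deg_{x_2}f_0<3$, then comparing leading coefficients in $x_2$ (that of $qg$ is $x_1$ times that of $q$) forces $\deg_{x_2}q=0$ and $x_1q=1$, which is impossible. So no power of $H_\Lambda$ works for this $f$. The statement you should record, and the one actually needed later (e.g.\ in the proof of Remark \ref{rem:axioms}), is the product form $\prod_{g}i_g^{b_g}s_g^{a_g}\,f\equiv f_0$: such a multiplier divides a power of $H_\Lambda$, so it is nonzero on $\V_K^\D(\Lambda)\setminus\V_K^\D(H_\Lambda)$ and defines the same saturation ideal $[\Lambda]:H_\Lambda^\infty$, and every subsequent use of the Fact goes through unchanged.
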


Autoreduced sets are always finite and their elements have distinct leaders. We always write them in increasing order by rank; that is, if $\{f_1,\dots, f_s\}$ is autoreduced then $\operatorname{rk}(f_i)<\operatorname{rk}(f_j)$ for $i<j$. The canonical orderly ranking on autoreduced sets is defined as follows: $\{g_1,\dots,g_r\}<\{f_1,\dots,f_s\}$ means that either there is $i\leqslant r,s$ such that $\operatorname{rk}(g_j)=\operatorname{rk}(f_j)$ for $j<i$ and $\operatorname{rk}(g_i)<\operatorname{rk}(f_i)$, or $r>s$ and $\operatorname{rk}(g_j)=\operatorname{rk}(f_j)$ for $j\leqslant s$.

While it is not generally the case that prime differential ideals of $K\{x\}$ are finitely generated as differential ideals (though they are finitely generated as radical differential ideals), something close is true: they are determined by certain autoreduced subsets -- the so-called characteristic sets. More precisely, if $P\subseteq K\{x\}$ is a prime differential ideal then a \emph{characteristic set} $\Lambda$ of $P$ is a minimal autoreduced subset of $P$ with respect to the ranking defined above. These minimal sets always exist, and determine $P$ in the sense that
\begin{equation}\label{equchar}
P=[\Lambda]:H_\Lambda^\infty.
\end{equation}

It is easy to see that if $\Lambda$ is a characteristic set of a prime differential ideal and $f\in K\{x\}$, then the remainder $f_0$ of $f$ by $\Lambda$ obtained in the differential division algorithm (see Fact~\ref{div}) is in fact unique. Furthermore, 

\begin{fact}\label{reduced}
Suppose $\Lambda$ is a characteristic set of a prime differential ideal $P\subseteq K\{x\}$ and $f\in K\{x\}$. Then, $f\in P$ if and only if the remainder $f_0$ of $f$ by $\Lambda$ is zero. In particular, if $f$ is nonzero and reduced with respect to $\Lambda$, then $f\notin P$.
\end{fact}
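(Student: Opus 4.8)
The plan is to reduce everything to a single key lemma: \emph{if $g\in P$ is reduced with respect to $\Lambda$ then $g=0$.} Granting this, both directions of the equivalence follow cleanly from the differential division algorithm (Fact~\ref{div}) and the identity \eqref{equchar}. Indeed, Fact~\ref{div} provides $r\geqslant 0$ with $H_\Lambda^r f \equiv f_0 \pmod{[\Lambda]}$ and $f_0$ reduced with respect to $\Lambda$. For the backward direction, if $f_0=0$ then $H_\Lambda^r f\in[\Lambda]$, so $f\in [\Lambda]:H_\Lambda^\infty = P$ by \eqref{equchar}. For the forward direction, if $f\in P$ then, since $[\Lambda]\subseteq P$ and $H_\Lambda^r f - f_0\in[\Lambda]$, we get $f_0\in P$; as $f_0$ is reduced with respect to $\Lambda$, the key lemma forces $f_0=0$. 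The final assertion is the special case where $f$ is already reduced (so $f$ is its own remainder): a nonzero such $f$ cannot lie in $P$, since otherwise the key lemma would give $f=0$.

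It remains to prove the key lemma, and this is where the minimality of $\Lambda$ enters. Suppose toward a contradiction that $g\in P$ is nonzero and reduced with respect to $\Lambda=\{f_1,\dots,f_s\}$ (listed in increasing rank). First, $g\notin K$: a nonzero element of $K$ is a unit, and $P$, being prime, is proper. Hence $g$ has a well-defined leader and rank. Let $j$ be the largest index with $\operatorname{rk}(f_j)<\operatorname{rk}(g)$ (set $j=0$ if $\operatorname{rk}(g)<\operatorname{rk}(f_1)$), and consider $\Lambda':=\{f_1,\dots,f_j,g\}$. The claim is that $\Lambda'$ is an autoreduced subset of $P$ that is strictly smaller than $\Lambda$ in the ranking on autoreduced sets, contradicting the fact that $\Lambda$ is a \emph{minimal} such set.

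The verification that $\Lambda'$ is autoreduced is the technical heart of the argument, and is the main obstacle. Each $f_i$ with $i\leqslant j$ satisfies $\operatorname{rk}(f_i)<\operatorname{rk}(g)$; moreover $u_{f_i}\neq u_g$, since $u_{f_i}=u_g$ together with $g$ being reduced with respect to $f_i$ would force $d_g<d_{f_i}$, i.e. $\operatorname{rk}(g)<\operatorname{rk}(f_i)$, a contradiction. Thus $u_{f_i}<u_g$ strictly, so neither $u_g$ nor any of its proper derivatives occurs in $f_i$, whence $f_i$ is reduced with respect to $g$; conversely $g$ is reduced with respect to each $f_i$ by hypothesis. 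Finally one compares $\Lambda'$ with $\Lambda$: their first $j$ ranks agree, and either $j<s$, in which case $\operatorname{rk}(g)<\operatorname{rk}(f_{j+1})$ by the maximality of $j$ (equality is impossible, again because $g$ is reduced with respect to $f_{j+1}$), or $j=s$, in which case $\Lambda'$ is a strictly longer sequence agreeing with $\Lambda$ on all of $\{f_1,\dots,f_s\}$. In either case $\Lambda'<\Lambda$, and since $\Lambda'\subseteq P$ this contradicts the minimality of $\Lambda$. The difficulty lies entirely in this rank bookkeeping -- ensuring the inserted element $g$ yields a genuinely smaller autoreduced set -- whereas the two displayed directions are formal consequences of Fact~\ref{div} and \eqref{equchar}.
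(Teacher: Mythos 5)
Your proof is correct. Note that the paper itself offers no proof of this statement: it is quoted as a Fact with all proofs deferred to Kolchin's book (Chapters I and IV), so there is nothing in the text to compare against. Your argument is the classical one found there: the two displayed directions are formal consequences of Fact~\ref{div} and the identity $P=[\Lambda]:H_\Lambda^\infty$, and everything rests on the lemma that a characteristic set admits no nonzero reduced element of $P$, proved by inserting such an element $g$ into $\Lambda$ to produce a strictly lower autoreduced subset of $P$. Your rank bookkeeping is sound on all points that matter: $g\notin K$ because $P$ is proper; $u_{f_i}\neq u_g$ for $\operatorname{rk}(f_i)<\operatorname{rk}(g)$, hence $u_{f_i}<u_g$, so no derivative of $u_g$ (all of which rank at least $u_g$ in the orderly ranking) occurs in $f_i$ and $f_i$ is reduced with respect to $g$; and the comparison of $\Lambda'$ with $\Lambda$ covers both clauses of the ordering on autoreduced sets, including the case $j=s$ where $\Lambda'$ is longer. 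The empty characteristic set (when $P=(0)$) is also handled correctly by your $j=0$ convention.
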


We now wish to recall Rosenfeld's lemma which yields a (first-order) characterisation of those finite subsets of $K\{x\}$ that are characteristic sets of prime differential ideals. An autoreduced set $\Lambda=\{f_1,\dots,f_s\}$ of $K\{x\}$ is said to be \emph{coherent} if the following is satisfied: for $i\neq j$, suppose there are $\xi,\eta\in \mathbb N^m$ such that $\d^\xi u_{f_i}=\d^\eta u_{f_j}=u$ where $u$ is least such in the ranking, then
$$s_{f_j}\d^\xi f_i \; - \; s_{f_i}\d^\eta f_j \in \; (\Lambda)_u:H_\Lambda^\infty.$$
Here $(\Lambda)_u$ denotes the ideal of $K\{x\}$ generated by the $f_i$'s and their derivatives whose leaders are of rank strictly lower than $u$. 

\medskip 

Coherency helps us reduce differential consistency problems to algebraic consistency problems (in finitely many variables); that is

\begin{fact}\label{coherent}
Let $\Lambda$ be coherent and $g\in K\{x\}$ partially reduced with respect to $\Lambda$. Then, 
$$g\; \in\;  [\Lambda]:H_\Lambda^\infty \; \iff \; g\; \in \;  (\Lambda):H_\Lambda^\infty.$$
\end{fact}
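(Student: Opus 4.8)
The plan is to establish the nontrivial implication $\Rightarrow$; the converse is immediate, since $(\Lambda)\subseteq[\Lambda]$ forces $(\Lambda):H_\Lambda^\infty\subseteq[\Lambda]:H_\Lambda^\infty$, with no use of partial reducedness. So assume $g$ is partially reduced and $H_\Lambda^{a}g\in[\Lambda]$ for some $a$; the goal is to produce $b$ with $H_\Lambda^{b}g\in(\Lambda)$. Since the differential ideal $[\Lambda]$ is, as an ordinary ideal, generated by $\{\d^\xi f_i:1\leqslant i\leqslant s,\ \xi\in\NN^m\}$, I can write $h:=H_\Lambda^{a}g=\sum_{(i,\xi)}c_{i,\xi}\,\d^\xi f_i$ as a finite sum. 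A preliminary observation is that $h$ is itself partially reduced with respect to $\Lambda$: each $i_{f_j}$ and $s_{f_j}$ has lower rank than $f_j$ and, by autoreducedness, involves no proper derivative of any leader, so $H_\Lambda$ is partially reduced, and hence so is the product $h=H_\Lambda^{a}g$. In particular no proper derivative of any $u_{f_i}$ occurs in $h$.

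The core is a descending induction on the highest proper-derivative leader occurring in such a representation of $h$. Recall that for $\xi\neq0$ one has $\d^\xi f_i=s_{f_i}\,\d^\xi u_{f_i}+T_{i,\xi}$ with $T_{i,\xi}$ of strictly lower leader; thus each derivative is \emph{linear} in its own leader, with coefficient the separant. Let $v$ be the maximal leader $\d^\xi u_{f_i}$ (with $\xi\neq0$) appearing in the sum. If none appears, then $h\in(\Lambda)$ and we are done; otherwise I first collapse all terms of leader exactly $v$. This is where coherency is used: whenever $\d^\xi u_{f_i}=\d^\eta u_{f_j}=v$, the relation $s_{f_j}\d^\xi f_i-s_{f_i}\d^\eta f_j\in(\Lambda)_v:H_\Lambda^\infty$ says that, modulo a power of $H_\Lambda$, these two derivatives are proportional (via separants) up to terms of leader below $v$. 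Consequently, after multiplying by a suitable power of $H_\Lambda$, the entire leader-$v$ part of $h$ can be rewritten as $C\cdot\d^{\xi_0}f_{i_0}$ for a single representative $\d^{\xi_0}f_{i_0}$, modulo the ideal generated by derivatives of leader strictly below $v$.

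To lower the induction, I then treat the resulting identity as a polynomial in the single variable $v$. Since $v$ occurs in $h$ with degree $0$, in $\d^{\xi_0}f_{i_0}$ only linearly with coefficient $s_{f_{i_0}}$, and not at all in the lower-leader generators, comparison of the top power of $v$ yields that $s_{f_{i_0}}$ times the leading $v$-coefficient of $C$ lies in $(\Lambda)_v$. As $s_{f_{i_0}}$ divides $H_\Lambda$, this places that leading coefficient in $(\Lambda)_v:H_\Lambda^\infty$, which lets me strip the top power of $v$ off $C$ at the cost of another power of $H_\Lambda$. Iterating reduces $\deg_v C$ to $0$ and thereby eliminates $v$ entirely, strictly lowering the maximal proper-derivative leader. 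As only finitely many leaders are involved, the induction terminates with $H_\Lambda^{b}h\in(\Lambda)$, i.e. $g\in(\Lambda):H_\Lambda^\infty$.

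The main obstacle is precisely this elimination step, where three things must be tracked at once: the accumulating powers of $H_\Lambda$; the fact that the coefficients $c_{i,\xi}$ (and hence $C$) may themselves involve $v$, which is what forces the comparison to be made coefficient-by-coefficient as a polynomial in $v$ rather than by a naive cancellation; and the correct invocation of coherency to merge several leader-$v$ contributions. A cleaner conceptual packaging of the same content, which I would use to organise the write-up, is to let $R_0\subseteq K\{x\}$ be the subring generated by those variables that are not proper derivatives of any leader. Then $\Lambda\subseteq R_0$, $H_\Lambda\in R_0$, $g\in R_0$, and the remaining variables are algebraically independent over $R_0$, so $(\Lambda)\cap R_0$ equals the ideal generated by $\Lambda$ inside $R_0$. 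The system $\{\d^\xi f_i\}$ triangularly solves each proper-derivative variable in the localization $K\{x\}[H_\Lambda^{-1}]$ as a function of the variables of $R_0$, coherency being exactly the consistency requirement at variables reachable as derivatives of two distinct leaders; this yields $[\Lambda]\cap R_0\subseteq(\Lambda):H_\Lambda^\infty$, which is the desired implication.
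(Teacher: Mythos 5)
The paper does not actually prove this statement: it is imported as a Fact from Kolchin's book (Chapters I and IV), so there is no in-paper argument to compare against. Your proposal is, in outline, the classical Rosenfeld--Kolchin proof: take a representation $H_\Lambda^a g=\sum c_{i,\xi}\,\d^\xi f_i$, observe that $H_\Lambda^a g$ is partially reduced (correct, since by autoreducedness no $f_j$, hence no $i_{f_j}$ or $s_{f_j}$, contains a proper derivative of any leader), locate the highest proper-derivative leader $v$, merge the leader-$v$ terms via coherence, and eliminate $v$ by comparing coefficients of powers of $v$, using the linearity $\d^\xi f_i=s_{f_i}\,\d^\xi u_{f_i}+(\text{lower})$ for $\xi\neq 0$. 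The coefficient-comparison step is sound as you describe it: the $v^{e+1}$-coefficient of $C\,(s_{f_{i_0}}v+T)$ is $C_e\, s_{f_{i_0}}$, while the same coefficient of $H_\Lambda^b h$ minus the lower-leader part lies in $(\Lambda)_v$, whence $C_e\in(\Lambda)_v:H_\Lambda^\infty$ and the top power of $v$ can be stripped at the cost of a further power of $H_\Lambda$.

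There is, however, one genuine gap. The definition of coherence only provides the relation $s_{f_j}\d^\xi f_i-s_{f_i}\d^\eta f_j\in(\Lambda)_u:H_\Lambda^\infty$ at the \emph{least} common derivative $u$ of $u_{f_i}$ and $u_{f_j}$, whereas your merging step invokes it at an arbitrary common derivative $v$. You need the standard propagation lemma: if the relation holds at $u$, it holds at every $v=\d^\tau u$. This is true but requires an argument --- apply $\d^\tau$ to the relation, check that the cross terms $\d^{\tau'}(s_{f_j})\,\d^{\tau''+\xi}f_i$ with $\tau''\neq\tau$ have leaders strictly below $v$ and hence lie in $(\Lambda)_v$, and check that differentiation maps $(\Lambda)_u:H_\Lambda^\infty$ into $(\Lambda)_{\d u}:H_\Lambda^\infty$ (one extra power of $H_\Lambda$ absorbs the $\d(H_\Lambda)$ term). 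Without this lemma the merging step is unjustified as written. Two smaller points: termination of your descending induction should be justified by the ranking being a well-order, since the rewriting can introduce new, lower leaders, so ``only finitely many leaders are involved'' is not quite the right reason; and the closing ``triangular solving'' picture is a helpful heuristic but does not by itself deliver $[\Lambda]\cap R_0\subseteq(\Lambda):H_\Lambda^\infty$ --- the actual content is still the inductive elimination you carried out, so the repackaging should not replace it in the write-up.
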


With a bit a work, the division algorithm together with the above fact yield:

\begin{fact}\label{rosenfeld}{\bf Rosenfeld's Lemma.}
Let $\Lambda$ be a finite subset of $K\{x\}$. The following are equivalent
\begin{enumerate}
\item $\Lambda$ is a characteristic set of a prime differential ideal (which by \eqref{equchar} above must equal $[\Lambda]:H_\Lambda^\infty$)
\item $\Lambda$ is coherent and the ideal $(\Lambda):H_\Lambda^\infty$ is prime and does not contain nonzero elements that are reduced with respect to $\Lambda$.
\end{enumerate}
\end{fact}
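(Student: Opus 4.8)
The plan is to prove the two implications separately, in each case using Fact~\ref{coherent} as the bridge that converts membership in the differential ideal $[\Lambda]:H_\Lambda^\infty$ into membership in the algebraic ideal $(\Lambda):H_\Lambda^\infty$ for \emph{partially reduced} polynomials. Throughout I write $P:=[\Lambda]:H_\Lambda^\infty$ and $I:=(\Lambda):H_\Lambda^\infty$, and I let $R_0\subseteq K\{x\}$ be the polynomial subring generated by those indeterminates $\d^\xi x_i$ that are not proper derivatives of any leader of $\Lambda$; the elements of $R_0$ are exactly the polynomials partially reduced with respect to $\Lambda$, and note that $\Lambda\subseteq R_0$ and $H_\Lambda\in R_0$.

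For $(2)\Rightarrow(1)$, assume $\Lambda$ is coherent, $I$ is prime, and $I$ contains no nonzero reduced polynomial. First I would check that $P$ is a proper prime differential ideal. It is differential, and proper, for if $1\in P$ then, since $1\in R_0$, Fact~\ref{coherent} gives $1\in I$, contradicting that $1$ is a nonzero reduced element. For primality, given $fg\in P$ I apply the division algorithm (Fact~\ref{div}) to obtain reduced remainders with $H_\Lambda^r f\equiv f_0$ and $H_\Lambda^s g\equiv g_0 \bmod [\Lambda]$; then $f_0 g_0\in P$ is partially reduced, so Fact~\ref{coherent} places it in $I$, whence $f_0\in I$ or $g_0\in I$, and being reduced the relevant factor vanishes, forcing $f\in P$ or $g\in P$. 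Next I would show $\Lambda$ is a characteristic set of $P$: clearly $\Lambda\subseteq P$ and $\Lambda$ is autoreduced, and no nonzero $h\in P$ is reduced with respect to $\Lambda$ (such $h$ is partially reduced, hence $h\in I$ by Fact~\ref{coherent}, contradicting the hypothesis on $I$). A standard comparison then certifies minimality: if $\Lambda'\subseteq P$ were autoreduced with $\Lambda'<\Lambda$, the first index where the ranks drop produces an element of $\Lambda'\subseteq P$ reduced with respect to all of $\Lambda$, a contradiction.

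For $(1)\Rightarrow(2)$, assume $\Lambda$ is a characteristic set of the prime differential ideal $P=[\Lambda]:H_\Lambda^\infty$. Coherency I would verify directly: for $f_i,f_j\in\Lambda$ with $\d^\xi u_{f_i}=\d^\eta u_{f_j}=u$ least, the leading $u$-terms of $s_{f_j}\d^\xi f_i$ and $s_{f_i}\d^\eta f_j$ cancel, so $w:=s_{f_j}\d^\xi f_i-s_{f_i}\d^\eta f_j\in P$ involves only indeterminates of rank $<u$; reducing $w$ (which only calls on reducers of rank $<u$) and using Fact~\ref{reduced} to see its remainder is $0$ places $w$ in $(\Lambda)_u:H_\Lambda^\infty$. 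That $I$ has no nonzero reduced element is immediate, since a nonzero reduced $h\in I\subseteq P$ would be a nonzero reduced element of $P$, contradicting the last clause of Fact~\ref{reduced}. The substantive point is the primality of $I$. Here I would argue that $I=I_0\cdot K\{x\}$, where $I_0:=(\Lambda):H_\Lambda^\infty$ is computed inside $R_0$, using that $\Lambda,H_\Lambda\in R_0$ and that $K\{x\}$ is a polynomial extension of $R_0$; and that $I_0=P\cap R_0$, where $\subseteq$ is clear and $\supseteq$ is exactly Fact~\ref{coherent} (a partially reduced element of $P$ lies in $I$, hence in $I_0$ after contracting along the polynomial extension). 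Since $P$ is prime, $P\cap R_0=I_0$ is prime, and therefore so is its extension $I$.

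The main obstacle is the primality of the algebraic ideal $I=(\Lambda):H_\Lambda^\infty$ in the $(1)\Rightarrow(2)$ direction: unlike the differential setting there is no division algorithm internal to the plain polynomial ideal, so one cannot reduce directly. The device that resolves it is to cut down to the subring $R_0$ of partially reduced polynomials, where Fact~\ref{coherent} identifies $I\cap R_0$ with the contraction $P\cap R_0$ of a prime, after which primality transfers up the (faithfully flat) polynomial extension $R_0\hookrightarrow K\{x\}$. The comparably delicate point in $(2)\Rightarrow(1)$ is the minimality argument certifying that $\Lambda$ is genuinely a \emph{characteristic} set rather than merely some autoreduced subset of $P$.
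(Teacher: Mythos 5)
The paper does not actually prove this statement: it is presented as a quoted Fact, with all proofs deferred to Kolchin's book, the only indication of strategy being the remark that ``the division algorithm together with [Fact~\ref{coherent}] yield'' the lemma. Your argument is a correct reconstruction along exactly those lines: Fact~\ref{coherent} is the bridge between $[\Lambda]:H_\Lambda^\infty$ and $(\Lambda):H_\Lambda^\infty$ in both directions, and your device of contracting to the subring $R_0$ of partially reduced polynomials -- identifying $(\Lambda):H_\Lambda^\infty$ with the extension of the prime $P\cap R_0$ along the polynomial extension $R_0\hookrightarrow K\{x\}$ -- is the standard way to handle the one genuinely non-formal point, the primality of the algebraic ideal. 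Two places where you silently strengthen the quoted black boxes deserve a remark. First, in the coherency verification you need not merely Fact~\ref{div} but the refinement that reducing a polynomial all of whose indeterminates have rank $<u$ calls only on prolongations $\d^\theta f_k$ whose leaders have rank $<u$, so that the congruence lands in $(\Lambda)_u$ rather than merely in $[\Lambda]$; you do flag this, but it requires opening up the division algorithm rather than citing its statement. Second, your minimality comparison only treats the case of a rank drop at some common index; the definition of the ordering on autoreduced sets has a second clause ($|\Lambda'|>|\Lambda|$ with all ranks agreeing up to $|\Lambda|$), in which case the extra element of $\Lambda'$ is the nonzero element of $P$ reduced with respect to $\Lambda$ that yields the contradiction. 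Both are routine to repair and do not affect the correctness of the proof.
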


We conclude this section by pointing out (sketching rather) how Rosenfeld's Lemma yields a ``first-order axiomatisation'' of characteristic sets of prime differential ideals. More precisely, 

\begin{corollary}\label{rosenchar}
Let $\LL(u,x)$ be a finite subset of $\mathbb Z[u]\{x\}$ where $u$ is a tuple of variables (thought as parameters). Then, there is a first-order formula $\phi(y)$ in the language of differential rings $\L_{\D}$ such for any differential field $(K,\D)$ of characteristic zero we have that $K\models \phi(a)$ if and only if $\LL(a,x)$ is a characteristic set of prime differential ideal of $K\{x\}$. 
\end{corollary}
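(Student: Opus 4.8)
The plan is to read off, from Rosenfeld's Lemma (Fact~\ref{rosenfeld}), the conditions characterising characteristic sets of prime differential ideals and express each as an $\L_\D$-formula in the parameter $a$. Writing $\LL(u,x)=\{f_1,\dots,f_s\}$, the target statement ``$\LL(a,x)$ is a characteristic set of a prime differential ideal'' is equivalent to the conjunction of: (i) $\LL(a,x)$ is autoreduced; (ii) $\LL(a,x)$ is coherent; (iii) the ideal $(\LL(a,x)):H_{\LL(a,x)}^\infty$ is prime; and (iv) this ideal contains no nonzero element reduced with respect to $\LL(a,x)$. I would show that each of (i)--(iv) is first-order in $a$ and take $\phi(y)$ to be their conjunction.

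First I would fix the combinatorial data. The leader, rank, initial $i_{f_i}$ and separant $s_{f_i}$ of each $f_i\in\mathbb Z[u]\{x\}$ are determined syntactically, and each of $i_{f_i},s_{f_i}$ is a differential polynomial in $x$ whose coefficients are polynomials in $u$; hence ``$i_{f_i}(a,x)\neq 0$'' and ``$s_{f_i}(a,x)\neq 0$'' are quantifier-free conditions on $a$, namely finite disjunctions $\bigvee_\beta p_\beta(a)\neq 0$. On the quantifier-free locus where all these hold, the leaders and ranks of the $f_i(a,x)$ coincide with the generic ones, so the finite combinatorial data used below --- which pairs $(i,j)$ admit a least common derivative $\d^\xi u_{f_i}=\d^\eta u_{f_j}$, the polynomials $\d^\xi f_i$, the product $H_{\LL}$, and which algebraic indeterminates occur --- is constant and matches that of $\LL(u,x)$; in particular (i) becomes a fixed Boolean combination of coefficient (non)vanishing in $a$. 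The finitely many complementary strata (where some leading coefficient specialises to $0$) are handled identically after recomputing this data, and a finite disjunction over strata yields the final formula; I will describe the argument on the generic locus only.

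Rosenfeld's Lemma already phrases (ii)--(iv) in terms of the \emph{ordinary} (non-differential) ideals $(\LL):H_{\LL}^\infty$ and $(\LL)_u:H_{\LL}^\infty$, so no differential quantification remains. I would next confine these to finitely many variables: let $V$ be the finite set of algebraic indeterminates occurring in the $f_i$ and in the derivatives $\d^\xi f_i$ entering the coherence conditions, and set $R=K[V]$. Since $K\{x\}$ is a polynomial ring over $R$ and $H_{\LL}\in R$, each ideal above is the extension of its contraction to $R$, so primality, saturation and membership may all be tested in $R$. Every one of the conditions is now a statement about polynomial ideals in $R$ with generators of degree bounded independently of $a$ (specialising $u\mapsto a$ can only lower degrees), so I would invoke the classical definability results: the coherence memberships in (ii), and the ideal memberships generally, reduce via an effective Hermann--Seidenberg bound $N$ on the saturation exponent to single memberships $h^N g\in I$, and membership with bounded-degree cofactors is the solvability of a linear system in $a$ --- a determinantal, hence first-order, condition; (iii) is first-order by the classical definability of primality over an arbitrary base field for ideals with bounded-degree generators (first extracting bounded-degree generators of the saturation); and (iv) reduces, via a uniform degree bound, to the linear-algebra check that the truncated ideal meets the finite-dimensional $K$-space of reduced polynomials of bounded degree only in $0$. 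Conjoining the formulas for (i)--(iv) and disjoining over the finitely many strata gives $\phi(y)$.

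The main obstacle is \emph{uniformity}: all the bounds above (the saturation exponent $N$, the cofactor degrees, and the degree bound in (iv)) must depend only on the fixed family $\LL(u,x)$, not on $a$ or on $K$, so that a single formula works for every characteristic-zero differential field; this is exactly what the effective bounds of Hermann--Seidenberg type, together with the classical definability of primality, provide. A second delicate point is that the primality in (iii) is genuinely required over $K$ itself rather than over $K^{\alg}$, so it is the definability of primality --- not merely of geometric irreducibility --- that is needed. It is Rosenfeld's Lemma, underpinned by Fact~\ref{coherent}, that replaces differential ideals by ordinary ones and thereby confines every computation to the finite variable set $V$.
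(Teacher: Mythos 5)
Your proposal is correct and follows essentially the same route as the paper: reduce via Rosenfeld's Lemma (Fact~\ref{rosenfeld}) to coherence, primality of $(\LL):H_\LL^\infty$, and absence of nonzero reduced elements, confine everything to the finitely many algebraic indeterminates actually occurring, and then appeal to uniform effective bounds (the paper cites \cite{vddsch} and Fact~\ref{bounds} where you invoke Hermann--Seidenberg) to make each condition first-order, with primality requiring the degree control on generators of the saturation. Your explicit stratification over the loci where initials or separants specialise to zero is a detail the paper's sketch leaves implicit (deferring to \cite{Tre2005}), but it is the same argument.
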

A detailed proof appears in \cite[Section 4]{Tre2005}. We just wish to give a sketch; mainly as the following fact (which is needed here) will also be used in the next section.

\begin{fact}\cite[Lemma 3.2 and Theorem 4.2(iii)]{Tre2005}\label{bounds}
Let $n, d\in\mathbb N$ and let $y=(y_1,\dots,y_n)$ be a tuple of algebraic variables. Then, there is $B=B(n,d)\in \mathbb N$ such that if $h\in K[y]$ is of total degree at most $d$ and $I$ is an ideal of $K[y]$ generated polynomials of total degree at most $d$, then the radical-division ideal $I:s^\infty \subseteq K[y]$ is generated by at most $B$-many polynomials of total degree at most $B$.
\end{fact}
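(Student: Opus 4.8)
The plan is to reduce the computation of the saturation $I:h^\infty$ to a single elimination step, and then to invoke a field-independent degree bound for elimination ideals (equivalently, for Gr\"obner bases). I would first record the classical Rabinowitsch description of the saturation: introducing a fresh algebraic variable $t$, one has
$$I:h^\infty \;=\; \big(\,I\,K[y,t]+(t\,h-1)\,\big)\cap K[y].$$
(One inclusion is immediate from $h^r f=t^r(h^rf)+f\,(1-(th)^r)$ together with $(1-(th)^r)\in(th-1)$; the reverse follows by localising at $h$, i.e. substituting $t=h^{-1}$.) The crucial point is that the ideal on the right-hand side is generated, in the polynomial ring $K[y,t]$ in $n+1$ variables, by the given generators of $I$ (of total degree $\leqslant d$) together with the single polynomial $th-1$ (of total degree $\leqslant d+1$). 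Thus the saturation by $h$ has been replaced by a contraction — the elimination of $t$ — of an ideal generated in degree $\leqslant d+1$.

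Next I would bound the degrees of a generating set of this elimination ideal. Fix any monomial order on $K[y,t]$ that eliminates $t$ (for instance a block order with the $t$-block largest). If $G$ is the reduced Gr\"obner basis of $I\,K[y,t]+(th-1)$ for this order, then by the elimination property $G\cap K[y]$ is a Gr\"obner basis, hence a generating set, of the contraction $(I\,K[y,t]+(th-1))\cap K[y]=I:h^\infty$. It therefore suffices to bound the degrees of the elements of $G$, and this is exactly what the effective theory of Gr\"obner bases provides: for an ideal of $K[y,t]$ generated in degree $\leqslant d+1$ in $n+1$ variables, every element of the reduced Gr\"obner basis (for any fixed order) has total degree at most some explicit $B_0=B_0(n+1,d+1)$ of doubly-exponential shape, e.g. of Dub\'e's form $2\big((d+1)^2/2+(d+1)\big)^{2^{n}}$. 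I set this $B_0$ as my degree bound; then $I:h^\infty$ is generated by polynomials of total degree $\leqslant B_0$. A bound on the \emph{number} of generators then comes for free: the elements of $G\cap K[y]$ have pairwise distinct leading monomials, all of degree $\leqslant B_0$ in $n$ variables, so there are at most $\binom{n+B_0}{n}$ of them. Putting $B:=\max\{B_0,\binom{n+B_0}{n}\}$, which depends only on $n$ and $d$, yields the claimed simultaneous bound on degrees and on the number of generators.

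The delicate point — and the one on which the whole statement hinges — is that the degree bound $B_0$ be \emph{uniform in the field} $K$ and depend only on $n$ and $d$; this is what makes the conclusion genuinely field-independent and hence usable as a first-order input (as in Corollary \ref{rosenchar}). I would secure it by noting that Buchberger's algorithm, and the standard analyses bounding the degrees it produces (Dub\'e, or the ideal-membership bounds of Hermann and Mayr--Meyer), run identically over any field and yield degree estimates depending only on the number of variables and the generating degree, never on $K$ itself. A more model-theoretic alternative is a compactness argument over $\ACF_0$: after replacing the generators of $I$ by a $K$-basis of $I\cap K[y]_{\leqslant d}$ one parametrises the data $(h,I)$ by a point of a fixed finite-dimensional affine space, and — granting an a~priori bound on the exponent $r$ in $h^r f\in I$, which is itself part of the effective content one is trying to avoid — the assertion that $I:h^\infty$ is generated in degree $\leqslant B$ by at most $B$ polynomials becomes first-order in those parameters, so compactness together with Noetherianity excludes arbitrarily bad behaviour. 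I expect the effective Gr\"obner/Hermann bound to be the genuine workhorse, the compactness reformulation being clean only once the exponent bound is in hand. Accordingly, the main obstacle is precisely this field-uniform effective degree bound for elimination; everything else, namely the Rabinowitsch reduction and the passage from a degree bound to a bound on the number of generators, is routine.
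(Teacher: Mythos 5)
Your proof is correct, but be aware that the paper itself contains no proof of this statement: it is imported wholesale from \cite{Tre2005} (Lemma 3.2 and Theorem 4.2(iii)), where the bound is obtained by the nonstandard method of van den Dries and Schmidt (the paper's reference \cite{vddsch}): one passes to an ultraproduct of putative counterexamples and uses that the ring of finite-degree polynomials over the ultraproduct field is Noetherian and faithfully flat in the internal polynomial ring, so that saturation, radicals, primality etc.\ commute with the passage and a uniform bound $B(n,d)$ exists -- nonconstructively. Your route is genuinely different and more explicit: the Rabinowitsch identity reduces $I:h^\infty$ to the elimination ideal of $I\,K[y,t]+(th-1)$, elimination is handled by a reduced Gr\"obner basis for a block order, and Dub\'e's combinatorial degree bound (which indeed never mentions the field, so uniformity in $K$ is automatic) gives $B_0(n+1,d+1)$; counting distinct leading monomials then bounds the number of generators by $\binom{n+B_0}{n}$. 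What each buys: yours yields explicit doubly-exponential bounds; the cited argument is softer, avoids Gr\"obner degree analysis entirely, and delivers all the ideal-theoretic bounds needed for Corollary \ref{rosenchar} in one stroke.

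Two small repairs. First, your displayed identity has a typo: it should read $f=t^r(h^rf)+f\bigl(1-(th)^r\bigr)$, not $h^rf=\cdots$. Second, since the hypothesis allows infinitely many generators of degree $\leqslant d$, replace them by a $K$-basis of $I\cap K[y]_{\leqslant d}$ (at most $\binom{n+d}{n}$ elements) before invoking Buchberger; Dub\'e's bound is independent of the number of generators, so nothing is lost. Your own caveat about the compactness alternative is well taken -- as you formulate it, it presupposes the exponent bound; the flatness formulation of van den Dries--Schmidt is precisely how \cite{Tre2005} sidesteps that circularity -- so relying on Dub\'e as the workhorse was the right call.
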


The proof of Corollary~\ref{rosenchar} essentially goes as follows. Given $\Lambda(u,x)$ and $(K,\D)$. For $a$ a tuple from $K$, by Rosenfeld's Lemma, $\LL:=\LL(a,x)$ is a characteristic set of a prime differential ideal of $K\{x\}$ if and only if $\LL$ is coherent and the ideal $(\Lambda):H_{\Lambda}^\infty$ is prime and does not contain nonzero elements that are reduced with respect to $\Lambda$. All of these properties can be checked in a polynomial ring in finitely many variables (as $\Lambda$ is finite and so only finitely many of the algebraic variables $\d^\xi x_i$ appear). Then, using well known bounds in the theory of polynomial rings (see \cite{vddsch}), one readily checks that
``coherency'' and ``not containing reduced elements'' can be written in a first-order fashion. For primality of $(\Lambda):H_{\Lambda}^\infty$ one uses Fact~\ref{bounds} and again the classical bounds to check for primality in polynomial rings (for the latter one needs to have control on the degree of generators, which is where Fact~\ref{bounds} is deployed).

\section{The model companion}\label{companion}
Assume $G=\{g_1,\dots,g_\ell\}$ is a finite group and let $m\in \NN$. In this section, we work out the first-order axiom scheme for the existentially closed models of the theory $\GDF$ of differential fields of characteristic zero equipped with $m$-commuting derivations and a $G$-action by differential automorphisms.

Throughout the fixed derivation symbols will be denoted by $\D=\{\delta_1,\dots, \delta_m\}$ and we let $\Sigma=\{\sigma_g:g\in G\}=\{\sigma_{g_1},\dots,\sigma_{g_\ell}\}$ denote automorphism symbols that will be associated to elements of $G$. We set
$$\L_{\Delta}:=\L_{rings}\cup \Delta\quad \text{ and }\quad
\L_{\D, \S}:=\L_{\Delta}\cup\Sigma;$$
i.e, the language of rings expanded by 
unary function symbols denoting derivations, and 
the language of rings expanded by 
unary function symbols denoting derivations and automorphisms (henceforth called the language of $G$-differential rings).

An $\L_{\D,\S}$-structure $(R,\Delta,\Sigma)$ will be called a $G$-differential ring (field) if $(R,\D)$ is a differential ring (field) and the map $G\to Aut_\D(R)$ given by $g\mapsto \sigma_g$ is a group homomorphism. 

The class of $G$-differential fields of a given characteristic is clearly first-order axiomatisable in the language $\L_{\D,\S}$. In the case of characteristic zero we denote its theory by $\GDF$. We prove that this theory has a model companion. As $\GDF$ is an inductive theory, this is equivalent to showing that the class of existentially closed models of $\GDF$ is elementary. Our approach is a combination of the (geometric) axioms appearing in \cite{HoffKow2018} and \cite{OmarDCFA}.
Let us note here that the same notation, i.e., ``$\GDF$" and ``$\GDCF$", may be used in the case of a non-finite group $G$, but, with the exception of Section \ref{sec:final}, the case of an infinite $G$ will not occur in this paper.

Let $(R,\D,\S)$ be a $G$-differential ring. Let us fix some natural number $n>0$ and let ${\bf x}_{g_i}:=(x_{g_i,1},\ldots,x_{g_i,n})$ be a tuple of \emph{differential} variables over $R$ for $i=1,\dots,\ell$. We now define a formal $G$-differential ring structure on the differential polynomial ring $R\{\bar x\}$ where $\bar x=({\bf x}_{g_1}, \dots,{\bf x}_{g_\ell})$. We do this by extending the $G$-differential structure of $(R,\Delta,\Sigma)$, using the differential structure that $R\{\bar x\}$ is already equipped with,  and the following rules for the difference structure
$$\sigma_g\big(\delta^{\xi}x_{g_i,j} \big):=\delta^{\xi}x_{g\cdot g_i.j},$$
where $g\in G$, $1\leqslant i\leqslant \ell$, $1\leq j\leqslant n$ and $\xi\in\mathbb{N}^m$.

\medskip

Assume that $I$ is an ideal of $R\{\bar{x}\}$. Recall that $I$ is $G$-invariant if for every $g\in G$
we have that $\sigma_g(I)\subseteq I$, and $I$ is differential if for every $1\leqslant i\leqslant  m$ we have that $\delta_i(I)\subseteq I$. Note that if $I$ is a $G$-invariant differential ideal in $R\{\bar{x}\}$, then there is a canonical $G$-differential structure on the quotient $R\{\bar{x}\}/I$.
If moreover $I\cap R=\{0\}$ then the extension
$R\subseteq R\{\bar{x}\}/I$ is a $G$-differential ring extension. Also, if $(R,\Delta,\Sigma)$ is a $G$-differential ring which is an integral domain, then there is a natural $G$-differential structure on the field of fractions of $R$ which extends the $G$-differential structure of $(R,\Delta, \Sigma)$.

For $a=(a_1,\ldots,a_n)\in R^n$, we set $$
\bar\sigma(a)=(\sigma_{g_1}(a_1), \dots,\sigma_{g_1}(a_n),\sigma_{g_2}(a_1),\dots,\sigma_{g_2}(a_n), \dots,\sigma_{g_\ell}(a_1),\dots, \sigma_{g_\ell}(a_n))
$$
In other words, $\bar\sigma(a)$ is just the tuple $(\sigma_{g_i}(a_j)\,:\, 1\leqslant i\leqslant \ell,1\leqslant j\leqslant n)$
ordered in the natural way. For $f\in R\{\bar{x}\}$, as usual $f(\bar\sigma(a))$ denotes the differential polynomial $f(\bar x)$ evaluated at the sequence $\bar\sigma(a)$; namely, this is given by the differential specialization $\bar x\mapsto \bar\sigma(a)$. Finally, for a subset $A$ of $R\{\bar{x}\}$, we set 
$$\V_R^\D(A)=\{b\in R^{n\cdot \ell}: f(b)=0 \text{ for all } f\in A\}.$$

%

\begin{definition}\label{def:GDCF}
A $G$-differential field $(K,\D,\S)$ of characteristic zero (i.e. a model of $\GDF$) is a model of $\GDCF$ if 
for every $n\in\mathbb{N}_{>0}$ and $\bar x=({\bf x}_{g_1}, \dots,{\bf x}_{g_\ell})$ as above,
the following is satisfied
\begin{enumerate}
\item [($\#$)] 
for any characteristic set $\LL=\{h_1,\ldots,h_k\}$ of a $G$-invariant prime differential ideal $P\trianglelefteqslant K\{\bar{x}\}$ and any nonzero $f\in K\{\bar{x}\}$ reduced with respect to $\LL$,
there is $a\in K^{n}$ such that
$$h_1(\bar\sigma(a))=\cdots=h_k(\bar\sigma(a))=0 \; \text{ and }\; (H_{\LL}\cdot f)(\bar\sigma(a))\neq 0.$$ 
\end{enumerate}
\end{definition}

\begin{remark}\label{rem:axioms}
The axiom scheme ($\#$) in Definition \ref{def:GDCF} may be expressed also as:
\begin{enumerate}
\item [($\dagger$)] 
if $P\trianglelefteqslant K\{\bar{x}\}$ is a $G$-invariant prime differential ideal, and $f\in K\{\bar{x}\}\setminus P$,
then there is $a\in K^n$ such that
$$\bar\sigma(a)\in \V_K^\D(P)\setminus V_K^\D(f).$$ 
\end{enumerate}
\end{remark}

\begin{proof}
($\dagger$) $\Rightarrow$ ($\#$):\hspace{2mm} Assume $\Lambda=\{h_1,\dots,h_k\}$, $P$, and $f$ are as in ($\#$). Then $P$ is a $G$-invariant prime differential ideal of $K\{\bar x\}$, and using Fact~\ref{reduced} it follows that $H_{\Lambda}\cdot f\notin P$. Thus, by ($\dagger$), there is $a\in K^n$ such that $\bar\sigma(a)\in \V_K^\D(P)\setminus \V_K^\D(H_\Lambda \cdot f)$. This, since $\Lambda\subset P$, yields
$$h_1(\bar\sigma(a))=\ldots=h_k(\bar\sigma(a))=0,\quad (H_{\LL}\cdot f)(\bar\sigma(a))\neq 0.$$

\medskip

($\#$) $\Rightarrow$ ($\dagger$):\hspace{2mm} Let $P$ be a $G$-invariant prime differential prime ideal of $K\{\bar{x}\}$ and $f\in K\{\bar x\}\setminus P$. Let $\Lambda=\{h_1,\ldots,h_k\}$ be a characteristic set of $P$ and let $f_0\in K\{\bar{x}\}$ be the remainder of $f$ by $\Lambda$ (in particular, $f_0$ is reduced with respect to $\Lambda$). Also, since $f\notin P$, by Fact~\ref{reduced} $f_0$ is nonzero. 

 By \eqref{equchar} in Section \ref{preldiff}, $P=[\LL]:H^{\infty}_{\LL}$. So we have that $$\V_K^\D(\LL)\setminus\V_K^\D(H_{\LL})=\V_K^\D(P)\setminus\V_K^\D(H_{\LL}).$$ Furthermore, since $H_\LL^r f\equiv f_0 \; \mod [\LL]$ for some $r\geqslant 0$, we have
$$
\V_K^\D(\LL)\setminus\V_K^\D(H_{\LL}\cdot f_0)=\V_K^\D(P)\setminus\V_K^\D(H_{\LL}\cdot f). 
$$
By ($\#$), the former contains a point of the form $\bar\sigma(a)$ for some $a\in K^n$. This point is of course also in
$\V_K^\D(P)\setminus V_K^\D(f)$, as desired. 
\end{proof}

At this point the reader might wonder why we bother ourselves with axiom scheme ($\#$) 
when the axiom scheme ($\dagger$) seems to be the most elegant and has the most geometric nature. Let us explain this.
Unlike the algebraic case (cf. \cite{HoffKow2018}), it is not yet known whether ``primality" is a definable property on the parameters for differential ideals. Thus, it is not clear whether one can express condition ($\dagger$) above as a first-order axiom. We bypass this issue by following the strategy used in \cite{OmarDCFA}, thus the introduction of the axiom scheme ($\#$) in terms of characteristic sets of prime differential ideals. 

\smallskip

Henceforth we assume $(K,\D,\Sigma)$ is a $G$-differential field of characteristic zero. We now prove that condition ($\#$) is indeed first-order. We do this in a series of lemmas. Thus, throughout the following three lemmas $n\in\mathbb{N}_{>0}$ is fixed and $\bar x=({\bf x}_{g_1}, \dots,{\bf x}_{g_\ell})$ is as above. Recall that for any $A\subseteq K\{\bar{x}\}$ we denote by $(A)$ and $[A]$ the ideal and differential ideal, respectively, generated by $A$ in $K\{\bar{x}\}$. Also, we fix the canonical orderly ranking on the indeterminates $\bar x$, as in Section~\ref{preldiff}. 

\begin{lemma}\label{equalprime}
Let $P$ and $Q$ be prime differential ideals of $K\{\bar{x}\}$ with characteristic sets $\LL$ and $\Gamma$, respectively. Then, $P=Q$ if and only if $(\Gamma):H_\Gamma^\infty$ contains $\LL$ but not $H_\LL$ and $(\LL):H_\LL^\infty$ contains $\Gamma$ but not $H_\Gamma$.
\end{lemma}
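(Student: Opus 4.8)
The plan is to prove each direction separately, exploiting the characterisation in equation~\eqref{equchar} that a prime differential ideal equals the radical-division of the differential ideal generated by (any of) its characteristic sets. The forward direction is immediate: if $P=Q$, then $\LL\subseteq P=Q=[\Gamma]:H_\Gamma^\infty$, and since $[\Gamma]\subseteq(\Gamma)$ we get $(\Gamma):H_\Gamma^\infty\subseteq[\Gamma]:H_\Gamma^\infty=Q$; I would want the reverse containment here, so let me instead argue directly that $\LL\subseteq P$ and $P=(\LL):H_\LL^\infty$ via Rosenfeld's Lemma (Fact~\ref{rosenfeld}), which tells us that for a characteristic set the radical-division taken with respect to the ordinary ideal $(\Gamma)$ already yields the prime $Q$. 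Thus $(\Gamma):H_\Gamma^\infty=Q$ and $(\LL):H_\LL^\infty=P$, so when $P=Q$ each of $\LL,\Gamma$ lands inside the opposite radical-division ideal, while $H_\LL,H_\Gamma\notin P=Q$ because $H_\LL$ is a product of separants and initials, each of lower rank than the corresponding $f\in\LL$, hence reduced and nonzero, so not in $P$ by Fact~\ref{reduced}.

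For the converse, assume the four stated conditions. First I would use $\LL\subseteq(\Gamma):H_\Gamma^\infty=Q$ (the equality again supplied by Rosenfeld's Lemma) to conclude $\LL\subseteq Q$, and symmetrically $\Gamma\subseteq P$. The point of the hypotheses $H_\LL\notin(\Gamma):H_\Gamma^\infty=Q$ and $H_\Gamma\notin(\LL):H_\LL^\infty=P$ is to control the radical-division: since $\LL\subseteq Q$ and $Q$ is a differential ideal, we have $[\LL]\subseteq Q$, whence $[\LL]:H_\LL^\infty\subseteq Q:H_\LL^\infty=Q$ (the last equality because $Q$ is prime and $H_\LL\notin Q$). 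But $[\LL]:H_\LL^\infty=P$ by \eqref{equchar}, so $P\subseteq Q$. The symmetric argument, using $\Gamma\subseteq P$, $H_\Gamma\notin P$, and $P$ prime differential, gives $Q=[\Gamma]:H_\Gamma^\infty\subseteq P$. Combining the two containments yields $P=Q$.

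\textbf{Main obstacle.} The step I expect to require the most care is bridging between the two flavours of radical-division that appear in the statement and in equation~\eqref{equchar}: the hypotheses are phrased in terms of $(\Gamma):H_\Gamma^\infty$ (division of the \emph{ordinary} ideal), whereas \eqref{equchar} describes $P$ as $[\LL]:H_\LL^\infty$ (division of the \emph{differential} ideal). The reconciliation is exactly Rosenfeld's Lemma (Fact~\ref{rosenfeld}), which for a characteristic set asserts $(\LL):H_\LL^\infty=[\LL]:H_\LL^\infty$ is prime; I must invoke it to identify $(\Gamma):H_\Gamma^\infty$ with $Q$ and $(\LL):H_\LL^\infty$ with $P$, and I should double-check that the non-containment hypotheses $H_\LL\notin(\Gamma):H_\Gamma^\infty$ and $H_\Gamma\notin(\LL):H_\LL^\infty$ are genuinely used (they guarantee $H_\LL\notin Q$ and $H_\Gamma\notin P$, which is what lets the radical-division collapse against a prime). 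The subtlety is purely that the conditions mix $\LL$ with $\Gamma$'s ideal; once translated through \eqref{equchar} and primality, the two symmetric inclusions fall out cleanly.
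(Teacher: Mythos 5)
Your overall strategy is in the right spirit, but it rests on a false identity: you assert, citing Rosenfeld's Lemma (Fact~\ref{rosenfeld}), that $(\Gamma):H_\Gamma^\infty=Q$ and $(\LL):H_\LL^\infty=P$. Rosenfeld's Lemma only says that $(\Gamma):H_\Gamma^\infty$ is prime and contains no nonzero elements reduced with respect to $\Gamma$; it does not identify this ideal with $[\Gamma]:H_\Gamma^\infty$, which is what equals $Q$ by \eqref{equchar}. In general $(\Gamma):H_\Gamma^\infty\subsetneq[\Gamma]:H_\Gamma^\infty$: already for $m=n=1$ and $\Gamma=\{\delta_1 x\}$ (a characteristic set of the prime differential ideal $[\delta_1 x]$, with $H_\Gamma=1$) the element $\delta_1^2x$ lies in $[\Gamma]$ but not in $(\Gamma)$. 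The two radical-divisions agree only on elements \emph{partially reduced} with respect to $\Gamma$ --- that is exactly Fact~\ref{coherent} --- and this restriction cannot be dropped.

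The gap infects both directions. In the forward direction you deduce $\LL\subseteq(\Gamma):H_\Gamma^\infty$ from $\LL\subseteq Q$, which needs the false identity; the paper instead first shows that $P=Q$ forces $|\LL|=|\Gamma|$ and $\operatorname{rk}(f_i)=\operatorname{rk}(g_i)$, so each $f_i$ is reduced with respect to every $g_j$ with $j\neq i$, and then runs the ordinary algebraic division of $f_i$ by $g_i$ in the common leader to place $f_i\in(g_i):i_{g_i}^\infty\subseteq(\Gamma):H_\Gamma^\infty$ (alternatively, Fact~\ref{coherent} applies once partial reducedness with respect to $\Gamma$ is established). In the converse you obtain $H_\LL\notin Q$ from the hypothesis $H_\LL\notin(\Gamma):H_\Gamma^\infty$; but since $(\Gamma):H_\Gamma^\infty\subseteq Q$, this non-containment says nothing about membership in the larger ideal $Q$. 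The correct route, and the paper's, is to use only $\LL\subseteq Q$ and $\Gamma\subseteq P$ together with minimality of characteristic sets to match the ranks of $\LL$ and $\Gamma$, conclude that $H_\LL$ is reduced with respect to $\Gamma$, and then invoke Fact~\ref{reduced} to get $H_\LL\notin Q$; after that your collapse $[\LL]:H_\LL^\infty\subseteq Q:H_\LL^\infty=Q$ goes through exactly as you wrote it.
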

\begin{proof}
($\Rightarrow$) Assume $P=Q$. Then $\Gamma$ is a characteristic set for $P$. It follows that $|\LL|=|\Gamma|$ and we may write $\LL=\{f_1,\dots,f_s\}$ and $\Gamma=\{g_1,\dots,g_s\}$. Furthermore, $rk(f_i)=rk(g_i)$ for $i=1,\dots, s$. Thus, any $h\in K\{\bar x\}$ is reduced with respect to $\LL$ iff it is reduced with respect to $\Gamma$. It follows that $H_\LL$ is reduced with respect to $\Gamma$ and hence, by Fact
~\ref{reduced}, not in $Q=[\Gamma]:H_\Gamma^\infty$. In particular, $H_\LL\notin (\Gamma):H_\Gamma^\infty$. Now, since $f_i\in \Gamma$ and it is reduced with respect to $g_j$, for $j\neq i$, and $rk(f_i)=rk(g_i)$, the classical algebraic division algorithm performed with respect to the leader of $g_i$ yields that $f_i\in (g_i):i_{g_i}^\infty$. It follows that $\LL$ is contained in $(\Gamma):H_\Gamma^\infty$. The proof that $(\LL):H_\LL^\infty$ contains $\Gamma$ but not $H_\Gamma$ is analogous (switching the roles of $\LL$ and $\Gamma$).

($\Leftarrow$) We prove $P\subseteq Q$ (the proof of the other containment is symmetrical). Since $\LL\subset Q$ and $\Gamma\subset P$, we get as above that $\LL=\{f_1,\dots,f_s\}$ and $\Gamma=\{g_1,\dots,g_s\}$, for some $s$, and $rk(f_i)=rk(g_i)$ for $i=1,\dots, s$. It then follows that $H_\LL$ is reduced with respect to $\Gamma$. By Fact
~\ref{reduced} we obtain that $H_\Lambda\notin [\Gamma]:H_\Gamma^\infty=Q$. Thus, 
$$[\LL]:H_\LL^\infty\subseteq Q$$
but the former is $P$. 
\end{proof}

\begin{remark}\label{algbounds}
We point that that the condition $(\Gamma):H_\Gamma^\infty$ contains $\LL$ but not $H_\LL$ can be checked in a polynomial ring in \emph{finitely many} variables. Indeed, one only needs to work with the variables actually appearing in $\LL$ and $\Gamma$. This allows us to express such containments in a first-order fashion -- as degree bounds are well known, such as the ones in Fact~\ref{bounds}.
\end{remark}

\begin{lemma}\label{invariant}
Let $\LL=\{f_1,\dots,f_s\}$ be a characteristic set of a prime differential ideal $P$ of $K\{\bar x\}$ and set $\LL_g=\{\sigma_g(f_1),\dots,\sigma_g(f_s)\}$ for $g\in G$. Then, $P$ is $G$-invariant if and only if, for all $g\in G$, we have that $(\LL_g):H_{\LL_g}^\infty$ contains $\LL$ but not $H_\LL$ and $(\LL):H_\LL^\infty$ contains $\LL_g$ but not $H_{\LL_g}$. 
\end{lemma}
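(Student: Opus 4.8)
The plan is to reduce the biconditional to the equality criterion of Lemma~\ref{equalprime}, applied to $P$ and its images under the group action. First I would record that, since $G$ is a group, $P$ is $G$-invariant if and only if $\sigma_g(P)=P$ for every $g\in G$: the hypothesis $\sigma_g(P)\subseteq P$ for all $g$ gives, upon applying $\sigma_g$ to $\sigma_{g^{-1}}(P)\subseteq P$, the reverse inclusion $P\subseteq\sigma_g(P)$, hence equality. So it suffices to show, for each fixed $g\in G$, that $\sigma_g(P)=P$ is equivalent to the two containment conditions displayed in the statement.

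Fix $g\in G$. Since $\sigma_g$ is a differential ring automorphism of $K\{\bar x\}$ (extending the $G$-action on $K$ and permuting the indeterminates via $\d^\xi x_{g_i,j}\mapsto\d^\xi x_{gg_i,j}$), the image $\sigma_g(P)$ is again a prime differential ideal. The key step is that $\LL_g=\sigma_g(\LL)$ is a characteristic set of $\sigma_g(P)$ and that $H_{\LL_g}=\sigma_g(H_\LL)$, so that $\sigma_g(P)=[\LL_g]:H_{\LL_g}^\infty$ by \eqref{equchar}. Granting this, I would invoke Lemma~\ref{equalprime} on the pair $P$, $\sigma_g(P)$ with characteristic sets $\LL$, $\LL_g$: it asserts that $P=\sigma_g(P)$ holds if and only if $(\LL_g):H_{\LL_g}^\infty$ contains $\LL$ but not $H_\LL$ and $(\LL):H_\LL^\infty$ contains $\LL_g$ but not $H_{\LL_g}$ --- precisely the conditions in the lemma. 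Conjoining over all $g\in G$ and using the first paragraph then finishes the argument.

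I expect the main obstacle to be justifying that $\LL_g$ is a characteristic set of $\sigma_g(P)$, together with the identity $H_{\LL_g}=\sigma_g(H_\LL)$, which is what licenses the use of Lemma~\ref{equalprime}. The delicate point is that $\sigma_g$ permutes the blocks of variables by left translation $g_i\mapsto gg_i$, and this need not preserve the linear order on blocks that breaks ties within a fixed order of differentiation; consequently $\sigma_g$ preserves the primary key $\sum_k\xi_k$ of the orderly ranking but transports $<$ only to the conjugate ranking $<_g$ given by $u<_g v\iff\sigma_g^{-1}u<\sigma_g^{-1}v$. Thus, strictly speaking, $\LL_g$ is a characteristic set of $\sigma_g(P)$ relative to $<_g$, and the leaders, separants and initials entering $H_{\LL_g}$ must be read in that ranking. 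I would therefore either check that $<_g$ is again an admissible orderly ranking and that the radical-division containments occurring in Lemma~\ref{equalprime} are insensitive to passing between $<$ and $<_g$, or else argue directly from $\sigma_g(P)=[\sigma_g\LL]:\sigma_g(H_\LL)^\infty$ that the stated containments are equivalent to $\sigma_g(P)=P$. Everything outside this ranking-compatibility check is a direct application of Lemma~\ref{equalprime} and the group law.
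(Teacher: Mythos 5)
Your argument is exactly the paper's proof: for each $g\in G$ it observes that $\LL_g$ is a characteristic set of the prime differential ideal $\sigma_g(P)$ and then applies Lemma~\ref{equalprime} to the pair $P$, $\sigma_g(P)$, conjoining over all $g$. The ranking-compatibility issue you flag as the main obstacle (that left translation permutes the blocks of variables and so need not preserve the canonical orderly ranking, hence leaders, separants and $H_{\LL_g}$ must be checked) is precisely the point the paper dismisses with the single word ``clearly'', so your proposal is, if anything, more careful than the published argument.
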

\begin{proof}
Let $g\in G$. Clearly $\LL_g$ is a characteristic set for the prime differential ideal $\sigma_g(P)$. By Lemma~\ref{equalprime}, equality $P=\sigma(P)$ holds if and only the conditions in the statement hold. 
\end{proof}

We can now prove:

\begin{lemma}
Let $\LL(u,\bar x)$ be a finite subset of $\mathbb Z[u]\{\bar x\}$ where $u$ is a tuple of variables (thought as parameters). Then there is a first-order formula $\phi(y)$ in the language $\L_{\D,\S}$ such for any $(K,\D,\S)\models \GDF$ we have that $K\models \phi(a)$ if and only if $\LL(a,\bar x)$ is a characteristic set of a $G$-invariant prime differential ideal of $K\{\bar x\}$. 
\end{lemma}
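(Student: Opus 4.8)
The plan is to write the desired formula as a conjunction of two parts: one expressing that $\LL(a,\bar x)$ is a characteristic set of a prime differential ideal $P\trianglelefteqslant K\{\bar x\}$, and one expressing that this $P$ is $G$-invariant. The first part is immediate: applying Corollary~\ref{rosenchar} to the differential polynomial ring $K\{\bar x\}$ (treating $\bar x$ as a single tuple of $n\ell$ differential variables equipped with the fixed canonical orderly ranking) produces a formula $\phi_0(y)$ in $\L_{\D}$, hence in $\L_{\D,\S}$, such that $K\models\phi_0(a)$ if and only if $\LL(a,\bar x)$ is a characteristic set of a prime differential ideal. It then remains to express, \emph{under the standing assumption} that $\phi_0(a)$ holds, that the associated $P=[\LL(a,\bar x)]:H_{\LL(a,\bar x)}^\infty$ is $G$-invariant; for this I invoke Lemma~\ref{invariant}.

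The key observation is that the difference operators can be pushed onto the parameters. Writing $\LL(u,\bar x)=\{f_1(u,\bar x),\dots,f_s(u,\bar x)\}$ and expanding $f_i(u,\bar x)=\sum_\alpha c_\alpha(u)\,M_\alpha(\bar x)$ with $c_\alpha\in\mathbb Z[u]$ and each $M_\alpha$ a monomial in the algebraic indeterminates $\d^\xi x_{g_{i'},j}$, the defining rule $\sigma_g(\d^\xi x_{g_{i'},j})=\d^\xi x_{g\cdot g_{i'},j}$ together with the fact that $\sigma_g$ fixes $\mathbb Q$ gives
$$\sigma_g\big(f_i(a,\bar x)\big)=\sum_\alpha c_\alpha(\sigma_g(a))\,M_\alpha^{[g]}(\bar x)=f_i^{[g]}(\sigma_g(a),\bar x),$$
where $M_\alpha^{[g]}$ is $M_\alpha$ after the variable relabelling $x_{g_{i'},j}\mapsto x_{g\cdot g_{i'},j}$ and $f_i^{[g]}(u,\bar x)\in\mathbb Z[u]\{\bar x\}$ is the resulting \emph{fixed} differential polynomial (independent of $a$). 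Thus the set $\LL_g$ of Lemma~\ref{invariant}, formed from $\LL(a,\bar x)$, equals $\LL^{[g]}(\sigma_g(a),\bar x)$, where $\LL^{[g]}(u,\bar x):=\{f_1^{[g]}(u,\bar x),\dots,f_s^{[g]}(u,\bar x)\}$ is a fixed finite subset of $\mathbb Z[u]\{\bar x\}$. In particular $\LL$ and $\LL_g$ together involve only finitely many of the algebraic indeterminates.

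With this in hand, I first-orderise the containment conditions of Lemma~\ref{invariant} exactly as in the proof sketch of Corollary~\ref{rosenchar} and as recorded in Remark~\ref{algbounds}. Since $\phi_0(a)$ guarantees that $\LL(a,\bar x)$ is a characteristic set of a prime differential ideal, applying $\sigma_g$ shows $\LL_g$ is a characteristic set of the prime differential ideal $\sigma_g(P)$, so Remark~\ref{algbounds} applies to each of the four conditions -- that $(\LL_g):H_{\LL_g}^\infty$ contains $\LL$ but not $H_\LL$, and that $(\LL):H_\LL^\infty$ contains $\LL_g$ but not $H_{\LL_g}$. Each is a radical-division membership statement in the polynomial ring in the finitely many algebraic indeterminates that occur, with the two families of coefficients lying in parameter tuples $u$ (for $\LL$) and $v$ (for $\LL^{[g]}$). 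Using Fact~\ref{bounds} to bound the degrees of the generators of the radical-division ideals and the classical bounds for ideal membership, these conditions are captured by an $\L_{rings}$-formula $\theta_g(u,v)$ (quantifying existentially over bounded-degree cofactors). I then set $\psi_g(y):=\theta_g(y,\sigma_g(y))$, a legitimate $\L_{\D,\S}$-formula since $\sigma_g(y)$ is a tuple of $\L_{\D,\S}$-terms; by the displayed identity, $K\models\psi_g(a)$ encodes precisely the $g$-th pair of conditions of Lemma~\ref{invariant} for $\LL(a,\bar x)$.

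Finally I set $\phi(y):=\phi_0(y)\wedge\bigwedge_{g\in G}\psi_g(y)$; this is a \emph{finite} conjunction because $G$ is finite, and by Lemma~\ref{invariant} it holds of $a$ exactly when $\LL(a,\bar x)$ is a characteristic set of a $G$-invariant prime differential ideal. The one genuinely substantive step is the middle one -- the identity $\sigma_g(f_i(a,\bar x))=f_i^{[g]}(\sigma_g(a),\bar x)$ -- which is what lets the (a priori $a$-dependent) images $\sigma_g(\LL)$ be read off as substitution instances of fixed differential polynomials, thereby reducing $G$-invariance to a finite boolean combination of parametrised algebraic containments. Everything after that is the routine bounds bookkeeping already employed for Corollary~\ref{rosenchar}.
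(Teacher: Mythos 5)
Your proof is correct and follows essentially the same route as the paper: Corollary~\ref{rosenchar} supplies the formula for ``characteristic set of a prime differential ideal'', and Lemma~\ref{invariant} together with Remark~\ref{algbounds} handles $G$-invariance, yielding a finite conjunction over $g\in G$. The identity $\sigma_g\big(f_i(a,\bar x)\big)=f_i^{[g]}(\sigma_g(a),\bar x)$, which you spell out in order to read the invariance conditions as $\L_{\D,\S}$-formulas evaluated at $y$ and $\sigma_g(y)$, is precisely the detail that the paper's two-line proof leaves implicit.
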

\begin{proof}
From Corollary~\ref{rosenchar}, there is a first-order formula $\psi(y)$ in the differential ring language $\L_{\D}$ such for any differential field $(K,\D)$ of characteristic zero we have that $K\models \phi(a)$ if and only of $\LL(a,\bar x)$ is a characteristic of a prime differential ideal of $K\{\bar x\}$. By Lemma~\ref{invariant} and Remark~\ref{algbounds}, there is a first-order formula $\rho(y)$ in the language $\L_{\D,\S}$ such for any $G$-differential field $(K,\D,\S)$ we have that $K\models \phi\land\rho\,(a)$ if and only of $\LL(a,\bar x)$ is a characteristic for a $G$-invariant prime differential ideal of $K\{\bar x\}$.
\end{proof}

This yields

\begin{corollary}
Condition ($\#$) is expressible as a first-order scheme of sentences in the language $\L_{\D,\S}$.
\end{corollary}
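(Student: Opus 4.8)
The plan is to convert the single, seemingly second-order condition ($\#$)---which quantifies over \emph{all} characteristic sets $\LL$ of $G$-invariant prime differential ideals and over \emph{all} reduced nonzero $f$---into a countable conjunction of genuine first-order $\L_{\D,\S}$-sentences, one for each \emph{combinatorial shape} of the pair $(\LL,f)$. The key observation is that, once $n$ is fixed, a characteristic set $\LL=\{h_1,\dots,h_k\}$ together with a reduced nonzero $f$ is determined by a finite amount of combinatorial data---namely $k$, the leaders $u_{h_1},\dots,u_{h_k},u_f$ (each a specific algebraic indeterminate $\d^\xi x_{g_i,j}$), the degrees $d_{h_1},\dots,d_{h_k},d_f$, and the monomial supports of the $h_i$ and of $f$ in the finitely many algebraic indeterminates below these leaders---together with a tuple of coefficients from $K$. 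The coefficients become parameters and the combinatorial data become the index of the scheme.

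For each such shape $S$ I would fix a template $\LL(u,\bar x)=\{h_1(u,\bar x),\dots,h_k(u,\bar x)\}$ and $f(u,\bar x)$ in $\mathbb Z[u]\{\bar x\}$, with $u$ a finite tuple of coefficient variables, the prescribed monomial supports being chosen so that reducedness of $f$ with respect to $\LL$ holds at the purely formal (shape) level. The preceding Lemma then supplies a first-order $\L_{\D,\S}$-formula $\phi_S(u)$ with $K\models\phi_S(a)$ iff $\LL(a,\bar x)$ is a characteristic set of a $G$-invariant prime differential ideal. To guarantee that the substituted polynomials actually carry the leaders and degrees prescribed by $S$ (so that reducedness and the nonvanishing of $f$ are not accidentally destroyed when top coefficients vanish), I would conjoin to $\phi_S$ the finitely many inequations asserting that the designated initials of $h_1,\dots,h_k$ and of $f$ are nonzero; call the resulting guard $\psi_S(u)$. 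The scheme is then the family
$$\forall u\;\Big[\psi_S(u)\;\rightarrow\;\exists a_1\cdots\exists a_n\Big(\bigwedge_{i=1}^{k} h_i(\bar\sigma(a))=0\;\wedge\;(H_{\LL}\cdot f)(\bar\sigma(a))\neq 0\Big)\Big],$$
one sentence for each shape $S$ and each $n\in\NN_{>0}$.

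The remaining point is that each instance of the conclusion is genuinely first-order in $\L_{\D,\S}$. Here I would use that the evaluation $h_i(\bar\sigma(a))$ is an $\L_{\D,\S}$-term in $a_1,\dots,a_n$ and $u$: the differential specialization $\bar x\mapsto\bar\sigma(a)$ sends $\d^\xi x_{g_i,j}$ to $\d^\xi(\sigma_{g_i}(a_j))=\sigma_{g_i}(\d^\xi a_j)$, a composition of the function symbols of $\D\cup\Sigma$ applied to $a_j$; since each $h_i$ is a polynomial in finitely many such indeterminates with coefficients among the $u$, the value $h_i(\bar\sigma(a))$ is assembled from these terms by the ring operations, and likewise for $H_{\LL}\cdot f$. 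Thus every equation and the final inequation are (negated) atomic formulas, and the displayed sentence belongs to $\L_{\D,\S}$.

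Finally I would check that the conjunction of these sentences is equivalent to ($\#$) over models of $\GDF$. Every characteristic set $\LL$ of a $G$-invariant prime differential ideal together with a reduced nonzero $f$ occurring in a given $K$ has a well-defined shape $S$ (its genuine leaders, degrees and supports), whence $\LL=\LL(a,\bar x)$ and $f=f(a,\bar x)$ for a coefficient tuple $a$ satisfying $\psi_S(a)$; the $S$-sentence then forces precisely the existence demanded by ($\#$), and conversely each $S$-sentence is an immediate consequence of ($\#$). I expect the main obstacle to be bookkeeping rather than mathematics: one must verify that the guard $\psi_S$ isolates exactly the valid instances---in particular that insisting on nonzero designated initials pins the shape, so that $f$ is truly reduced and nonzero relative to the genuine $\LL(a,\bar x)$---and that the enumeration of shapes is exhaustive, so that no characteristic set escapes the scheme.
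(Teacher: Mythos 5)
Your proposal is correct and follows essentially the same route the paper intends: the corollary is presented as an immediate consequence of the preceding Lemma, obtained by enumerating finitely-presented templates $\LL(u,\bar x), f(u,\bar x)$ over $\mathbb Z[u]\{\bar x\}$, using the Lemma's formula to detect characteristic sets of $G$-invariant prime differential ideals, and writing the existential conclusion as an $\L_{\D,\S}$-sentence via the term interpretation of $\bar\sigma(a)$. Your extra guard on the designated initials is a reasonable way to make the reducedness bookkeeping explicit, and the exhaustiveness over shapes you flag is exactly the point that makes the scheme equivalent to ($\#$).
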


We now prove that the theory $\GDCF$ from Definition~\ref{def:GDCF} is indeed an axiomatisation of the existentially closed models of the theory $\GDF$.

\begin{theorem}
A model of $\GDF$ is existentially closed if and only if it is a model of $\GDCF$.
Thus the theory $\GDCF$ is the model companion of the theory $\GDF$.
\end{theorem}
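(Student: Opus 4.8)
The plan is to prove both directions of the equivalence ``existentially closed $\iff$ model of $\GDCF$'', from which the model-companion claim follows immediately (since $\GDCF$ is first-order by the preceding corollary and every $G$-differential field embeds into an existentially closed one, $\GDCF$ axiomatises the e.c.\ models and is thus the model companion of the inductive theory $\GDF$).

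First I would prove that every existentially closed $(K,\D,\S)\models\GDF$ satisfies the scheme ($\#$). Fix $n$, a characteristic set $\LL=\{h_1,\dots,h_k\}$ of a $G$-invariant prime differential ideal $P\trianglelefteqslant K\{\bar x\}$, and a nonzero $f$ reduced with respect to $\LL$. The idea is to realise the required point in a $G$-differential field extension of $K$ and then pull it back by existential closedness. Concretely, I would pass to the $G$-differential domain $K\{\bar x\}/P$; since $P\cap K=\{0\}$ this is a $G$-differential ring extension of $K$, and taking its fraction field $L$ gives a $G$-differential field extension $(L,\D,\S)\supseteq(K,\D,\S)$. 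Let $b\in L^{n\cdot\ell}$ be the image of $\bar x$. Then $b\in\V^\D_L(P)$, and because $f$ is nonzero and reduced with respect to $\LL$, Fact~\ref{reduced} gives $f\notin P$, so $f(b)\neq 0$; since $H_\LL\notin P$ as well (its factors are separants and initials, of lower rank, hence reduced), we get $(H_\LL\cdot f)(b)\neq 0$. The point $b$ is of the shape $\bar\sigma(a)$ for $a=(x_{g_1,1},\dots,x_{g_1,n})$, i.e.\ the $g_1$-component, because the $G$-structure on $K\{\bar x\}$ was defined by $\sigma_g(\delta^\xi x_{g_i,j})=\delta^\xi x_{g\cdot g_i,j}$, so the $\sigma_g$-image of the $g_1$-block is exactly the $g$-block. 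The conditions $h_i(\bar\sigma(a))=0$ and $(H_\LL\cdot f)(\bar\sigma(a))\neq 0$ are a finite conjunction of atomic and negated-atomic $\L_{\D,\S}$-formulas with parameters from $K$, satisfiable in the extension $L$; existential closedness then yields a solution $a\in K^n$, establishing ($\#$).

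For the converse I would show that a model $(K,\D,\S)$ of $\GDCF$ is existentially closed in $\GDF$. Take any $G$-differential field extension $(M,\D,\S)\supseteq(K,\D,\S)$ and a quantifier-free $\L_{\D,\S}$-formula with parameters in $K$ that has a solution $c$ in $M$; I must find a solution in $K$. Any such system reduces, after clearing the automorphism symbols via the difference variables $x_{g_i,j}$, to finitely many differential-polynomial equations and one inequation over $K$ evaluated at $\bar\sigma(c)$. Let $P:=\{p\in K\{\bar x\}: p(\bar\sigma(c))=0\}$ be the vanishing differential ideal of the tuple $\bar\sigma(c)$ in $K\{\bar x\}$; this is a prime differential ideal (it is the kernel of the $G$-differential specialisation $\bar x\mapsto\bar\sigma(c)$ into the domain $M$), and it is $G$-invariant because $\bar\sigma(c)$ is $G$-equivariantly arranged: applying $\sigma_g$ permutes the blocks in the same way that $\sigma_g$ permutes the variables, so $p\in P\Rightarrow\sigma_g(p)\in P$. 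Let $\LL$ be a characteristic set of $P$, and let $f\in K\{\bar x\}$ be (a differential polynomial representing) the inequation; then $f(\bar\sigma(c))\neq 0$ means $f\notin P$. Applying the equivalent form ($\dagger$) of the axioms from Remark~\ref{rem:axioms}, there is $a\in K^n$ with $\bar\sigma(a)\in\V^\D_K(P)\setminus\V^\D_K(f)$, and such an $a$ solves the original system in $K$.

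The main obstacle, and the step deserving the most care, is the converse direction, specifically verifying that the vanishing ideal $P$ of $\bar\sigma(c)$ is $G$-invariant and that the formula really does reduce to the clean form handled by ($\dagger$). The $G$-invariance is where the whole construction of the formal $G$-differential structure on $K\{\bar x\}$ pays off: one must check that $\sigma_g$ acting on $K\{\bar x\}$ (by $\sigma_g(\delta^\xi x_{g_i,j})=\delta^\xi x_{g\cdot g_i,j}$ together with its action on coefficients) is compatible with the specialisation $\bar x\mapsto\bar\sigma(c)$, i.e.\ that $\sigma_g(p)(\bar\sigma(c))=\sigma_g^M(p(\bar\sigma(c)))$, so that $p(\bar\sigma(c))=0$ forces $\sigma_g(p)(\bar\sigma(c))=0$. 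A secondary point to handle carefully is that a single inequation $f\neq 0$ suffices: a finite conjunction of inequations can be replaced by the inequation of their product, and since $P$ is prime, $f\notin P$ is equivalent to each factor lying outside $P$, so no generality is lost. Once these compatibility checks are in place, ($\dagger$) does exactly the required work and the argument closes.
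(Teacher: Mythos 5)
Your proposal is correct and follows essentially the same route as the paper's proof: the forward direction realises the required point in $\Frac(K\{\bar x\}/P)$ with its induced $G$-differential structure and pulls it back by existential closedness, and the converse forms the ($G$-invariant, prime) vanishing ideal of $\bar\sigma(c)$ and invokes the reformulation ($\dagger$) from Remark~\ref{rem:axioms}. Your additional compatibility checks (that $H_\LL\notin P$, that $\sigma_g$ commutes with the specialisation, and the reduction to a single inequation via primality) are details the paper leaves implicit, and they are handled correctly.
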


\begin{proof}
Let $(K,\Delta, \Sigma)$ be an existentially closed model of $\GDF$. We will show that it satisfies the axiom scheme ($\#$).
Suppose that $P\trianglelefteqslant K\{\bar{x}\}$ is a $G$-invariant prime differential ideal,
$\LL=\{h_1,\ldots,h_k\}$ is a characteristic set of $P$, and $f\in K\{\bar{x}\}$ is nonzero and reduced with respect to $\Lambda$ (in particular, by Fact~\ref{reduced}, $f\notin P$).
Consider $L=\Frac(K\{\bar{x}\}/P)$ with its natural $G$-differential structure, which is a $G$-differential field extension of $(K,\Delta,\Sigma)$. If we set $c=(x_1+P,\ldots,x_n+P)\in L^n$, then $\bar\sigma(c)$ is a common zero of $h_1,\ldots,h_k$ in $L$ and we have that $H_\Lambda\cdot f(\bar\sigma(c))\;\neq 0$. 
Since $(K,\Delta,\Sigma)$ is an existentially closed model, 
there exists $a\in K^n$ such that $\bar\sigma(a)$ a common zero of $h_1,\ldots,h_k$ such that $H_\Lambda\cdot f(\bar\sigma(a))\; \neq 0$.

On the other hand, now assume that $(K,\Delta,\Sigma)\models\GDCF$. We will show that $(K,\Delta,\Sigma)$ is existentially closed among models of $\GDF$.
Let $(L,\Delta',\Sigma')$ be a $G$-differential field extending $(K,\Delta,\Sigma)$ and $\varphi(x)$, with $x=(x_1,\dots,x_n)$, a quantifier free $\mathcal{L}_{\Delta,\Sigma}$-formula over $K$ with a realisation $b\in L^n$. We must show that $\varphi$ has a realisation $a\in K^n$. Since $\varphi(x)$ is quantifier free, it is equivalent to a formula of the form $\psi(\bar\sigma(x))$ where $\psi(\bar x)$ is a quantifier free $\L_\Delta$-formula over $K$ with $\bar x=({\bf x}_{g_1}, \dots,{\bf x}_{g_\ell})$. So $\psi$ is a disjunction of formulas of the form
$$w_1(\bar{x}) = \cdots = w_s(\bar{x})=0 \quad
\wedge \quad  f(\bar{x})\neq 0,
$$
where $w_1,\ldots,w_s,f\in K\{\bar{x}\}$.
Therefore, we may assume that $\bar\sigma(b)$ is a common zero of $w_1, \dots,w_s$ and $f(\bar\sigma(b))\neq 0$.
Let $P:=\lbrace g(x)\in K\{\bar{x}\}\;|\; g(\bar\sigma(b))=0\rbrace$. The ideal $P$ is a $G$-invariant prime differential ideal which does not contain $f$.
By Remark \ref{rem:axioms}, ($\dagger$) implies that  there is $a\in K^n$ such that $\bar{\sigma}(a)\in \V^{\Delta}_K(P)\setminus\V^{\Delta}_K(f)$.
In particular, $\bar{\sigma}(a)$ is a common zero of $w_1,\ldots,w_s$ such that $f(\bar{\sigma}(a))\neq 0$.
It follows that $K\models \psi(\bar\sigma(a))$ and so $K\models \varphi(a)$, as desired.
\end{proof}


\section{Model theoretic properties}\label{theproperties}
In this section, we apply several results from the first author paper \cite{Hoff3} on structures with a group action and PAC substructures. Before doing so, we need to point out that the theory $\GDF$ describes substructures of a monster model of the theory $\DCFm$ that are equipped with a $G$-action by differential automorphisms, and $\GDCF$ describes those substructures that are existentially closed. We also note that the theory $\DCFm$ is quite \emph{tame} and so fits into the framework of the first author paper \cite{Hoff3}. More precisely, $\DCFm$ is a complete $\omega$-stable theory of rank $\omega^m$
that has quantifier elimination and elimination of imaginaries,
see \cite{McGrail}.

\medskip

Henceforth, we fix a monster model $(\U, \Delta)\models \DCFm$. Let $A,B,C\subseteq \U$. By $\langle A\rangle$ we denote the differential subfield of $\U$ generated by $A$. The following facts are well known (and can be found in \cite{McGrail}).

\begin{fact}\label{factsDCF} \
\begin{enumerate}
    \item $\dcl^{\DCFm}(A)=\langle A\rangle$,
    \item $\acl^{\DCFm}(A)=\langle A\rangle^{\alg},$
    \item $A\ind^{\DCFm}_C B$ (the forking independence in $\DCFm$) holds if and only if
    $\langle AC\rangle$ is free from $\langle BC\rangle$ over $\langle C\rangle$ (i.e. every finite tuple from
    $\langle AC\rangle$ which is algebraically independent over $\langle C\rangle$ remains algebraically independent over $\langle BC\rangle$).
\end{enumerate}
\end{fact}

\begin{remark}
By \cite{OmarJames}, the theory $\DCFm$ does not have the finite cover property. Furthermore, by \cite[Theorem 8.6]{HoffLee}, $\DCFm$ enjoys the boundary property B(3). We note that in \cite{HoffLee} only the case case $m=1$ is considered; however, the proof there works verbatim in the partial case.
\end{remark}

\subsection{PAC-differential fields} To obtain a more precise model-theoretic description of $\GDCF$, we will use the notion of PAC substructures. These substructures of models of a given theory can be more or less understood as being one step away from being a model. For instance, in the case of $\DCFm$ (or any $\omega$-stable theory for that matter) the algebraic closure of a PAC substructure is a model of $\DCFm$ -- see \cite{Hoff4}.

The notion of PAC substructures has defined in several places; for instance, \cite{manuscript}, \cite{PiPolk}, and \cite{Hoff3}. Here we use the following (cf. \cite{Hoff3}):

\begin{definition}
Assume that $T$ is a complete (first-order) theory and let $\mathfrak{C}$ be a monster model of $T$.
\begin{enumerate}
    \item Let $A\subseteq B$ be small subsets of $\mathfrak{C}$. We say that $A\subseteq B$ is \emph{regular}
    if
    $$\dcl^{\eq}(B)\cap\acl^{\eq}(A)=\dcl^{\eq}(A).$$
    \item Let $P$ be a small substructure of $\mathfrak{C}$. We say that $P$ is \emph{PAC} (\emph{pseudo-algebraically closed}) \emph{substructure} if for every regular extension $P\subseteq N$ ($\subseteq \mathfrak{C}$),
    the substructure $P$ is existentially closed in $N$.
\end{enumerate}
\end{definition}
\noindent
The notion of PAC substructures works well if the theory $T$ in the background is stable, has quantifier elimination and elimination of imaginaries, which is the case here as both ACF and $\DCFm$ have such properties.
If $T$ is ACF then PAC substructures are called PAC-fields, and if $T$ is $\DCFm$ then PAC substructures are called PAC-differential fields.

\medskip

In our arguments below, we will make use of properties of differentially large fields \cite{OmarMarcus}. We first recall some definitions.

\begin{definition} \
\begin{enumerate}
    \item A field $K$ is called large if $K$ is existentially closed in its Laurent series field $K((t))$. Equivalently (and in fact the usual definition \cite{Pop2014}), $K$ is large if and only if every algebraic curve over $K$ with a smooth $K$-rational point has infinitely many $K$-rational points.
    \item A differential field $(K,\D)$ is differentially large if $(K,\D)$ is existentially closed in $(K((t_1,\dots,t_m)),\D)$ where the latter derivations $\D=\{\d_1,\dots,\d_m\}$ are given by the unique (commuting) derivations extending those on $K$ that commute with infinite sums and satisfy $\delta_i(t_j)=\frac{\partial t_j}{\partial t_i}$ for $1\leq i,j\leq m$. Equivalently (and in fact the definition given \cite{OmarMarcus}), $(K,\D)$ is differentially large if and only if $K$ is large as a field and for every differential field extension $(L,\D)$, if $K$ existentially closed in $L$, then $(K,\D)$ is existentially closed in $(L,\D)$.
\end{enumerate}
\end{definition}

\begin{remark}\label{diffaut}
We recall some facts about algebraic extensions of differential fields that will be used throughout, see \cite[Chapter 2, Section 2]{kol1} for proofs. Given a differential field $(K,\D)$ (of characteristic zero) and $L$ an algebraic extension, the derivations on $K$ extend uniquely to $L$. Furthermore, if $\sigma:K\to K$ is a differential automorphism, then any field automorphism on $L$ extending $\sigma$ is differential. It follows that $\aut(L/K)=\aut_{\D}(L/K)$. In particular, this applies when $L=K^{\alg}$.
\end{remark}

The following is a compilation of results from \cite{OmarMarcus}. 

\begin{fact}\label{difflarge} \
\begin{enumerate}
    \item Any algebraic extension of a differentially large is differentially large.
    \item If $(K,\D)$ is differentially large, then $(K^{\alg},\D)\models \DCFm$. 
    \item The class of differentially large fields is elementary. Namely, there is a first-order theory in the language of differential rings whose models are precisely all differentially large fields.
    \item Let $(L,\D)$ be a differential field extension of $(K,\D)$ with both differentially large. Assume that $K$ has a model-complete theory in the language of rings expanded by constants for $K$. If $K\preccurlyeq L$, then $(K,\D)\preccurlyeq (L,\D)$.
\end{enumerate}
\end{fact}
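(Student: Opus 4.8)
The plan is to derive all four items from the two equivalent descriptions of differential largeness in the definition, together with one structural characterisation that I would isolate and prove first, namely: $(\star)$ a large differential field $(F,\D)$ is differentially large if and only if its algebraic closure $(F^{\alg},\D)$, carrying the unique extension of $\D$ (see Remark~\ref{diffaut}), is a model of $\DCFm$. Granting $(\star)$, part (1) is immediate: if $K'/K$ is algebraic then $K'$ is large (largeness is preserved under algebraic extensions, by Pop), and $(K')^{\alg}=K^{\alg}$, so $((K')^{\alg},\D)=(K^{\alg},\D)\models\DCFm$ by $(\star)$ applied to $K$; applying $(\star)$ to $K'$ then gives that $(K',\D)$ is differentially large. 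For a \emph{finite} extension $K'/K$ one can avoid $(\star)$ by restriction of scalars: using the series definition and the standard fact that $K$ is relatively algebraically closed in $K((t_1,\dots,t_m))$, one has $K'((\bar t))=K'\otimes_K K((\bar t))$, and writing each differential variable in a $K$-basis of $K'$ turns a differential system over $K'$ with a solution in $K'((\bar t))$ into a system over $K$ with a solution in $K((\bar t))$, which descends to $K$ by differential largeness of $K$ and reassembles to a solution in $K'$.

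Part (2) I would deduce from part (1) without re-invoking $(\star)$. By part (1), $(K^{\alg},\D)$ is differentially large. Since an algebraically closed field is existentially closed, as a field, in every field extension, for any differential field extension $(M,\D)\supseteq(K^{\alg},\D)$ the field $K^{\alg}$ is existentially closed in $M$; the transfer form of differential largeness then gives that $(K^{\alg},\D)$ is existentially closed in $(M,\D)$. As $M$ was arbitrary, $(K^{\alg},\D)$ is existentially closed among all differential fields, i.e. $(K^{\alg},\D)\models\DCFm$. For part (3) I would again use $(\star)$. Largeness is a first-order (indeed $\forall\exists$) property, so it remains to express ``$(F^{\alg},\D)\models\DCFm$'' as a first-order scheme on $(F,\D)$ in $\L_{\D}$. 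Using quantifier elimination and elimination of imaginaries for $\DCFm$ together with the uniform degree bounds recalled in Section~\ref{preldiff} (Fact~\ref{bounds} and Corollary~\ref{rosenchar}) -- the same definability mechanism used earlier to axiomatise $\GDCF$ -- each instance of the $\DCFm$-axioms, read over the algebraic closure, becomes a first-order condition on the relevant coefficients, hence on $(F,\D)$; assembling these yields the axiomatisation.

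Part (4) is of a different flavour. From $K\preccurlyeq L$ as fields one gets that $K$ is existentially closed in $L$ as a field, so the transfer definition of differential largeness for $K$ yields that $(K,\D)$ is existentially closed in $(L,\D)$. The remaining task is to upgrade existential closedness to full elementarity. Here the hypothesis that $\theo(K)$ in $\L_{rings}$ with constants for $K$ is model-complete is used to reduce an arbitrary $\L_{\D}$-formula, over $K$ and phrased on the prolongation (jet) variables, to an existential field formula, to which $K\preccurlyeq L$ applies, while differential largeness supplies the genuinely differential existential witnesses. This makes the $\L_{\D}(K)$-theory of $(K,\D)$ model-complete, whence $(K,\D)\preccurlyeq(L,\D)$.

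The main obstacle is the characterisation $(\star)$, specifically its nontrivial (``if'') direction: given a point of a differential variety defined over $F^{\alg}$, one must descend it to an $F$-rational point, which requires producing a smooth $F$-rational point on the associated algebraic prolongation and then invoking largeness of $F$. This geometric argument (via prolongation spaces and smooth points) is the substantive input from \cite{OmarMarcus}, and the analogous reduction underlies the upgrade step in part (4) as well; everything else is a bookkeeping of definitions and the degree bounds already available in Section~\ref{preldiff}.
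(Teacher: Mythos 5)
The first thing to say is that the paper does not prove Fact~\ref{difflarge} at all: it is explicitly a compilation of results imported from \cite{OmarMarcus}, so the only meaningful comparison is against the arguments actually given there. Measured that way, several pieces of your proposal are sound and do match the source in spirit: the restriction-of-scalars argument for item (1) in the finite case (writing a system over $K'$ in a $K$-basis and using $K'((\bar t))=K'\otimes_K K((\bar t))$) is essentially the right mechanism; the deduction of item (2) from item (1) plus the observation that an algebraically closed field is existentially closed in every field extension is correct and clean; and the first half of item (4) (getting that $(K,\D)$ is existentially closed in $(L,\D)$ from $K\preccurlyeq L$ via the transfer form of differential largeness) is fine.

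The genuine gap is your central characterisation $(\star)$, specifically its ``if'' direction: that a large field $(F,\D)$ with $(F^{\alg},\D)\models\DCFm$ is differentially large. This is not the characterisation used in \cite{OmarMarcus} (which works with a geometric criterion on $D$-varieties and their prolongations, equivalently with Tressl's uniform companion axioms; it is ``large $+$ UC'' that makes item (3) first-order, and the model-completeness of $T\cup\mathrm{UC}$ that drives item (4)), and your sketch of how to prove it does not close. Knowing that $(F^{\alg},\D)\models\DCFm$ gives differential points of a $D$-variety with coordinates in $F^{\alg}$, and largeness of $F$ lets you pass from one smooth $F$-point of an \emph{algebraic} variety to a Zariski-dense set of $F$-points of that algebraic variety; but the locus of differential points inside the prolongation is not an algebraic subvariety, so neither ingredient produces a point of the form $\nabla(a)$ with $a$ in $F$ itself. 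Already for $\delta y=1$ the two hypotheses give you a Zariski-dense set of $F$-points of the prolongation $\{(y,1)\}$ and a coset of the constants of $F^{\alg}$ worth of solutions in $F^{\alg}$, with no mechanism forcing their intersection with $F$ to be nonempty; the descent from $F^{\alg}$-differential points to $F$-differential points is precisely the content of differential largeness, not a consequence of largeness plus $F^{\alg}\models\DCFm$. Since items (1) (in the general algebraic case), (3), and implicitly (4) all lean on $(\star)$ in your plan, the proposal as written does not establish the Fact; to repair it you would need to replace $(\star)$ by the actual geometric/Taylor-morphism characterisation (or the uniform-companion axiomatisation) from \cite{OmarMarcus}, at which point the degree-bound bookkeeping via Fact~\ref{bounds} and Corollary~\ref{rosenchar} that you invoke for item (3) becomes the right kind of finishing move.
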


We now note the connection between differentially large fields and PAC-differential fields. We recall that any PAC-field is large \cite{Pop2014}. 

\begin{remark}\label{PACdiff}
The following appear in \cite{OmarMarcus}.
\begin{enumerate}
\item A differential field $(K,\D)$ is PAC-differential if and only if every absolutely irreducible differential variety over $K$ has a differential $K$-rational point. In the case when $m=0$ this recovers the well known result that a field $K$ is a PAC-field if and only if every absolutely irreducible algebraic variety over $K$ has a $K$-rational point.

\item A differential field $(K,\D)$ is PAC-differential if and only if $K$ is a PAC-field and $(K,\D)$ is differentially large. 

\item The class of PAC-differential fields is elementary. Namely, there is a first-order theory in the language of differential rings whose models are precisely all PAC-differential field (in particular, being PAC-differential is first-order in the sense of Definition 2.6 from \cite{DHL}). This follows from (2). Indeed, the class of PAC-fields is known to be elementary (in the ring language) and, as we pointed out in Fact \ref{difflarge}, the class of differentially large fields is elementary (in the differential ring language).
\end{enumerate}
\end{remark}

\begin{lemma}\label{PACextalg}
Any algebraic extension of a PAC-differential field is PAC-differential.
\end{lemma}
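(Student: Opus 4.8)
The plan is to reduce the lemma to the characterisation of PAC-differential fields recorded in Remark~\ref{PACdiff}(2): a differential field $(K,\D)$ is PAC-differential if and only if $K$ is a PAC-field and $(K,\D)$ is differentially large. Thus, starting from a PAC-differential field $(K,\D)$ and an algebraic extension $L/K$, it suffices to verify these two conditions separately for $(L,\D)$. Note first that, by Remark~\ref{diffaut}, the derivations $\D$ extend uniquely from $K$ to $L$, so $(L,\D)$ is a well-defined differential field extension of $(K,\D)$, and the two conditions above are the only things that need checking.

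For the field-theoretic condition, I would invoke the classical fact from field arithmetic that every separable algebraic extension of a PAC-field is again PAC; since we work in characteristic zero, every algebraic extension is separable, and hence $L$ is a PAC-field. For the differential condition, differential largeness of $(L,\D)$ is immediate from Fact~\ref{difflarge}(1), which states that any algebraic extension of a differentially large field is differentially large. Combining these with Remark~\ref{PACdiff}(2) then yields that $(L,\D)$ is PAC-differential, completing the argument.

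There is no genuine obstacle here beyond assembling the ingredients: the whole content of the lemma is carried by the equivalence in Remark~\ref{PACdiff}(2), which decouples the ``large'' and ``PAC'' aspects, together with the preservation of each aspect under algebraic extensions (Fact~\ref{difflarge}(1) for largeness, and the classical field-arithmetic result for PAC-ness). The only point one must be slightly careful about is that the derivation on $L$ is exactly the canonical one from Remark~\ref{diffaut}, so that ``algebraic extension of $(K,\D)$'' and ``algebraic extension of the underlying field $K$'' refer to compatible objects; once this is observed, the two preservation statements apply verbatim. Should one wish to avoid quoting the characterisation, an alternative is to argue directly from the geometric definition in Remark~\ref{PACdiff}(1), descending an absolutely irreducible differential variety over $L$ to one over a finite subextension and producing a point via PAC-differentiality of $K$; but since characterisation (2) is already available, the short route through largeness and classical PAC-preservation is cleaner and is the one I would take.
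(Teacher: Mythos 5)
Your proof is correct, and every ingredient you use is available in the paper: the characterisation of PAC-differential fields as ``PAC-field $+$ differentially large'' (Remark~\ref{PACdiff}(2)), the preservation of differential largeness under algebraic extensions (Fact~\ref{difflarge}(1)), the Ax--Roquette theorem for PAC-fields, and the unique extension of derivations to algebraic extensions in characteristic zero (Remark~\ref{diffaut}). The one point you flag --- that the derivation on $L$ is forced, so the differential and field-theoretic notions of ``algebraic extension'' agree --- is exactly the right thing to be careful about, and it is handled by Remark~\ref{diffaut}.

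The route is, however, different from the one the paper takes. The published proof is a one-line citation to Lemma~4.5 of \cite{Hoff3}, which is a general model-theoretic statement about PAC substructures of a stable theory (here $\DCFm$): algebraic extensions of PAC substructures are again PAC substructures. That argument works at the level of $\acl$ in the ambient monster model and does not pass through largeness at all. Your argument instead decomposes the property along the equivalence of Remark~\ref{PACdiff}(2) and verifies each half separately; it is self-contained modulo the facts already quoted in the paper and makes the differential-algebraic content visible, at the cost of relying on the nontrivial characterisation from \cite{OmarMarcus} and on Ax--Roquette. The paper's citation is shorter and generalises to PAC substructures of other stable theories where no ``largeness'' decomposition is available, but it hides the mechanism. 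Both are complete proofs; yours is the more concrete of the two.
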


\begin{proof}
By Lemma 4.5 from \cite{Hoff3}.
\end{proof}

\begin{lemma}\label{elementarydiff}
Let $(L,\D)/(K,\D)$ be a differential field extension of PAC-differential fields. If $K$ is bounded (namely, has finitely many extensions of degree $n$, for each $n\in \mathbb N$) and $K\preccurlyeq L$, then $(K,\D)\preccurlyeq (L,\D)$.
\end{lemma}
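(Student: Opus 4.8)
The plan is to deduce the statement from Fact~\ref{difflarge}(4), which is precisely the differential transfer of elementarity we are after once its hypotheses are in place. To invoke it I would need three ingredients: that both $(K,\D)$ and $(L,\D)$ are differentially large; that the field $K$ has a model-complete theory in the language of rings expanded by constants for $K$; and that $K\preccurlyeq L$ as pure fields. The last of these is exactly our hypothesis, so all the real work lies in securing the first two.

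For differential largeness I would appeal to Remark~\ref{PACdiff}(2): a differential field is PAC-differential if and only if its underlying field is a PAC field and the differential field is differentially large. Applying this to $(K,\D)$ and to $(L,\D)$ simultaneously shows that $K$ and $L$ are PAC fields and that $(K,\D)$ and $(L,\D)$ are differentially large, so the differential-largeness ingredient comes for free and no further computation is needed.

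The heart of the matter is the second ingredient: that the theory of $K$ in the language of rings with a constant for each element of $K$ is model-complete. This is where boundedness is essential and where the genuine content sits. Since $K$ is a bounded PAC field of characteristic zero (hence perfect), I would invoke the model-completeness of the theory of bounded PAC fields established in Section~4 of \cite{ChPi}; boundedness enters through the absolute Galois group of $K$ being small, which is what forces the restriction maps governing elementary extensions of PAC fields to be isomorphisms. I expect this to be the main obstacle, in the sense that it is the one input that cannot be bypassed and that genuinely consumes the boundedness hypothesis, whereas everything else is bookkeeping. With model-completeness of the theory of $K$ (over $K$) in hand, together with the given $K\preccurlyeq L$ and the differential largeness of both fields, Fact~\ref{difflarge}(4) applies directly and yields $(K,\D)\preccurlyeq(L,\D)$, as required.
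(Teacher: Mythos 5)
Your proposal is correct and follows essentially the same route as the paper: extract differential largeness of both fields from Remark~\ref{PACdiff}(2), get model-completeness of $\operatorname{Th}(K)$ in the ring language with constants for $K$ from boundedness via Section 4 of \cite{ChPi}, and conclude with Fact~\ref{difflarge}(4). Your write-up is in fact slightly more explicit than the paper's about why both $(K,\D)$ and $(L,\D)$ are differentially large, but the argument is identical.
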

\begin{proof}
By Remark \ref{PACdiff}, $K$ is a PAC-field. Since $K$ is bounded, by \cite[Section 4.6]{ChPi}, the theory of $K$ in the ring language expanded by $K$ is model-complete (recall that $K$ is of characteristic zero). The result now follows by (4) of Fact \ref{difflarge}.
\end{proof}

\subsection{On the theory $\GDCF$ and its reducts}

\noindent
Let $(F,\Delta,\Sigma)$ be a monster model of $\GDCF$ embedded in the monster model $(\U,\Delta)$ of $\DCFm$. 
If $A\subseteq F$, we let
$G\cdot A:=\{\sigma_g(a)\;|\;a\in A,\,g\in G\}$.
By the results of \cite{Hoff3}, 
the following holds
\begin{enumerate}
\item $\acl^{(F,\Delta, \Sigma)}(A)=\acl^{(\U,\Delta)}(G\cdot A)\cap F=\langle G\cdot A\rangle^{\alg}\cap F$,
    \item if $(K,\Delta,\Sigma)$ and $(L,\Delta',\Sigma')$ are models of $\GDCF$ with a common substructure $E$ such that $E=\acl^{(K,\Delta,\Sigma)}(E)=\acl^{(L,\Delta',\Sigma')}(E)$, then $$(K,\Delta,\Sigma)\equiv_E (L,\Delta',\Sigma'),$$ 
    \item $\GDCF$ has semi-quantifier elimination (similarly to ACFA, see Remark 4.13 in \cite{Hoff3}); namely, every formula is equivalent to a formula of the form $\exists y\; \phi(x,y)$ where $\phi$ is quantifier-free and $\phi(a,b)$ implies $b\in \acl^{(F,\D,\S)}(a)$,
    \item $\GDCF$ codes finite sets and has geometric elimination of imaginaries,
    \item $\GDCF$ is supersimple and we have the following description of the forking independence:
    let $A,B,C$ be subsets of $F$, then
\begin{IEEEeqnarray*}{rCl}
A\ind_C^{\GDCF} B &\qquad\iff\qquad & G\cdot A\ind_{G\cdot C}^{\DCFm}G\cdot B \\
&\qquad\iff\qquad & \langle G\cdot A\cup G\cdot C\rangle\text{ is free from }
\langle G\cdot B\cup G\cdot C\rangle \\
& &\text{ over }\langle G\cdot C\rangle
\end{IEEEeqnarray*}    
where $\ind^{\GDCF}$ is the forking independence relation in $\GDCF$,
    
    \item if $(K,\Delta,\Sigma)\models\GDCF$ then $(K^G,\Delta)$ is supersimple, where $K^G:=\{a\in K\;:\;\sigma_g(a)=a\text{ for all }g\in G\}$ is the subfield of $\S$-invariants.
\end{enumerate}

\begin{remark}\label{remark:completeness}
Actually, the second item above has a slightly stronger (asymmetric) version, which is Fact 4.7 from \cite{Hoff3}.
This stronger version will be used in Lemma~\ref{lemma:reducts} below, so we state it adapted to our setting. Assume that $(K,\Delta,\Sigma)$ and  $(L,\Delta',\Sigma')$ are models of $\GDCF$ with $(K,\D)$ and $(L,\D')$ contained in the monster model $(\U,\D)\models \DCFm$. 
Let $E\subseteq K\cap L$. If $E=\langle G\cdot E\rangle^{\alg}\cap K$ 
and $\Sigma|_E=\Sigma'|_E$, then 
$(K,\Delta,\Sigma)\equiv_E (L,\Delta',\Sigma')$.
\end{remark}

We now look at the different reducts of a model of $\GDCF$ when looking at the various (sub-)fields of constants. Recall that if $(K,\Delta)$ is a differential field and $\Pi\subseteq \Delta$, then $K^\Pi$ denotes the subfield of $\Pi$-constants of $K$ (i.e., $K^\Pi=\ker{\Pi}$). Since the derivations commute, the derivations $\mathcal D:=\Delta\setminus \Pi$ on $K$ restrict to derivations on $K^\Pi$, and so $(K^\Pi,\mathcal D, \S)$ is a $G$-differential subfield of $(K,\mathcal D, \S)$.

\begin{lemma}\label{lemma:reducts}
Let $(K,\Delta,\Sigma)\models \GDCF$ and $\Pi\subseteq \Delta$. Set $\mathcal D=\Delta\setminus\Pi$ and $r=|\mathcal D|$. Then, $(K,\mathcal D,\Sigma)$ and  $(K^\Pi, \mathcal D,\Sigma)$ are models of $G$-DCF$_{0,r}$.
\end{lemma}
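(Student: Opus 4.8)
The plan is to reduce everything to existential closedness and then feed a suitable extension into the axioms of the model companion. Recall from Section \ref{companion} (applied with $r$ commuting derivations in place of $m$) that a model of $G\operatorname{-DF}_{0,r}$ lies in $G\operatorname{-DCF}_{0,r}$ precisely when it is existentially closed among $G$-differential fields with the $r$ derivations $\mathcal D$. Since $\mathcal D$ commutes with $\Pi$ and each $\sigma_g$ commutes with every derivation, both $(K,\mathcal D,\Sigma)$ and $(K^\Pi,\mathcal D,\Sigma)$ are $G$-differential fields in the $r$ derivations $\mathcal D$ (this was already noted just before the statement). So it suffices to show each is existentially closed among such, and in both cases I would exploit that $(K,\Delta,\Sigma)\models\GDCF$ is existentially closed in $\GDF$ with the full set $\Delta$.

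For $(K,\mathcal D,\Sigma)$: let $\varphi(x)$ be a quantifier-free $\mathcal L_{\mathcal D,\Sigma}$-formula over $K$ with a realisation $b$ in some $G$-differential field extension $(M,\mathcal D,\Sigma)$ of $(K,\mathcal D,\Sigma)$, which I may take to be $\mathcal D$-$G$-finitely generated over $K$ by $b$ and its $G$-translates. The key step is to extend the derivations of $\Pi$ from $K$ to $M$ (enlarging $M$ to a $\mathcal D$-field extension if necessary) as a commuting family that also commutes with $\mathcal D$ and with the $G$-action; this yields a $G$-differential field $(M',\Delta,\Sigma)$ in $m$ derivations extending $(K,\Delta,\Sigma)$. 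Since $\mathcal D\subseteq\Delta$, the formula $\varphi$ is quantifier-free in $\mathcal L_{\Delta,\Sigma}$ and is realised in $M'\models\GDF$, so existential closedness of $(K,\Delta,\Sigma)$ in $\GDF$ produces a realisation in $K$, as required.

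For $(K^\Pi,\mathcal D,\Sigma)$ I would argue in the same spirit but force the realisation to be $\Pi$-constant. Let $\varphi(x)$ be a quantifier-free $\mathcal L_{\mathcal D,\Sigma}$-formula over $K^\Pi$ realised by $b$ in a $G$-$\mathcal D$-field extension $(M,\mathcal D,\Sigma)$ of $(K^\Pi,\mathcal D,\Sigma)$, taken $\mathcal D$-$G$-finitely generated over $K^\Pi$. Realise the $\mathcal D$-$G$-locus of $b$ over $K^\Pi$ so that $M$ is free from $K$ over $K^\Pi$, and equip the compositum $K\cdot M$ with its $\mathcal D$-structure. Because the derivations of $\Pi$ vanish on $K^\Pi$, I may extend each $\delta\in\Pi$ by gluing the given derivation on $K$ to the \emph{zero} derivation on $M$ (i.e. declaring every $\mathcal D$-$G$-generator coming from $M$ to be $\Pi$-constant): linear disjointness over $K^\Pi$ guarantees a well-defined extension to $K\cdot M$, and the commutators $[\delta,d]$ (for $d\in\mathcal D$) and $[\delta,\delta']$ are derivations vanishing on both $K$ and $M$, hence vanish on $K\cdot M$, so the whole family commutes. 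The resulting $G$-differential field $(M',\Delta,\Sigma)$ in $m$ derivations extends $(K,\Delta,\Sigma)$ and makes $b$ a $\Pi$-constant point. Now the quantifier-free $\mathcal L_{\Delta,\Sigma}$-formula $\varphi(x)\wedge\bigwedge_{\delta\in\Pi}\delta(x)=0$ is realised in $M'$, so existential closedness of $(K,\Delta,\Sigma)$ gives $a\in K$ with $\delta(a)=0$ for all $\delta\in\Pi$, i.e. $a\in K^\Pi$, and $K^\Pi\models\varphi(a)$.

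The main obstacle is the extension step in the first part: extending the derivations of $\Pi$ to a $\mathcal D$-field extension of $M$ as a commuting family commuting with $\mathcal D$ and with the $G$-action. (In the second part this is easy, since the $\Pi$-derivations are taken to be $0$ on the new part and the commutator argument closes the gap.) In general this relies on the standard differential-algebra fact that in characteristic zero a family of commuting derivations extends along any differential field extension, possibly after enlarging it, while preserving commutativity (cf.\ Remark \ref{diffaut} and \cite[Chapter~II]{kol1}); the only extra care needed is to perform the extension $G$-equivariantly, which is possible because the $\mathcal D$-$G$-generators of $M$ over $K$ occur in $G$-orbits and one prescribes $\delta$ on a single orbit representative and propagates by $\sigma_g\delta=\delta\sigma_g$.
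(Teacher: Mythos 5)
Your second half (the claim about $(K^\Pi,\mathcal D,\Sigma)$) is correct and takes a genuinely different route from the paper. You amalgamate $(M,\mathcal D,\Sigma)$ with $(K,\Delta,\Sigma)$ directly over $K^\Pi$ (the tensor product $K\otimes_{K^\Pi}M$ is indeed a domain, because a field of constants in characteristic zero is relatively algebraically closed, so $K/K^\Pi$ is regular) and extend each $\delta\in\Pi$ by zero on the $M$-side; the commutator argument you give does close all the gaps, including $G$-equivariance, since $\sigma_g\delta\sigma_g^{-1}-\delta$ is a derivation vanishing on both factors. The paper instead equips $L$ with trivial derivations $\Pi'$, embeds the result into a model of $\GDCF$, and invokes the completeness result over relatively algebraically closed substructures imported from \cite{Hoff3} (Remark \ref{remark:completeness}). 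Your version is more self-contained; the paper's version avoids any amalgamation of differential structures.

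The first half, however, has a genuine gap at exactly the point you flag as the main obstacle. The assertion that the derivations of $\Pi$ extend from $K$ to (an enlargement of) the $\mathcal D$-$G$-field $M$ so as to commute with $\mathcal D$, with each other, and with $\Sigma$ is the whole content of that case, and neither of your justifications establishes it. Remark \ref{diffaut} concerns algebraic extensions only, while $M/K$ is typically transcendental. The orbit-propagation heuristic does not address the real constraint: once $\delta(b)$ is prescribed for a generator $b$, commutativity forces $\delta(\theta b)=\theta(\delta(b))$ for every $\mathcal D$-derivative $\theta b$, and these infinitely many values must be compatible with all algebraic relations the $\theta b$'s satisfy over $K$; it also breaks down when an orbit has a nontrivial stabilizer, which forces $\delta(b)$ to be fixed by that stabilizer. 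The correct tool is Proposition 8 of \cite[Chapter 0, Section 6]{Kolchin2}: the radical $\Delta$-ideal $Q$ of $K\{\bar x\}_\Delta$ generated by the $G$-invariant prime $\mathcal D$-ideal $P$ of $\bar\sigma(b)$ over $K$ is prime with $Q\cap K\{\bar x\}_{\mathcal D}=P$, so $\Frac(K\{\bar x\}_\Delta/Q)$ is the desired $G$-$\Delta$-extension. This is precisely the paper's argument, except that the paper never builds the field extension at all: it stays at the level of ideals and verifies the axiom scheme ($\dagger$) of Remark \ref{rem:axioms} for $(K,\mathcal D,\Sigma)$ directly.
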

\begin{proof}
To prove that $(K,\mathcal D, G)\models \DCF_{0,r}$ we use the characterisation of $G$-DCF$_{0,r}$ given in Remark~\ref{rem:axioms}. Let $P$ be a $G$-invariant prime differential ideal of $K\{\bar x\}_{\mathcal D}$. Here, to avoid confusion, we use the subscript $\mathcal D$ to denote the ring of $\mathcal D$-differential polynomials. Also, let $f\in K\{\bar x\}_{\mathcal D}\setminus P$. Now let $Q$ be the radical differential ideal of $K\{\bar x\}_\Delta$ generated by $P$. Since $\Sigma$ consists of $\Delta$-automorphisms, one readily checks that $Q$ is $G$-invariant. Furthermore, by Proposition 8 of \cite[Chapter 0, Section 6]{Kolchin2}, $Q$ is prime and 
$$P=Q\cap K\{\bar x\}_{\mathcal D}.$$
It follows that $f$ is not in  $Q$. Since $(K,\Delta,\Sigma)\models \GDCF$, there is a tuple $a$ from $K$ such that 
$$\bar\sigma(a)\in \V_K^{\Delta}(Q)\setminus \V_K^\Delta(f).$$
Since $P\subseteq Q$, it readily follows that
$$\bar\sigma(a)\in \V_K^{\mathcal D}(P)\setminus \V_K^{\mathcal D}(f).$$
Thus, $(K,\mathcal D,\Sigma)\models G$-DCF$_{0,r}$.

\medskip

To prove that $(K^\Pi, \mathcal D,\Sigma)$ models $G$-DCF$_{0,r}$ we show that it is an existentially closed model of $G$-DF$_{0,r}$. Let $\phi(x)$ be a quantifier free $\mathcal L_{\mathcal D,\Sigma}$-formula over $K^{\Pi}$ with a realisation in some $(L,\mathcal D', \Sigma')$ extending $(K^\Pi,\mathcal D, \Sigma)$. Let $\Delta'=\mathcal D'\cup \Pi'$ where $\Pi'=\{\pi'_1,\dots,\pi'_{m-r}\}$ and each $\pi'_i$ is the trivial derivation on $L$. Then $(L,\Delta',\Sigma')$ is a model of $G$-DF$_{0,m}$. Now let $(F,\Delta', \Sigma')$ be a model of $\GDCF$ extending $(L,\Delta',\Sigma')$. We note that $(F,\Delta', \Sigma')$ has a realisation of $\phi(x)$ that vanishes in all the elements of $\Pi'$. Also, note that $K^{\Pi}$ is a common substructure of $(K,\Delta,\Sigma)$ and $(F,\Delta',\Sigma')$. Moreover, $K^\Pi=\acl^{(K,\Delta,\Sigma)}(K^\Pi)$ since $K^{\Pi}$ is relatively algebraically closed in $K$ (this property holds for any subfield of constants). By Remark \ref{remark:completeness}, it follows that $$(K,\Delta,\Sigma)\equiv_{K^\Pi}(F,\Delta',\Sigma'),$$ 
and hence, since $F$ has a realisation of $\phi(x)$ that vanishes in all the elements of $\Pi'$, there is a realisation of $\phi(x)$ in $K$ which vanishes in all of the elements of $\Pi$. Thus, this latter is a realisation of $\phi(x)$ in $K^{\Pi}$, as desired. 
\end{proof}

We recall that the model-companion of the theory of $G$-fields (in arbitrary characteristic) was explored in \cite{HoffKow2018} and denoted by $\GTCF$. Note that in our terminology, if we set $m=0$, then the models of $G\operatorname{-DCF_{0,0}}$ are precisely the models of $\GTCF$ of characteristic zero. Thus, setting $\Pi=\D$, the lemma above yields.

\begin{corollary}
Let $(K,\D,\S)$ be a model of $\GDCF$. Then $(K,\S)$ and 
$(K^\D,\S)$ are models of $G\operatorname{-TCF}$. 
\end{corollary}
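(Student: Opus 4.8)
The plan is to deduce the corollary directly from Lemma~\ref{lemma:reducts} by specializing the set of omitted derivations. Recall that in Lemma~\ref{lemma:reducts} we are given $(K,\Delta,\Sigma)\models\GDCF$ and an arbitrary subset $\Pi\subseteq\Delta$, and we conclude that both $(K,\mathcal D,\Sigma)$ and $(K^\Pi,\mathcal D,\Sigma)$ are models of $G\text{-DCF}_{0,r}$ where $\mathcal D=\Delta\setminus\Pi$ and $r=|\mathcal D|$. The observation driving the proof is purely bookkeeping: if we take $\Pi=\Delta$ in its entirety, then $\mathcal D=\Delta\setminus\Delta=\emptyset$, so $r=0$ and $K^\Pi=K^\Delta$, the field of $\Delta$-constants.

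First I would invoke the remark made just before the corollary that, in this paper's indexing, the models of $G\text{-DCF}_{0,0}$ are exactly the characteristic-zero models of $\GTCF$ (the theory of existentially closed fields with a $G$-action). This is the identification that lets us read ``$G\text{-DCF}_{0,r}$ with $r=0$'' as ``$\GTCF$.'' Then I would apply Lemma~\ref{lemma:reducts} with $\Pi=\Delta$. Its first conclusion gives that $(K,\mathcal D,\Sigma)=(K,\emptyset,\Sigma)$ is a model of $G\text{-DCF}_{0,0}$; discarding the (now empty) derivation symbols, this is precisely $(K,\Sigma)\models\GTCF$. Its second conclusion gives that $(K^\Delta,\emptyset,\Sigma)\models G\text{-DCF}_{0,0}$, i.e.\ $(K^\Delta,\Sigma)\models\GTCF$.

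Since this is an immediate specialization of an already-proved lemma, there is essentially no obstacle: the only point requiring a word of care is the harmless notational matching between the language $\mathcal L_{\mathcal D,\Sigma}$ with $\mathcal D=\emptyset$ and the plain $G$-field language $\mathcal L_{rings}\cup\Sigma$, together with the convention identifying $G\text{-DCF}_{0,0}$ in characteristic zero with $\GTCF$. Everything else is inherited verbatim from Lemma~\ref{lemma:reducts}, so the corollary follows.
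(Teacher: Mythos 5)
Your proposal is correct and is exactly the paper's argument: the corollary is obtained by applying Lemma~\ref{lemma:reducts} with $\Pi=\Delta$ (so $\mathcal D=\emptyset$, $r=0$, and $K^\Pi=K^\Delta$) and invoking the stated identification of $G\operatorname{-DCF}_{0,0}$ in characteristic zero with $\GTCF$. No differences worth noting.
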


This corollary allows us to use the results from \cite{HoffKow2018} to conclude several field-theoretic properties about models of $\GDCF$ and their subfields of invariants. We collect some of these in the following corollary; but first some definitions.

\begin{definition}
\begin{enumerate}
    \item A $G$-field $(K,\Sigma)$ is strict if $[K:K^G]=|G|$. One easily checks that being strict is equivalent to $\Sigma=\aut(K/K^G)$. And it is also equivalent to the action of $G$ on $K$ being faithful.
\item A $G$-field $(K,\S)$ is algebraically $G$-closed if the difference $G$-structure on $K$ cannot be extended to any proper algebraic extension of $K$.
\item Given a field, the absolute Galois group, denoted $\gal(K)$, is the group $\aut(K^{\alg}/K)$.
\item A field is bounded if it has finitely many extensions of degree $n$, for each $n\in\mathbb N$. Equivalently, $\gal(K)$ is small (as profinite group).
\end{enumerate}
\end{definition}

\begin{corollary}\label{fromTCF}
Let $(K,\D,\S)\models \GDCF$. Then,
\begin{enumerate}
    \item $K$ and $K^G$ are bounded PAC-fields but not pseudo-finite.
    \item $(K,\S)$ is strict and algebraically $G$-closed,
    \item The natural restriction map
    $$\gal(K^G)\to G$$
    is the universal Frattini cover of $G$.
    \item If $G$ is nontrivial, then $K$ is not algebraically closed. The measure of how far $K$ is from $K^{\alg}$ is given given by the kernel of the universal Frattini cover in (3). Indeed, 
    $$\gal(K)\cong \ker (\gal(K^G)\to G).$$
    \item If $G$ is trivial, then ${\mathcal Mod}(\GDCF)={\mathcal Mod}(\DCFm)$. Indeed, when $G$ is trivial, referring to our axioms of $\GDCF$ above, any prime differential ideal will be $G$-invariant, and hence, in this case, our axioms of $\GDCF$ yield the natural axioms of $\DCFm$ (cf. \cite{McGrail} or \cite{Tre2005}).
\end{enumerate}
\end{corollary}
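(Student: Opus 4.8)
The plan is to obtain all five items by transferring the corresponding facts about $\GTCF$ from \cite{HoffKow2018} through the preceding Corollary (which tells us $(K,\S)$ and $(K^\D,\S)$ are models of $\GTCF$), and combining them with the differential-largeness/PAC-differential machinery already set up. Since the characteristic-zero models of $G\operatorname{-DCF}_{0,0}$ are exactly the characteristic-zero models of $\GTCF$, the ``pure field'' reduct $(K,\S)$ carries all the field-theoretic structure proved in \cite{HoffKow2018}, and each of (1)--(4) is a statement purely about this reduct. So the strategy is: first extract the needed $\GTCF$-facts, then handle the one genuinely differential claim (that $K$ and $K^G$ are \emph{PAC-differential}, not merely PAC as fields), and finally dispatch the trivial-group case (5) by inspection of the axioms.

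For item (2), strictness and algebraic $G$-closedness are established for models of $\GTCF$ in \cite{HoffKow2018}; I would simply cite the corresponding result applied to $(K,\S)$. Items (3) and (4) are the Galois-theoretic heart of \cite{HoffKow2018}: that the restriction $\gal(K^G)\to G$ is the universal Frattini cover, and that $\gal(K)$ is isomorphic to its kernel. These follow verbatim from the $\GTCF$-theory once one notes (via Remark \ref{diffaut}) that passing to algebraic extensions does not interfere with the differential structure, so that $\aut(K^{\alg}/K)=\aut_\D(K^{\alg}/K)$ and the absolute Galois group is computed on the pure field. Boundedness of $K$ and $K^G$ in item (1) is immediate from (3): the universal Frattini cover of a finite group is a small profinite group, so $\gal(K^G)$ is small, hence $K^G$ is bounded; and $\gal(K)$ is a closed subgroup (the kernel in (4)) of a small group, hence small, so $K$ is bounded. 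Non-pseudofiniteness for $G$ nontrivial follows because pseudofinite fields have procyclic absolute Galois group, whereas the universal Frattini cover of a nontrivial finite group is not procyclic; for $G$ trivial the claim is the classical fact that $\GTCF$-models (i.e. PAC fields of this type) are not pseudofinite here, which I would again cite from \cite{HoffKow2018}.

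The step I expect to require the most care is the \emph{differential} upgrade in item (1): knowing $K$ is a PAC-field does not by itself give that $(K,\D)$ is PAC-differential. Here I would invoke item (2) of Remark \ref{PACdiff}, which says $(K,\D)$ is PAC-differential iff $K$ is a PAC-field and $(K,\D)$ is differentially large. Since $(K,\D,\S)\models\GDCF$, its reduct $(K,\D)$ is (by Lemma \ref{lemma:reducts} with $\Pi=\emptyset$, or directly since $\GDCF$-models are existentially closed) a model of $\DCFm$ after passing to $K^{\alg}$, and differential largeness of $(K,\D)$ follows from the existentially-closed nature of models of $\GDCF$ together with Fact \ref{difflarge}. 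The analogous statement for $(K^G,\D)$ uses the Corollary's conclusion that $(K^G,\D)$ is supersimple together with the same PAC-differential characterisation; I would verify $K^G$ is a PAC-field from $(K^G,\S)\models\GTCF$ and differential largeness of $(K^G,\D)$ from the model-theoretic description of the reducts.

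Finally, for item (5), when $G$ is trivial the automorphism symbols are interpreted as the identity and every prime differential ideal of $K\{\bar x\}$ is automatically $G$-invariant, so the axiom scheme $(\#)$ (equivalently $(\dagger)$, see Remark \ref{rem:axioms}) reduces exactly to the geometric axioms characterising $\DCFm$; hence ${\mathcal Mod}(\GDCF)={\mathcal Mod}(\DCFm)$, which I would record by comparing the two axiom schemes directly.
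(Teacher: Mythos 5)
Your overall strategy coincides with the paper's: the paper gives no separate proof of this corollary, stating only that it is obtained by applying the results of \cite{HoffKow2018} to the reduct $(K,\S)\models\GTCF$ furnished by the preceding corollary (using Remark \ref{diffaut} to identify the field-theoretic and differential absolute Galois groups), and disposing of the trivial-group case by inspecting the axioms. That is exactly your plan, so the decomposition is right. One remark: item (1) only asserts that $K$ and $K^G$ are bounded PAC \emph{fields}, not PAC-differential fields; the differential upgrade you spend the most effort on is not needed here (it is the content of the later Proposition \ref{strongPACdiff}, which the paper proves by a direct argument with $G$-invariant prime differential ideals rather than by the route you sketch).

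Two of your supplementary justifications are incorrect as stated, even though the conclusions hold. First, your argument for non-pseudofiniteness -- that ``the universal Frattini cover of a nontrivial finite group is not procyclic'' -- fails for $G=\mathbb Z/p\mathbb Z$, whose universal Frattini cover is $\mathbb Z_p$, a procyclic group; the correct point is that $\gal(K^G)$ (and likewise $\gal(K)$, which can also be procyclic, e.g.\ $p\mathbb Z_p\cong\mathbb Z_p$) is not isomorphic to $\widehat{\mathbb Z}$, since for instance $\widetilde G/\Phi(\widetilde G)\cong G/\Phi(G)$ is finite while $\widehat{\mathbb Z}/\Phi(\widehat{\mathbb Z})$ is not. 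Second, boundedness of $K$ does not follow from $\gal(K)$ being a \emph{closed} subgroup of the small group $\gal(K^G)$: closed subgroups of small profinite groups need not be small (e.g.\ infinitely generated closed subgroups of a free profinite group of rank $2$). What saves you is that by strictness $[K:K^G]=|G|$ is finite, so $\gal(K)$ is an \emph{open} subgroup of $\gal(K^G)$, and open subgroups of small profinite groups are small. Both slips are easily repaired, and with those corrections the proposal is a faithful expansion of the paper's intended argument.
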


The following extends to the differential context the notion of $K$-strongly PAC-field from \cite{HoffKow2018}.

\begin{definition}
Let $(K,\D)/(F,\D)$ be a differential field extension. We say that $(F,\D)$ is $K$-strongly PAC-differential if every differential variety over $F$ that is $K$-irreducible has a differential $F$-rational point. Of course, this notion implies PAC-differential.
\end{definition}

\begin{proposition}\label{strongPACdiff}
If $(K,\D,\S)$ is a model of $\GDCF$, then $(K^G,\D)$ is $K$-strongly PAC-differential. In particular, 
\begin{enumerate}
    \item [(i)] $(K,\D)$ and $(K^G,\D)$ are both PAC-differential (and hence differentially large by (2) of Remark \ref{PACdiff}).
    \item [(ii)] $(K^{\alg}, \D)$ and $((K^G)^{\alg},\D)$ are models of $\DCFm$.
\end{enumerate}
\end{proposition}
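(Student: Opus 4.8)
The plan is to prove the main assertion---that $(K^G,\D)$ is $K$-strongly PAC-differential---directly from the axiom scheme of $\GDCF$, and then read off (i) and (ii) using the general facts about PAC-differential and differentially large fields collected above.

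First I would set up the variety side. Let $V$ be a differential variety over $K^G$ that is $K$-irreducible, let $y=(y_1,\dots,y_n)$, and let $I(V)\subseteq K^G\{y\}$ be its (prime differential) defining ideal. The key construction is to encode the diagonal copy of $V$ inside $K\{\bar x\}$, where $\bar x=({\bf x}_{g_1},\dots,{\bf x}_{g_\ell})$ and, without loss of generality, $g_1=e$. Define $P\trianglelefteqslant K\{\bar x\}$ to be the differential ideal generated by the ``diagonal'' forms $x_{g_i,j}-x_{g_1,j}$ (for all $i,j$) together with the elements $f({\bf x}_{g_1})$ for $f\in I(V)$. Quotienting out the diagonal forms identifies every block with the first one, so that $K\{\bar x\}/P\cong K\{y\}/\bigl(I(V)\cdot K\{y\}\bigr)$; since $V$ is $K$-irreducible the latter is a differential domain, whence $P$ is a prime differential ideal.

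The next step is to check that $P$ is $G$-invariant, which is exactly what is needed to feed $P$ into the axioms. For the diagonal generators, $\sigma_g(x_{g_i,j}-x_{g_1,j})=x_{gg_i,j}-x_{gg_1,j}$, which lies in $P$ as a difference of two diagonal forms. For the variety generators, since the coefficients of $f\in I(V)$ lie in $K^G$ and are therefore $\Sigma$-fixed, $\sigma_g(f({\bf x}_{g_1}))=f({\bf x}_{gg_1})$, and this is congruent to $f({\bf x}_{g_1})=0$ modulo the diagonal forms, hence lies in $P$. Thus $\sigma_g(P)\subseteq P$ for every $g\in G$. Now I would apply condition $(\dagger)$ of Remark~\ref{rem:axioms} to $P$ and $f=1\notin P$: there is $a\in K^n$ with $\bar\sigma(a)\in \V_K^\D(P)$. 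The diagonal forms force $\sigma_{g_i}(a)=\sigma_{g_1}(a)=a$ for all $i$, so $a\in (K^G)^n$, while the generators $f({\bf x}_{g_1})$ force $f(a)=0$ for all $f\in I(V)$, so $a\in V$. This produces the desired differential $K^G$-rational point, establishing that $(K^G,\D)$ is $K$-strongly PAC-differential.

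Finally I would deduce (i) and (ii). Every absolutely irreducible differential variety over $K^G$ is in particular $K$-irreducible, so the previous paragraph gives it a $K^G$-point; by the characterisation in Remark~\ref{PACdiff}(1), $(K^G,\D)$ is PAC-differential. Since $G$ is finite, $K/K^G$ is a (finite) algebraic extension, so Lemma~\ref{PACextalg} upgrades this to $(K,\D)$ being PAC-differential as well, giving (i). For (ii), PAC-differential fields are differentially large by Remark~\ref{PACdiff}(2), and by Fact~\ref{difflarge}(2) the algebraic closure of a differentially large field is a model of $\DCFm$; applying this to $K$ and to $K^G$ yields that $(K^{\alg},\D)$ and $((K^G)^{\alg},\D)$ are models of $\DCFm$. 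The only real subtlety I expect is the verification that the diagonal ideal $P$ is simultaneously prime and $G$-invariant (in particular tracking the block structure under $\bar\sigma$ and the use of $K$-irreducibility rather than mere irreducibility over $K^G$); once that is in place the rest is a direct application of the recorded facts.
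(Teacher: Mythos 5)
Your proposal is correct and follows essentially the same route as the paper: form the prime $G$-invariant differential ideal in $K\{\bar x\}$ generated by the defining ideal of $V$ on the first block together with the diagonal forms ${\bf x}_{g_1}-{\bf x}_{g_i}$, apply the axioms of $\GDCF$ to produce a point $\bar\sigma(a)$ which the diagonal relations force into $(K^G)^n$, and then deduce (i) and (ii) from Remark \ref{PACdiff}, Lemma \ref{PACextalg}, and Fact \ref{difflarge}(2). The only (cosmetic) difference is that you generate from the ideal over $K^G$ and use $K$-irreducibility to get primality of the base change, whereas the paper starts directly from the vanishing ideal $\mathcal I_\D(V/K)$ over $K$; both checks go through.
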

\begin{proof}
Let $V$ be a differential variety over $K^G$ which is $K$-irreducible. We may assume that $V$ is affine contained in $\U^n$. Then, the differential vanishing ideal $P=\mathcal I_\D(V/K)$ of $V$ in $K\{x\}$, with $x=(x_1,\dots,x_n)$, is prime. Now, as in Section \ref{companion}, we let $\bar x=({\bf x}_{g_1}, \dots,{\bf x}_{g_\ell})$ and regard $x$ as ${\bf x}_{g_1}$. Then, the  differential ideal $Q$ of $K\{\bar x\}$ differentially-generated by $P$ and ${\bf x}_{g_1}- {\bf x}_{g_i}$, for $i=2,\dots,\ell$, is a $G$-invariant prime differential ideal. by the axioms of $\GDCF$, we there is $a\in K$ such that
$$\bar\sigma(a)\in \V_K^\D(Q).$$
By the equations defining $Q$ we have that $a\in V$ and $a$ is a tuple from $K^G$, as desired. 

It now also follows that $(K^G,\D)$ is PAC-differential. Since $K$ is an algebraic extension of $K^G$, Lemma \ref{PACextalg} implies that $(K,\D)$ is also PAC-differential. This show (i) of the 'in particular' clause. For (ii) one can simply invoke (2) of Fact~\ref{difflarge}, since PAC-differential fields are differentially large (see (2) of Remark~\ref{PACdiff}).
\end{proof}

We now aim to characterise those $G$-differential fields that are models of the theory $\GDCF$ in terms of properties satisfied by the (differential) subfield of invariants. The following is one of the key ingredients to achieve this. 

\begin{proposition}\label{elext}
Suppose $(K,\D,\S)$ and $(L,\D,\S)$ are $G$-differential fields with $(K,\D,\S)\subseteq (L,\D,\S)$. Assume that $(K,\S)$ and $(L,\S)$ are strict and algebraically $G$-closed. If $(K^G,\D)$ and $(L^G,\D)$ are PAC-differential, then $(K^G,\D)\preccurlyeq(L^G,\D)$.
\end{proposition}
\begin{proof}
By Remark \ref{PACdiff}, $K^G$ and $L^G$ are PAC-fields. By Theorems 3.22 and 3.25 of \cite{HoffKow2018} (which use the assumptions that $(K,\S)$ and $(L,\S)$ are strict and algebraically $G$-closed), we have that $K^G$ is bounded and $K^G\preccurlyeq L^G$. Since $(K^G,\D)$ and $(L^G,\D)$ are both PAC-differential, Lemma \ref{elementarydiff} implies that $(K^G,\D)\preccurlyeq (L^G,\D)$.
\end{proof}

Here is the desired characterisation (which extends Theorem 3.29 from \cite{HoffKow2018}). 

\begin{theorem}
Let $(K,\D, \S)$ be a $G$-differential field. Assume $(K,\S)$ is strict. Then, the following are equivalent. 
\begin{enumerate}
    \item $(K,\D,\S)\models \GDCF$.
    \item $(K^G,\D)$ is $K$-strongly PAC-differential.
    \item $(K^G,\D)$ is PAC-differential and $(K,\S)$ is algebraically $G$-closed.
\end{enumerate}
\end{theorem}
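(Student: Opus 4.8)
The plan is to prove the cycle of implications $(1)\Rightarrow(2)\Rightarrow(3)\Rightarrow(1)$. The first implication requires no new work: it is precisely the content of Proposition~\ref{strongPACdiff}, which asserts that if $(K,\D,\S)\models\GDCF$ then $(K^G,\D)$ is $K$-strongly PAC-differential.

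For $(2)\Rightarrow(3)$ the PAC-differential clause is immediate, since an absolutely irreducible differential variety over $K^G$ is a fortiori $K$-irreducible, so $K$-strong PAC-differentiality specialises to PAC-differentiality. The substance is to deduce that $(K,\S)$ is algebraically $G$-closed, and as this concerns only the pure $G$-field $(K,\S)$, I would reduce it to the field-theoretic equivalence \cite[Theorem 3.29]{HoffKow2018} that this theorem extends. The bridge is to observe that $K^G$ is $K$-strongly PAC as a pure field: given a $K$-irreducible affine variety $V$ over $K^G$, in characteristic zero $V$ is also $K$-irreducible as a differential variety (its differential vanishing ideal agrees with that of an algebraically generic point of $V$ equipped with algebraically independent derivatives, hence is prime), so $K$-strong PAC-differentiality supplies a differential $K^G$-point of $V$, which is in particular a $K^G$-point. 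With $K^G$ now $K$-strongly PAC and $(K,\S)$ strict by hypothesis, \cite[Theorem 3.29]{HoffKow2018} yields that $(K,\S)$ is algebraically $G$-closed.

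The main direction is $(3)\Rightarrow(1)$, and I would argue by transfer rather than by verifying the axiom scheme $(\#)$ directly. As $\GDCF$ is the model companion of $\GDF$, embed $(K,\D,\S)$ into some $(L,\D,\S)\models\GDCF$. By Corollary~\ref{fromTCF} and Proposition~\ref{strongPACdiff}, $(L,\S)$ is strict and algebraically $G$-closed and $(L^G,\D)$ is PAC-differential, while by hypothesis $(K,\S)$ is strict and algebraically $G$-closed and $(K^G,\D)$ is PAC-differential. Thus all the hypotheses of Proposition~\ref{elext} hold for $(K,\D,\S)\subseteq(L,\D,\S)$, giving $(K^G,\D)\preccurlyeq(L^G,\D)$. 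It then suffices to lift this to $(K,\D,\S)\preccurlyeq(L,\D,\S)$, since an elementary substructure of a model of $\GDCF$ is again a model of $\GDCF$.

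This lifting step is where I expect the main obstacle to lie, and it is resolved using strictness. As $(K,\S)$ is strict, $K/K^G$ is a finite Galois extension with group $G$; choose a primitive element $\theta$ with minimal polynomial $p\in K^G[T]$ of degree $|G|$. Since the $G$-action on $L$ is faithful and restricts to that on $K$, the $G$-orbit of $\theta$ in $L$ has size $|G|$, whence $L=L^G(\theta)$ and $p$ remains the minimal polynomial of $\theta$ over $L^G$. Consequently the whole $G$-differential field $(K,\D,\S)$ is uniformly interpretable in $(K^G,\D)$ over a finite tuple of parameters from $K^G$, namely the coefficients $c_i$ of $p$, the polynomials over $K^G$ expressing each $\sigma_g(\theta)$ in terms of $\theta$, and the element $\D\theta=-\big(\sum_i \D(c_i)\theta^i\big)/p'(\theta)$; crucially, all these parameters lie in $K^G$ because $\D$ commutes with $\S$ and hence $\D(K^G)\subseteq K^G$. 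The identical interpretation, applied over $L^G$, reconstructs exactly $(L,\D,\S)$. Since the parameters lie in $K^G\preccurlyeq L^G$ and the interpretation is uniform, elementarity passes through it and yields $(K,\D,\S)\preccurlyeq(L,\D,\S)$, completing the cycle. The points demanding care are precisely that $p$ still generates $L$ over $L^G$ (so the interpretation over $L^G$ genuinely produces $(L,\D,\S)$ and not merely an abstract $G$-differential field) and that the derivation data is recovered correctly; both rest on strictness and on $\D(K^G)\subseteq K^G$.
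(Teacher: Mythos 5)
Your argument is correct, and for the two substantive directions it is essentially the paper's: $(1)\Rightarrow(2)$ is quoted from Proposition~\ref{strongPACdiff}, and $(3)\Rightarrow(1)$ proceeds exactly as in the paper -- embed into a model of $\GDCF$, use Corollary~\ref{fromTCF} and Proposition~\ref{elext} to get $(K^G,\D)\preccurlyeq(L^G,\D)$, then transfer elementarity upward via a uniform interpretation of the full $G$-differential field in its invariants over parameters from $K^G$. The only difference there is cosmetic: you interpret $K$ in $K^G$ via a primitive element $\theta$ and its minimal polynomial, where the paper uses a $K^G$-linear basis $v_1,\dots,v_\ell$ and structure constants; both hinge on the same two facts (linear disjointness of $K$ from $L^G$ over $K^G$, coming from strictness of both $G$-fields, and $\D(K^G)\subseteq K^G$ because the derivations commute with $\S$).

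The one place you genuinely diverge is $(2)\Rightarrow(3)$. The paper argues directly: if $(K,\S)$ were not algebraically $G$-closed, pick $a\in L^G\setminus K^G$ in a proper algebraic $G$-extension, observe that its minimal polynomial $f$ over $K$ is $\S$-invariant and hence lies over $K^G$, and note that the zero-dimensional differential variety $f=0$ is then $K$-irreducible over $K^G$ with no $K^G$-point -- a two-line contradiction needing no external input. You instead pass to the pure-field statement of \cite[Theorem 3.29]{HoffKow2018}, which requires the bridge that $K$-strong PAC-differentiality implies $K$-strong PAC-ness for algebraic varieties. That bridge is true, but your parenthetical justification is imprecise as stated: one cannot equip a generic point of a proper subvariety $V$ with \emph{algebraically independent} derivatives, since the derivatives of the coordinates are constrained by the equations of $V$; the correct underlying fact is that an irreducible affine variety over a differential field of characteristic zero is irreducible as a differential variety (its differential vanishing ideal is the unique minimal prime differential ideal over $\mathcal I(V/K)$, realised by a generic prolongation in which the derivatives of a transcendence basis are chosen generically). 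That fact is standard (Kolchin), so your route closes, but the paper's direct minimal-polynomial argument is both shorter and self-contained, and avoids invoking the external field-theoretic theorem altogether.
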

\begin{proof}
We have already seen than (1) implies (2).

We prove (2) implies (3). Assume towards a contradiction that $(K,\S)$ is not algebraically $G$-closed. Then, there is an proper algebraic extension $L$ of $K$ such that the $G$-difference structure on $K$ extends to $L$. We note that any extension of $\Sigma$ to $L$ continues to commute with $\D$ (i.e., the extensions to $L$ are differential automorphisms). We also note that $L^G$ is a proper algebraic extension of $K^G$. Let $a\in L^G\setminus K^G$ and $f$ be the minimal polynomial of $a$ over $K$. Then, for any $g\in G$, we have
$$0=\sigma_g(f(a))=f^{\sigma_g}(\sigma_g(a))=f^{\sigma_g}(a),$$
where $f^{\sigma_g}$ denotes the polynomial obtained by applyng $\sigma_g$ to the coefficients of $f$. It follows that $f^{\sigma_g}=f$ and so $f$ is over $K^G$. However, $f$ has no root in $K^G$, contradicting that $(K^G,\D)$ is $K$-strongly PAC-differential.

We prove (3) implies (1). Let $(L,\D,\S)$ be model of $\GDCF$ which extends $(K,\D,\S)$. By the results above, we know that $(L,\S)$ is strict and algebraically $G$-closed. Furthermore, $(L^G,\D)$ is PAC-differential. By Proposition \ref{elext}, we have that
\begin{equation}\label{eqelementary}
(K^G,\D)\preccurlyeq(L^G,\D).
\end{equation}
Now let $v_1,\dots,v_\ell$ be a linear basis of $K$ over $K^G$ (by strictness $\ell=|G|$). Since $(K,\S)$ is strict, it follows that $v_1,\dots,v_\ell$ is also a linear basis for $L$ over $L^G$ (indeed, $K$ is linearly disjoint from $L^G$ over $K^G$). Using this basis one can induce a $G$-field structure (as in Remark 2.3 of \cite{HoffKow2018}); namely, a multiplication and a $G$-action on the differential $K^G$-vector space $((K^G)^{\ell},\D)$, where the elements of $\D$ act coordinate-wise on $(K^G)^\ell$ and so $G$ acts by differential automorphisms. Note that then $((K^G)^\ell, \D, \S)$ is a $G$-differential field isomorphic to $(K,\D,\S)$, and the structure in the former is \emph{definable} in $(K^G,\D)$. Moreover, since the $v_i$'s are in $K$, the same formulas defining multiplication and $G$-action in $((K^G)^\ell, \D, \S)$ also define a $G$-differential field structure on $((L^G)^\ell,\D)$. Again, $((L^G)\ell,\D,\S)$ is isomorphic to $(L,\D,\S)$, and is \emph{definable} in $(L^G,\D)$. By \eqref{eqelementary}, we have
$$((K^G)^\ell,\D, \S)\preccurlyeq((L^G)^\ell,\D, \S).$$
By the isomorphisms mentioned above, this yields
$$(K,\D,\S)\preccurlyeq(L,\D,\S),$$
 and hence $(K,\D,\S)\models \GDCF$, as desired. 
\end{proof}


\subsection{Supersimplicity and rank computations} As we pointed out in Corollary \ref{fromTCF}, when the group $G$ is trivial the theory $\GDCF$ coincides with $\DCFm$ and so it is $\omega$-stable of rank $\omega^m$. When the group is nontrivial, the story is slightly different. 

\begin{proposition}\label{simplenotstable}
Let $(K,\Delta,\Sigma)\models\GDCF$. If $G$ is non-trivial, then $(K,\Delta,\Sigma)$ and $(K^G,\D)$ are supersimple, but unstable.
\end{proposition}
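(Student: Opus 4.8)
The plan is to separate the two assertions, as they require quite different ingredients. For supersimplicity, I would simply invoke the material already assembled in the excerpt. Item (5) in the list following Remark~\ref{remark:completeness} already records that $\GDCF$ is supersimple (this comes from \cite{Hoff3}, using that $\DCFm$ is superstable with the appropriate tameness and that the forking independence in $\GDCF$ is controlled by forking in $\DCFm$ applied to $G\cdot A$). Likewise item (6) records that $(K^G,\D)$ is supersimple. So the supersimplicity half of the statement is not really new work here; the proof should just point to these facts and perhaps to Proposition~\ref{strongPACdiff}, which exhibits $(K^G,\D)$ as a (bounded) PAC-differential field, the archetypal source of supersimplicity.

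**Instability.** The substantive content is the failure of stability when $G$ is nontrivial. The natural strategy is to exhibit a formula with the order property, and the cleanest source is the difference reduct together with Corollary~\ref{fromTCF}. By that corollary $(K,\Sigma)$ is a model of $\GTCF$, and more to the point the invariant field $K^G$ and the whole field $K$ are bounded PAC but \emph{not} pseudo-finite (item (1) of Corollary~\ref{fromTCF}); moreover $(K,\Sigma)$ is strict and algebraically $G$-closed. First I would reduce instability of $(K,\D,\Sigma)$ and of $(K^G,\D)$ to instability of a reduct we already understand. Since a reduct of a stable theory is stable, it suffices to find \emph{some} reduct (in a sublanguage) that is unstable; equivalently, a single formula witnessing the order property in the $G$-differential structure. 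The most economical route is to transfer the known instability of $\GTCF$: in \cite{HoffKow2018} it is shown that for nontrivial $G$ the theory $\GTCF$ is unstable (indeed simple unstable, exactly as in the $G=\mathbb Z$ difference-field case $\mathrm{ACFA}$). Since $(K,\Sigma)$ is a model of $\GTCF$ by Corollary~\ref{fromTCF}, the pure $G$-field reduct of $(K,\D,\Sigma)$ is already unstable, and hence so is $(K,\D,\Sigma)$.

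For $(K^G,\D)$ the transfer is analogous but one step more delicate, since $\Sigma$ is not part of its language: here instability must be visible in the \emph{differential field} $(K^G,\D)$ alone. The cleanest argument uses that $(K^G,\D)$ is a bounded PAC-differential field which is not pseudo-finite. The idea I would pursue: a stable field is separably closed (by the classical theorem of Macintyre--Cherlin--Shelah that an infinite stable field of characteristic $0$ is algebraically closed), whereas $K^G$ is PAC but not algebraically closed (for nontrivial $G$, by item (4) of Corollary~\ref{fromTCF}, $K$ itself is not algebraically closed, and $K^G\subseteq K$ with $\gal(K^G)$ a nontrivial Frattini cover of $G$). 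A theory whose models are bounded PAC but not separably/algebraically closed carries the independence property or at least the order property on the underlying field; concretely, the boundedness means $\gal(K^G)$ is a nontrivial small profinite group, and the Galois formulas coding the finitely many degree-$n$ extensions produce the order property exactly as in the Chatzidakis--Pillay analysis \cite{ChPi} of bounded PAC-fields referenced in the introduction. So I would argue: since the pure field $K^G$ is bounded PAC and not algebraically closed, it is unstable as a field, and therefore the richer structure $(K^G,\D)$ is unstable.

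**Main obstacle.** The supersimplicity is essentially bookkeeping, so the real work is the instability of $(K^G,\D)$. The hard part will be pinning down the order-property witness purely in the differential-field language without recourse to $\Sigma$; transferring instability from $(K,\D,\Sigma)$ to its reduct $(K^G,\D)$ is not automatic, since passing to a reduct can \emph{gain} stability. One must genuinely locate the order property inside $(K^G,\D)$, and the cleanest handle is the field-theoretic one above: $K^G$ is an infinite field of characteristic $0$ that is not algebraically closed, hence (being infinite) cannot be stable as a pure field by Macintyre's theorem, and instability of the field reduct immediately yields instability of $(K^G,\D)$. I expect the final proof to run: (i) cite items (5),(6) for supersimplicity; (ii) cite \cite{HoffKow2018} for instability of $\GTCF=(K,\Sigma)$ and deduce instability of $(K,\D,\Sigma)$; (iii) use Corollary~\ref{fromTCF}(4) to see $K^G$ is not algebraically closed, invoke Macintyre's theorem that infinite stable fields of characteristic $0$ are algebraically closed to conclude the field $K^G$ is unstable, and hence $(K^G,\D)$ is unstable.
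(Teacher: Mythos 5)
Your supersimplicity half matches the paper exactly: it just cites items (5) and (6) of the list imported from \cite{Hoff3}. Your route to instability of $(K,\Delta,\Sigma)$ via the reduct $(K,\Sigma)\models\GTCF$ is valid (a reduct of a stable structure is stable, and $\GTCF$ is unstable for nontrivial $G$), though the paper is more uniform: it passes all the way down to the \emph{pure field} reducts $K$ and $K^G$, notes both are PAC-fields by Corollary~\ref{fromTCF}, and invokes the theorem (Fact 2.6.7 in Kim's book, going back to Duret and Chatzidakis--Pillay) that a PAC-field of characteristic zero is stable if and only if it is algebraically closed; since $K$ is not algebraically closed for nontrivial $G$, neither is $K^G$ (as $K/K^G$ is algebraic), and both field reducts are unstable. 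This single dichotomy handles both structures at once.

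The genuine gap is in your final argument for $(K^G,\Delta)$. You ultimately rest the case on ``the classical theorem of Macintyre--Cherlin--Shelah that an infinite stable field of characteristic $0$ is algebraically closed.'' That is not a theorem: Macintyre's result is for $\omega$-stable fields and Cherlin--Shelah's for \emph{superstable} fields; for fields that are merely stable, the assertion that they must be separably (hence, in characteristic zero, algebraically) closed is the open stable fields conjecture. As written, step (iii) of your summary therefore does not go through. There are two clean repairs, both available from what you already assembled: either use the PAC-specific dichotomy (PAC of characteristic zero and stable implies algebraically closed), which is exactly what the paper does and which you in fact gesture at in your middle paragraph via the Chatzidakis--Pillay analysis of bounded PAC-fields; or note that $(K^G,\Delta)$ is supersimple by item (6), so if it were stable it would be superstable, its field reduct $K^G$ would be an infinite superstable field, and \emph{then} Cherlin--Shelah applies to force $K^G$ algebraically closed, a contradiction. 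Either patch closes the gap; without one of them the deduction is circularly relying on an open conjecture.
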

\begin{proof}
Supersimplicity, as we pointed out above, comes from \cite{Hoff3}.
To show unstability, it is enough to conclude that the reducts $K$ and $K^G$ are unstable fields.
To see that, we use the fact that $K$ and $K^G$ are PAC-fields (by Corollary \ref{fromTCF}),
by Fact 2.6.7 from \cite{Kimsim} we know that a PAC-field of characteristic zero is stable if and only if it is algebraically closed. But in Corollary \ref{fromTCF} we pointed out that, when $G$ is nontrivial, $K$ cannot be algebraically closed (and thus neither is $K^G$).
\end{proof}

\begin{remark}\label{rem:ranks}
Before we prove the next result, we need to recall what is the $\SU$-rank of $\DCF_{0,m}$. In Lemma 5.4.13 of \cite{OmarPhD} it is shown that if $p\in S_1(\mathbb Q)$ is the generic type in $\DCFm$ then $\SU(p)=\omega^m$. By quantifier elimination, $p=tp^{\DCFm}(a)$ if and only $a$ is differentially transcendental over $\mathbb Q$. In fact, from the proof in \cite{OmarPhD}, one can deduce that $\SU(a/\mathbb Q)=\omega^m$ if and only $a$ is differentially transcedental over $\mathbb Q$. Furthermore, using Lascar inequalities, we get that for an $n$-tuple $\bar a$ we have that $\SU(\bar a/\mathbb Q)=\omega^m\cdot n$ if and only if $\bar a$ is differentially transcendental (or differentially independent rather) over $\mathbb Q$.
\end{remark}

\noindent Recall that in the case $m=0$ (i.e. no derivations) the theory $G\operatorname{-DCF_{0,0}}$ coincides with the theory $\GTCF$. In this case, the $\SU$-ranks of 
$K^G$ and $(K,\Sigma)$ were computed in Propositions 4.8 and 4.9, respectively, of \cite{HoffKow2018}. Here, when $m$ is arbitrary, we have the following counterpart:

\begin{theorem}\label{SUrankinv}
Let $(K,\Delta,\Sigma)$ be a monster model of $\GDCF$.
The $\SU$-rank of $(K^G,\Delta)$ is $\omega^m$. 
\end{theorem}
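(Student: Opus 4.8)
The plan is to compute all $\SU$-ranks inside the ambient supersimple theory $\GDCF$, exploiting the forking description in item (5) of the list above together with the fact that $G$ acts trivially on $K^G$. First I would record a reduction. The set $K^G$ is $\emptyset$-definable in $(K,\Delta,\Sigma)$ by $\bigwedge_{g\in G}\sigma_g(x)=x$, and $\Sigma$ restricts to the identity on it. Using semi-quantifier elimination (item (3)) and the description $\acl^{(F,\Delta,\Sigma)}(A)=\langle G\cdot A\rangle^{\alg}\cap F$ (item (1)), together with Remark~\ref{remark:completeness}, the $\GDCF$-type of a tuple from $K^G$ over parameters in $K^G$ is determined by its $\DCFm$-type (as $G\cdot\bar b=\bar b$ for such tuples). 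Hence the structure that $\GDCF$ induces on $K^G$ is exactly that of the differential field $(K^G,\Delta)$, so the $\SU$-rank of $\theo(K^G,\Delta)$ coincides with the $\SU$-rank computed in $\GDCF$ for types concentrated on $K^G$. It therefore suffices to compute $\SU^{\GDCF}(a/C)$ for $a\in K^G$ and $C\subseteq K^G$.

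The key point is that for such $a$ and $C$ one has $G\cdot a=\{a\}$ and $G\cdot C=C$, so item (5) collapses to the equivalence $a\ind^{\GDCF}_C B\iff a\ind^{\DCFm}_C G\cdot B$ for arbitrary $B\subseteq F$. The aim is to deduce $\SU^{\GDCF}(a/C)=\SU^{\DCFm}(a/\langle C\rangle)$; since $a$ is a single element, Remark~\ref{rem:ranks} then gives the value $\le\omega^m$, with equality exactly when $a$ is differentially transcendental over $\langle C\rangle$. The upper bound $\le$ I would prove by transfinite induction on an ordinal $\alpha$: if $\SU^{\GDCF}(a/C)\ge\alpha+1$, choose $B$ with $a\nind^{\GDCF}_C B$ and $\SU^{\GDCF}(a/CB)\ge\alpha$; the induction hypothesis applied over $CB$ yields $\alpha\le\SU^{\DCFm}(a/\langle G\cdot C\cup G\cdot B\rangle)$, while $a\nind^{\GDCF}_C B$ translates to $a\nind^{\DCFm}_{G\cdot C}G\cdot B$, so enlarging the base from $\langle G\cdot C\rangle$ to $\langle G\cdot C\cup G\cdot B\rangle$ is a $\DCFm$-forking extension and strictly drops the $\DCFm$-$\SU$-rank. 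Thus $\alpha<\SU^{\DCFm}(a/\langle G\cdot C\rangle)$, closing the induction and giving $\SU^{\GDCF}(a/C)\le\omega^m$ for every $a\in K^G$ and every $C$; this bounds the $\SU$-rank of $(K^G,\Delta)$ by $\omega^m$.

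For the lower bound I would first secure a high-rank element. Since $(K^G)^{\alg}\models\DCFm$ by Proposition~\ref{strongPACdiff} and algebraic extensions preserve differential transcendence degree, $K^G$ has infinite differential transcendence degree over $\mathbb Q$; fix $a\in K^G$ differentially transcendental over $\mathbb Q$, so $\SU^{\DCFm}(a/\mathbb Q)=\omega^m$ by Remark~\ref{rem:ranks}. It remains to show $\SU^{\GDCF}(a/\mathbb Q)\ge\omega^m$. By the forking equivalence, a $\GDCF$-forking extension of $\tp(a/\mathbb Q)$ over $B$ is the same as a $\DCFm$-forking extension of $\tp^{\DCFm}(a/\mathbb Q)$ over the $G$-invariant set $G\cdot B$; so producing a chain of $\GDCF$-forking extensions of ranks cofinal in $\omega^m$ reduces to realizing the corresponding $\DCFm$-forking extensions over $G$-invariant parameters.

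I expect this last realization to be the main obstacle. Concretely, one must witness, for each $\beta<\omega^m$, a $\DCFm$-forking extension of $\tp^{\DCFm}(a/\mathbb Q)$ of rank $\beta$ using parameters drawn from the invariant subfield $F^G$ (which are automatically $G$-invariant). The favourable structure is that all derivatives $\delta^\xi a$ already lie in $K^G$, and that $(F^G,\Delta)$ is itself a PAC-differential field of infinite differential transcendence degree, so it supplies the generic coefficients needed to impose over $\mathbb Q$ differential-algebraic conditions on $a$ of each prescribed order, mirroring the forking extensions used in the computation of $\SU^{\DCFm}(a/\mathbb Q)=\omega^m$. The delicate part is to carry this out so that the rank drops along the $G$-invariant chain agree exactly with those of the ambient $\DCFm$-computation; once this is done, the forking equivalence upgrades the chain to a $\GDCF$-forking chain of the same length, giving $\SU^{\GDCF}(a/\mathbb Q)=\omega^m$. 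Combined with the reduction of the first paragraph, this yields $\SU(K^G,\Delta)=\omega^m$.
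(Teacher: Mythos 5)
Your overall strategy diverges from the paper's at the very first step, and that step contains a genuine gap. The theorem is about the $\SU$-rank of the $\mathcal L_{\Delta}$-structure $(K^G,\Delta)$ in its own right, and the paper computes it by treating $(K^G,\Delta)$ as a bounded PAC-differential field and invoking the forking description for such PAC substructures (Remark 6.20 of \cite{HoffLee}, available because $\DCFm$ has NFCP and B(3), being PAC-differential is first-order, and $\gal(K^G)$ is small): forking in $\theo(K^G,\Delta)$ over a small elementary submodel $E$ \emph{is} forking in $\DCFm$, after which the upper bound is immediate and the lower bound follows from exhibiting a differentially transcendental element of $K^G$ over $E$. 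You instead try to compute everything inside $\GDCF$ on the definable set $K^G$, and the bridge you build -- ``the structure that $\GDCF$ induces on $K^G$ is exactly that of the differential field $(K^G,\Delta)$'', justified by saying the $\GDCF$-type of a tuple from $K^G$ over parameters in $K^G$ is determined by its $\DCFm$-type -- is false. By semi-quantifier elimination, the type of $\bar b$ over $C$ is governed by the $\mathcal L_{\Delta,\Sigma}$-isomorphism type of $\acl^{\GDCF}(C\bar b)=\langle C\bar b\rangle^{\alg}\cap K$, which contains elements \emph{outside} $K^G$ on which $\Sigma$ acts nontrivially; this Galois-theoretic data is extra induced structure. (Already for $G=\mathbb Z/2$ and $m=0$: two elements of $K^G$ transcendental over $\mathbb Q$ have the same $\ACF$-type, but one may satisfy $\exists y\,(y^2=x\wedge\sigma(y)=-y)$ while the other does not, since $\gal(K)\cong\mathbb Z_2$ forces $K^\times/(K^\times)^2$ to be nontrivial.) Without that bridge, nothing relates $\SU$-rank computed in $\GDCF$ on the set $K^G$ to $\SU$-rank of the theory $\theo(K^G,\Delta)$, so both your upper-bound induction and your lower bound are computing the wrong quantity.

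Separately, even granting your reduction, your lower bound is not a proof: you yourself flag the realization of a cofinal family of forking extensions over $G$-invariant parameter sets as ``the main obstacle'' and leave it open. In the paper this difficulty evaporates because the forking identification is stated for the structure $(K^G,\Delta)$ over a base $E\preccurlyeq (K^G,\Delta)$, so one only needs a single element $a\in K^G$ differentially transcendental over $E$ (obtained from saturation and the axiom scheme $(\#)$ applied to the ideal forcing $\sigma_g(x)=x$), whereupon $\SU^{\DCFm}(a/E)=\omega^m$ transfers directly. If you want to salvage your approach, the missing ingredient is precisely a theorem identifying forking in the bounded PAC-differential field $(K^G,\Delta)$ with $\DCFm$-forking; that is what Remark 6.20 of \cite{HoffLee} supplies and what cannot be replaced by the induced-structure claim.
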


\begin{proof}
To compute the $\SU$-rank of $(K^G,\Delta)$ we need a practical description of forking independence in $(K^G,\Delta)$. Fix some small $(E,\Delta)\preceq(K^G,\Delta)$. As we have pointed out above, $\DCFm$ enjoys NFCP, the boundary property B(3), and being PAC-differential is a first-order property in $\DCFm$; hence,
we can use the forking independence description given in Remark 6.20 in \cite{HoffLee},
provided that 
$(K^G,\Delta)$ has small absolute Galois group. But this follows from the fact that $\acl^{\DCFm}(K^G)=(K^G)^{\alg}$, $\gal(K^G)=\gal_{\D}(K^G)$ (see Remark~\ref{diffaut}), and $K^G$ is bounded. 


Thus, by Remark 6.20 from \cite{HoffLee}, we get that the forking independence $\ind$ in $(K^G,\Delta)$
extended by parameters from $E$ is given by the forking independence in $\DCFm$; namely:
$$a\ind^{(K^G,\Delta)}_A B\qquad\iff\qquad a\ind^{\DCFm}_{A} B,$$
where the tuple $a$, and the sets $E\subseteq A,B$ are from $K^G$.
Using the description of the forking independence relation in $\DCFm$, 
we obtain that
$$a\ind^{(K^G,\Delta)}_{E}B\qquad\iff \langle a, E\rangle\text{ is free from }\langle EB\rangle\text{ over }E,$$
where $a\in K^G$ and $B\subseteq K^G$. 
Because the SU-rank of $\DCFm$ is $\omega^m$, we obtain that the SU-rank of $\tp^{(K^G,\Delta)}(a/E)$ is at most $\omega^m$ for every $a\in K^G$. 

\medskip

On the other hand, by saturation of $(K,\D,\S)$ and the axioms of $\GDCF$, we can find $a\in K^G$ such that $a$ is differentially transcendental over $E$. Then the $\SU$-rank of $tp^{\DCFm}(a/E)$ is $\omega^m$. It follows that $\SU$-rank of $tp^{(K^G,\D)}(a/E)=\omega^m$.  
\end{proof}

\begin{theorem}\label{theorem:SU.rank.K}
Let $(K,\Delta,\Sigma)$ be a monster model of $\GDCF$. Then, $\SU$-rank of $(K,\Delta,\Sigma)$ is equal to 
$\omega^m\cdot |G|$ (in Cantor normal form). 
\end{theorem}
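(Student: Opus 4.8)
The plan is to compute $\SU$-rank of $(K,\Delta,\Sigma)$ by relating it to the $\SU$-rank of the reduct $(K^G,\Delta)$ computed in Theorem~\ref{SUrankinv}, exploiting strictness. Since $(K,\Sigma)$ is strict (by Corollary~\ref{fromTCF}), we have $[K:K^G]=|G|$, so $K$ is a $|G|$-dimensional $K^G$-vector space with a definable structure. The key observation is that, fixing a linear basis $v_1,\dots,v_\ell$ of $K$ over $K^G$ (with $\ell=|G|$), every element of $K$ is a $K^G$-linear combination $\sum_i c_i v_i$, and the whole $G$-differential field structure on $K$ is interpretable in $(K^G,\Delta)$ via coordinates $(c_1,\dots,c_\ell)\in (K^G)^\ell$ (as in the proof of the characterisation theorem above). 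Thus a single element of $K$ corresponds to an $\ell$-tuple from $K^G$.

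First I would make precise that the identification $(K,\Delta,\Sigma)\cong((K^G)^\ell,\Delta,\Sigma)$ is an interpretation, so that types and forking transfer: for $a\in K$ with coordinate tuple $\bar c=(c_1,\dots,c_\ell)$ over a small $E\preceq(K^G,\Delta)$, we have $\SU^{(K,\Delta,\Sigma)}(a/E)=\SU^{(K^G,\Delta)}(\bar c/E)$, because interpretations preserve $\SU$-rank on the relevant sorts. Second, I would apply the Lascar inequalities in $(K^G,\Delta)$: since each coordinate $c_i$ has $\SU(c_i/E)\leqslant\omega^m$ (by the bound from Theorem~\ref{SUrankinv}), the $\ell$-tuple $\bar c$ satisfies
$$\SU(\bar c/E)\leqslant \SU(c_1/E)\oplus\cdots\oplus\SU(c_\ell/E)\leqslant \omega^m\cdot \ell=\omega^m\cdot|G|,$$
where $\oplus$ is the natural (Hessenberg) sum of ordinals; the right-hand side is $\omega^m\cdot|G|$ in Cantor normal form. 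This gives the upper bound $\SU(K,\Delta,\Sigma)\leqslant\omega^m\cdot|G|$.

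For the lower bound, I would exhibit a single element $a\in K$ (equivalently, a tuple $\bar c\in(K^G)^\ell$) whose coordinates are differentially \emph{independent} over $E$. By saturation and the axioms of $\GDCF$ (as used in the proof of Theorem~\ref{SUrankinv} to produce a differentially transcendental element of $K^G$), one can arrange that the coordinate tuple $\bar c=(c_1,\dots,c_\ell)$ realising $a$ is differentially independent over $E$ in $(K^G,\Delta)$. By Remark~\ref{rem:ranks}, an $\ell$-tuple of differentially independent elements has $\SU(\bar c/E)=\omega^m\cdot\ell=\omega^m\cdot|G|$, and since the lower Lascar inequality forces $\SU(\bar c/E)\geqslant \omega^m\cdot\ell$ precisely when the coordinates are differentially independent, this matches the upper bound. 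Transferring back through the interpretation yields $\SU^{(K,\Delta,\Sigma)}(a/E)=\omega^m\cdot|G|$, and hence the $\SU$-rank of $(K,\Delta,\Sigma)$ is exactly $\omega^m\cdot|G|$.

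The main obstacle I anticipate is twofold. First, one must verify carefully that the interpretation of $(K,\Delta,\Sigma)$ in $(K^G,\Delta)$ preserves $\SU$-rank; this requires that the basis $v_1,\dots,v_\ell$ be available over the base $E$ (or at least over $\acl^{(K^G,\Delta)}(E)$), which should follow from boundedness and strictness as in Corollary~\ref{fromTCF} and the characterisation theorem. Second, and more delicately, the exactness of the rank computation for the tuple $\bar c$ relies on the fact that $\SU(\bar c/E)$ equals the natural sum of the coordinate ranks precisely when the coordinates are mutually independent and each has rank exactly $\omega^m$; here the Lascar inequalities give both bounds, but one should confirm that differential transcendence of a single $a\in K$ over $E$ indeed forces its full coordinate tuple to be differentially independent over $E$. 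This last point is where the strictness hypothesis (via linear disjointness of $K$ from $L^G$ over $K^G$) does the essential work.
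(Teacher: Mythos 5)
Your proposal is essentially correct but takes a genuinely different route from the paper. You compute the rank by viewing $K$ as an $\ell$-dimensional $K^G$-vector space (strictness) and transporting the question to $\ell$-tuples in $(K^G,\Delta)$, whose rank is controlled by Theorem \ref{SUrankinv} and the Lascar inequalities; this is in the spirit of the computation for $\GTCF$ in \cite{HoffKow2018}. The paper instead works directly with the orbit tuple $\bar\sigma(a)=(\sigma_{g_1}(a),\dots,\sigma_{g_\ell}(a))$ and the already-established transfer of forking, $A\ind_C^{\GDCF}B\iff G\cdot A\ind_{G\cdot C}^{\DCFm}G\cdot B$, which immediately gives $\SU^{\GDCF}(a)\leqslant \SU^{\DCFm}(\bar\sigma(a))\leqslant\omega^m\cdot|G|$ and, for $a$ with $\bar\sigma(a)$ differentially independent (produced by the axioms plus saturation), equality. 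The paper's route is shorter because it never has to set up an interpretation; yours has the virtue of isolating exactly where strictness and the rank of the invariant field enter.

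Two points in your write-up need tightening. First, the assertion that ``interpretations preserve $\SU$-rank'' is not true of a one-sided interpretation; you need the two-sided sandwich: $(K^G,\Delta,d)$ interprets $(K,\Delta,\Sigma,\bar v,d)$, giving $\SU^{(K,\Delta,\Sigma)}(a/E)\leqslant\SU^{(K^G,\Delta)}(\bar c/E')$ by the reduct inequality, while conversely $K^G$ with its restricted derivations is definable in $(K,\Delta,\Sigma)$, giving $\SU^{(K^G,\Delta)}(\bar c/E')\leqslant\SU^{(K,\Delta,\Sigma)}(\bar c/E'')$, and finally $a$ and $\bar c$ are interdefinable over $\bar v$ (the coordinate functions are explicit $K$-linear combinations of the $\sigma_g(a)$, by invertibility of the matrix $(\sigma_{g_i}(v_j))$). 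Chaining these yields the equality you want; no single appeal to ``interpretation'' suffices. Second, your closing worry --- that differential transcendence of a single $a\in K$ over $E$ should force its coordinate tuple to be differentially independent --- is asking for an implication that is false in general (take $a\in K^G$ differentially transcendental: its coordinates in a basis containing $1$ are $(a,0,\dots,0)$). Fortunately your construction does not need it: you build the differentially independent tuple $\bar c\in(K^G)^\ell$ first (which the axioms of $\GDCF$ do provide, applied to the $G$-invariant prime differential ideal generated by the differences ${\bf x}_{g_1}-{\bf x}_{g_i}$) and then set $a=\sum_i c_iv_i$. You should also note that the value $\omega^m\cdot\ell$ for such a tuple in $(K^G,\Delta)$ is computed via the identification of forking in $\operatorname{Th}(K^G,\Delta)$ with forking in $\DCFm$ from the proof of Theorem \ref{SUrankinv}, since Remark \ref{rem:ranks} as stated is about $\DCFm$ itself.
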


\begin{proof}
As we have pointed out above, it follows from the results in \cite{Hoff3} that the following describes forking independence in $\GDCF$. Let $A,B,C$ be small subsets of $K$, then
\begin{IEEEeqnarray*}{rCl}
A\ind_C^{\GDCF} B &\qquad\iff\qquad & G\cdot A\ind_{G\cdot C}^{\DCFm}G\cdot B \\
&\qquad\iff\qquad & \langle G\cdot A\cup G\cdot C\rangle\text{ is free from }
\langle G\cdot B\cup G\cdot C\rangle \\
& &\text{ over }\langle G\cdot C\rangle
\end{IEEEeqnarray*}    
where $\ind^{\GDCF}$ is the forking independence relation in $\GDCF$.

By the axioms of $\GDCF$ we can find $a\in K$ such that the tuple
$$\bar \sigma(a)=(\sigma_{g_1}(a),\dots, \sigma_\ell(a))\in K^\ell$$
is differentially transcendental (over $\mathbb Q$ say). But then the $\SU$-rank of $tp^{\DCFm}(\bar\sigma(a))$ is $\omega\cdot|G|$. see Remark~\ref{rem:ranks}. If follows that the $\SU$-rank of $tp^{\GDCF}(a)$ equals $\omega^m\cdot |G|$.

On the other hand, for any $a\in K$, the $\SU$-rank of $tp^{\GDCF}(a)$ is at most the $\SU$-rank of $tp^{\DCFm}(\bar\sigma(a))$. But the latter (which is computed in $\DCFm$) is at most $\omega^m\cdot |G|$, again see Remark~\ref{rem:ranks}. The result follows.
\end{proof}

\begin{remark}
As mentioned earlier the theory $\GDCF$ has ``semi" elimination of quantifiers (similarly as ACFA).
One could ask whether the theory $\GDCF$ has (full) elimination of quantifiers. If $G$ is a trivial group, then, as $\GDCF$
coincides with the theory $\DCFm$, it does have elimination of quantifiers. However, if $G$ is a nontrivial (finite) group,
then elimination of quantifiers in $\GDCF$ would lead to stability, which is not possible by Proposition~\ref{simplenotstable} (cf. similar result for the theory $\GTCF$ in Remark 2.11 in \cite{HoffKow2018}). 
\end{remark}


\section{Elimination of imaginaries and bounded PAC-differential fields}\label{sec:EI}
As we already know, every completion of $\GDCF$ has geometric elimination of imaginaries (Theorem 4.36 in \cite{Hoff3}).
In this section we improve this result by pushing the proof of Theorem 4.36 from \cite{Hoff3} further as it has been done for other well known theories -- such as ACFA in \cite{acfa1}, PAC-fields in \cite{manuscript} and \cite{ChPi}, and fields with free operators in \cite{MoosaScanlon}.

\medskip

Before doing this we need to state an improved version of the Independence Theorem (see Theorem 4.21 \cite{Hoff3}); namely, the Independence Theorem over \emph{algebraically closed substructures}. To get this improved version, we will slightly modify the language by adding suitable parameters (as is done for bounded PAC-fields \cite[Section 4]{ChPi}). The reason for this extra effort is that model-theoretic algebraic closure in $\GDCF$ is not necessarily the whole field-theoretic algebraic closure; but the additional parameters give us a strong relation between these two notions of algebraic closure (see Lemma \ref{lemma:good.acl}). 

\medskip

Another key ingredient here, which is important for elimination of imaginaries, is the boundary property $B(3)$ (which as we have pointed out holds for $\DCFm$). Let us note that the absence of this property is behind the fact that the theory of Compact Complex Manifolds with an Automorphism (CCMA, \cite{CCMA}) does \emph{not} have full elimination of imaginaries (note that CCMA is also a model-complete theory with a group action and hence does have geometric elimination of imaginaries).

\medskip

We note that the strategy of our proof here is modelled by the (now standard) arguments in Section 4 of \cite{ChPi} where elimination of imaginaries is established for theories of bounded PAC-fields (after adding a suitable tuple of parameters).

\subsection{On bounded PAC-differential fields}\label{boundPACdiff} In this section we briefly point out how to extend the results from Section 4 of \cite{ChPi} to the differential context (in zero characteristic). In fact, one essentially only needs to extend the field-theoretic ingredients used in the proofs of that section; namely, Theorem 4.3 in \cite{ChPi}. In the differential language this translates to the theorem below. We first recall, from Remark~\ref{diffaut}, that for a differential field $(K,\D)$ we have $\acl^{\DCFm}(K)=K^{\alg}$ and
$$\aut_{\D}(K^{\alg}/K)=\aut(K^{\alg}/K).$$
Thus, the absolute Galois group in the sense of $\DCFm$ (i.e., $\aut_{\D}(\acl^{\DCFm}(K)/K)$) coincides with the field-theoretic absolute Galois group. Henceforth, we denote this group by $\gal(K)$.

\begin{theorem}\label{propertiesPAC}
Let $(K,\D)$ be a differential field, and let $(E,\D_1)$ and $(L,\D_2)$ be differential field extensions. Then,
\begin{enumerate}
    \item Assume $(E,\D_1)$ and $(L,\D_2)$ are PAC-differential. If there is a (topological) isomorphism $\gal(E)\to \gal(L)$ which forms a commutative diagram with the restriction maps $\gal(E)\to \gal(K)$ and $\gal(L) \to \gal (K)$, then $$(E,\D_1)\equiv_{K} (L,\D_2).$$
    \item If $(K,\D)$ and $(E,\D_1)$ are PAC-differential and the restriction map $\gal(E)\to \gal (K)$ is an isomorphism, then $(K,\D)\preccurlyeq(E,\D_1)$.
    \item If $(K,\D)$ is PAC-differential, then $\gal(K)$ is a projective profinite group.
    \item Any algebraic extension of a PAC-differential field is PAC-differential
    \item Any differential field has a differential field extension which is PAC-differential and a regular extension (in the field-sense).
\end{enumerate}
\end{theorem}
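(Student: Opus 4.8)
The plan is to reduce each of the five assertions to its purely field-theoretic counterpart (Theorem 4.3 of \cite{ChPi} together with the classical theory of PAC fields) via the two bridges already available to us. First, by Remark~\ref{PACdiff}(2), a differential field is PAC-differential precisely when its underlying field is a PAC field and $(\cdot,\D)$ is differentially large. Second, by Remark~\ref{diffaut}, in characteristic zero the derivations extend uniquely to the algebraic closure and every field automorphism over the base is automatically differential, so the differential absolute Galois group $\aut_{\D}(K^{\alg}/K)$ coincides with the field-theoretic $\gal(K)$. Combined with Fact~\ref{difflarge}(2), which identifies the algebraic closure of a differentially large field as a model of $\DCFm$, these let me move freely between the field and differential pictures.

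Parts (3) and (4) are then quick. For (4) I simply invoke Lemma~\ref{PACextalg}. For (3), since $(K,\D)$ being PAC-differential forces $K$ to be a PAC field (Remark~\ref{PACdiff}(2)), the absolute Galois group is projective by Ax's theorem on PAC fields; as $\gal(K)=\aut_{\D}(K^{\alg}/K)$, this is exactly the assertion. For (5) I would work inside a monster model $(\U,\D)\models\DCFm$ containing $(K,\D)$ and build an increasing chain $K=F_0\subseteq F_1\subseteq\cdots$ of differential subfields, each regular over $K$, arranged so that every absolutely irreducible affine differential variety over $F_n$ acquires a differential point at some later stage. At each step I add a generic differential point of such a variety; since a generic point of an absolutely irreducible differential variety generates a regular field extension, and regularity is transitive and preserved under unions, the union $F=\bigcup_n F_n$ remains regular over $K$. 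By the geometric characterisation of PAC-differential fields in Remark~\ref{PACdiff}(1), $F$ is PAC-differential. The delicate point here is the bookkeeping that ensures every variety over every $F_n$ is eventually treated while never introducing elements algebraic over $K$.

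Parts (1) and (2) carry the real weight. For (2), the field-theoretic part of Theorem 4.3 of \cite{ChPi} (the restriction $\gal(E)\to\gal(K)$ being an isomorphism, with both fields PAC) yields $K\preccurlyeq E$ as fields. I would then lift this to the differential level: since $(K,\D)$ is differentially large and $K$ is existentially closed in $E$, the definition of differential largeness already gives that $(K,\D)$ is existentially closed in $(E,\D_1)$; to upgrade existential closedness to full elementarity I invoke Fact~\ref{difflarge}(4), whose model-completeness hypothesis is supplied by the field-level model-completeness of (bounded) PAC fields established in \cite[Section 4.6]{ChPi}. For (1) I would assemble a common PAC-differential amalgam: using the given isomorphism $\gal(E)\cong\gal(L)$ over $\gal(K)$, glue $E^{\alg}$ and $L^{\alg}$ over $K^{\alg}$ to form a differential field $N$ containing regular copies of both $E$ and $L$ over $K$ with matching Galois data, and then apply (5) to embed $N$ into a PAC-differential field $M$ regular over both $E$ and $L$, so that $\gal(M)\to\gal(E)$ and $\gal(M)\to\gal(L)$ are isomorphisms. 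Part (2) then yields $(E,\D_1)\preccurlyeq(M,\D)$ and $(L,\D_2)\preccurlyeq(M,\D)$, whence $(E,\D_1)\equiv_{K}(L,\D_2)$.

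I expect the amalgamation step in (1) to be the main obstacle: one must arrange the compositum so that the induced restriction maps on Galois groups are exactly the prescribed isomorphisms (so that (2) is applicable) while keeping all the extensions regular. This is precisely the point at which the boundedness of the Galois groups and the careful addition of parameters to the language (anticipated in the introduction to this section) become essential, and it is also the place where the transfer between the field-theoretic and differential settings must be handled with the most care.
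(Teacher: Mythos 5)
Your overall strategy---reducing to the field-theoretic results of Chatzidakis--Pillay and lifting via differential largeness---works for parts (3), (4) and (5): the paper disposes of these by citing Theorem 4.4 of \cite{Hoff4}, Lemma~\ref{PACextalg}, and Proposition 3.6 of \cite{Hoff3} respectively, and your appeal to Ax's theorem for (3) and your iterative generic-point construction for (5) are acceptable, if more laborious, substitutes. The trouble is in (1) and (2), where you have inverted the paper's logical order and in doing so smuggled in a hypothesis the theorem does not grant. The paper proves (1) directly as an instance of a general result on PAC substructures (Theorem 5.11 of \cite{DHL}, applied to $\DCFm$ via its quantifier elimination) and then obtains (2) as the special case $L=K$ of (1). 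You prove (2) first: you get $K\preccurlyeq E$ at the field level and then invoke Fact~\ref{difflarge}(4) to lift this to $(K,\D)\preccurlyeq(E,\D_1)$. But Fact~\ref{difflarge}(4) requires the theory of $K$ in the ring language with constants for $K$ to be model-complete, and the source you cite for this (\cite[Section 4.6]{ChPi}) supplies it only for \emph{bounded} PAC fields. Theorem~\ref{propertiesPAC}(2) carries no boundedness hypothesis --- compare Lemma~\ref{elementarydiff}, which performs exactly this lifting and for that reason explicitly assumes $K$ bounded. Since the theorem is later applied to PAC-differential fields with no boundedness assumption, this is a genuine gap rather than a cosmetic one.

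The second gap is in (1). Part (5) only produces \emph{some} regular PAC-differential extension; to feed your (2) you need a common extension $M$ for which the restrictions $\gal(M)\to\gal(E)$ and $\gal(M)\to\gal(L)$ are isomorphisms compatible with the prescribed isomorphism $\gal(E)\to\gal(L)$ over $\gal(K)$. Arranging even one of these requires the projectivity of $\gal(E)$ from part (3) to split the surjection coming from regularity and then a passage to the fixed field of the splitting image (this is exactly the argument the paper runs later, in the proof of item (1) of the theorem on $T_{\D,c}$); arranging both \emph{simultaneously and compatibly} with the given isomorphism is the entire content of the Elementary Equivalence Theorem for PAC fields. You correctly identify this amalgamation as the main obstacle, but you do not carry it out, so (1) is not proved as written. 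If you wish to keep your architecture, the cleanest repair is to take (1) as the black box (as the paper does, via \cite{DHL}) and recover (2) from it by setting $L=K$ and $\D_2=\D$, which sidesteps the boundedness issue entirely.
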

\begin{proof}
(1) follows immediately from Theorem 5.11 of \cite{DHL} (by quantifier elimination of $\DCFm$, being a \emph{sorted} isomorphism is equivalent to being an isomorphism).

(2) is a consequence of (1).

(3) By Theorem 4.4 in \cite{Hoff4}.

(4) This is Lemma \ref{PACextalg} above. 

(5) This follows from Proposition 3.6 \cite{Hoff3}.
\end{proof}

Now the results in Section 4 of \cite{ChPi} can be adapted (almost verbatim) using this differential version. To state these results we let $T_\D$ be the complete theory (in the language of differential rings $\mathcal L_\D$) of a \emph{bounded} PAC-differential field $(F,\D)$. For each $n> 1$, let $N(n)$ be the degree over $F$ of the Galois extension composite of all Galois extensions of
$F$ of degree $n$. Consider the language $\mathcal L_{\D,c}$
obtained by adjoining to $\mathcal L_{\D}$ the new constant symbols $c_{n,i}$, for $n > 1$ and  $0\leq  i < N(n)$. We let $T_{\D,c}$ be the $\mathcal L_{\D,c}$-theory obtained by adding to $T_\D$ axioms expressing: for each $n > 1$, the polynomia1 
$$x^{N(n)} + c_{n,N(n)-1}x^{N(n)-1}+\cdots+ c_{n,0}$$ 
is irreducible and the extension obtained by adding to $F$ a root of this polynomial is Galois and contains all Galois extensions of $F$ of degree $n$.

\begin{theorem}
Let $T_{\D,c}$ be as above, $(F,\D,c)\models T_{\D,c}$, and $(E,\D,c)\subseteq (F,\D,c)$. Then, 
\begin{enumerate}
    \item $\acl(E)$ equals the relative field-theoretic algebraic closure of $E$ in $F$.
    \item If $E=\acl(E)$, then $T\cup qfDiag(E)$ is complete.
    \item $T_{\D,c}$ is model-complete.
    \item (Independence theorem over $\acl$-closed sets) Let $a, b, c_1, c_2$ be tup1es from $F$ independent over E. If $tp(c_1/\acl(E)) = tp(c_2/\acl(E))$, then there is a tuple $c'$ independent from $(a,b)$ over $E$ realising $tp(c_1/\acl(Ea)) \cup tp(c_2/\acl(Eb))$.
    \item The theory $T_{\D,c}$ is supersimple, of $\SU$-rank $\omega^m$, and forking independence is given by the forking independence in $\DCFm$; namely:
$$A\ind^{T_{\D,c}}_E B\qquad\iff\qquad A\ind^{\DCFm}_{E} B,$$
with $A,B,E$ subsets of $F$.
    \item The theory $T_{\D,c}$ eliminates imaginaries.
\end{enumerate}
\end{theorem}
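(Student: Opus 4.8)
The plan is to transport Section~4 of \cite{ChPi} to the differential setting essentially line by line, using Theorem~\ref{propertiesPAC} in place of the field-theoretic input (Theorem 4.3 of \cite{ChPi}) and the already-recorded tameness of $\DCFm$ (quantifier elimination, elimination of imaginaries, NFCP, and the boundary property B(3)) in place of the corresponding facts about $\ACF$. The whole argument rests on item (1), so I would establish it first. Writing $\tilde E:=E^{\alg}\cap F$ for the relative field-theoretic algebraic closure, the inclusion $\tilde E\subseteq\acl(E)$ is automatic, so the content is the reverse inclusion. Here the named constants do the work: the axioms of $T_{\D,c}$ make each polynomial $x^{N(n)}+c_{n,N(n)-1}x^{N(n)-1}+\cdots+c_{n,0}$ irreducible with splitting field the composite of all degree-$n$ Galois extensions, and since the $c_{n,i}$ lie in every substructure, the finite Galois extensions of $E$ inside $F$ are generated by roots of these fixed polynomials. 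This rigidifies $\gal(E)$ relative to $\gal(F)$ and shows that no element of $\tilde E$ has unboundedly many conjugates over $E$, giving $\acl(E)=\tilde E$ exactly as in \cite{ChPi} (this is the content of Lemma~\ref{lemma:good.acl}).

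With (1) in hand, (2) follows from Theorem~\ref{propertiesPAC}(1). If $E=\acl(E)$ is relatively algebraically closed in two models $(F_1,\D,c),(F_2,\D,c)\models T_{\D,c}$, then by (1) each $F_i/E$ is a regular extension, so each restriction map $\gal(F_i)\to\gal(E)$ is an isomorphism; composing them gives a compatible isomorphism $\gal(F_1)\cong\gal(F_2)$, and since both $F_i$ are PAC-differential, Theorem~\ref{propertiesPAC}(1) yields $(F_1,\D)\equiv_E(F_2,\D)$, i.e. $T\cup qfDiag(E)$ is complete. Item (3) then drops out: given $(E,\D,c)\subseteq(F,\D,c)$ both models, $E=\acl(E)=\tilde E$ by (1), so $F/E$ is regular, the restriction map $\gal(F)\to\gal(E)$ is an isomorphism, and Theorem~\ref{propertiesPAC}(2) gives $(E,\D)\preccurlyeq(F,\D)$; thus $T_{\D,c}$ is model-complete.

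For (5) I would observe that $T_{\D,c}$ is a substructure-expansion of $\DCFm$, so the forking calculus is inherited: the generic type is realised by a differentially transcendental element (available from the defining axioms together with PAC-differentiality), whence by Remark~\ref{rem:ranks} the $\SU$-rank is $\omega^m$, and forking independence over subsets of $F$ coincides with $\ind^{\DCFm}$. The verification that $\ind^{\DCFm}$ satisfies the axioms of forking in $T_{\D,c}$ is the same computation as in \cite{HoffLee} and in the proof of Theorem~\ref{SUrankinv}, and supersimplicity is immediate once the rank is ordinal-valued.

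The main work, and the genuine obstacle, is (4): the Independence Theorem over \emph{field-theoretic} $\acl$-closed substructures. The Independence Theorem of \cite[Theorem 4.21]{Hoff3} is stated over substructures algebraically closed in the $\DCFm$ sense, which in the bounded-PAC setting can be strictly smaller than $\tilde E$. Using (1) to identify $\acl(E)$ with $\tilde E$, I would amalgamate $\tp(c_1/\acl(Ea))$ and $\tp(c_2/\acl(Eb))$ by first amalgamating the underlying $\DCFm$-types, which is exactly where B(3) and NFCP enter (via the machinery of \cite{HoffLee}, as in the proof of Theorem~\ref{SUrankinv}), and then checking that the common $\acl$-closed base lets the Galois-theoretic data glue; this is the step that fails for CCMA precisely because B(3) is absent. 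Finally, (6) is obtained by feeding the Independence Theorem over $\acl$-closed sets into the standard upgrade of geometric elimination of imaginaries (already known from Theorem~4.36 of \cite{Hoff3}) to full elimination of imaginaries: with $\acl=\tilde E$ and the amalgamation of (4), the canonical-base argument of \cite{ChPi} and \cite{Hoff3}, together with the coding of finite sets, codes every imaginary over a real tuple. The delicate point throughout is keeping the two notions of algebraic closure aligned, which is exactly what naming the constants $c_{n,i}$ buys us.
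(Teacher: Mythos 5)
Your overall strategy --- transporting Section 4 of \cite{ChPi} via Theorem~\ref{propertiesPAC} and the recorded tameness of $\DCFm$ --- is exactly the paper's, but the two places where the paper isolates ``the key ingredient'' are precisely the places where your sketch either stops short or asserts something false.

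First, in item (1) the easy inclusion is $\tilde E\subseteq\acl(E)$; you correctly say the content is the reverse inclusion, but your argument then only concludes that ``no element of $\tilde E$ has unboundedly many conjugates over $E$,'' which is a statement about elements of $\tilde E$ and hence proves nothing about elements outside it. To show $\acl(E)\subseteq\tilde E$ one must show that any $d\in F\setminus\tilde E$ has infinitely many realisations of its type, and for that one needs the construction the paper spells out: given a PAC-differential $(K,\D)$ and a regular differential extension $(L,\D)$, produce a PAC-differential $(K',\D)\supseteq(L,\D)$ with $\gal(K')\to\gal(K)$ an isomorphism, by taking a regular PAC-differential extension $L'/L$ (Theorem~\ref{propertiesPAC}(5)), using projectivity of $\gal(K)$ (Theorem~\ref{propertiesPAC}(3)) to split the surjection $\gal(L')\to\gal(K)$, and passing to the fixed field of the splitting, which is PAC-differential by Theorem~\ref{propertiesPAC}(4). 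This amalgamation device is entirely absent from your proposal. (Also, Lemma~\ref{lemma:good.acl} concerns $T_{\D,\S,c}$ and is downstream of this theorem, not its content.)

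Second, in items (2) and (3) you write that since $F_i/E$ is regular, ``each restriction map $\gal(F_i)\to\gal(E)$ is an isomorphism.'' Regularity only gives \emph{surjectivity} of the restriction map (compare $\mathbb Q\subseteq\mathbb Q(t)$). Injectivity is exactly what the named constants buy and is the paper's key ingredient for (2): because the polynomial $x^{N(n)}+c_{n,N(n)-1}x^{N(n)-1}+\cdots+c_{n,0}$ remains irreducible over each model $L\supseteq E$, one gets $[E_n:E]=[E_nL:L]$ for the associated Galois extensions $E_n$, hence $E^{\alg}\cap L=E$ and $L^{\alg}=LE^{\alg}$, and only then is $\gal(L)\to\gal(E)$ an isomorphism, so that Theorem~\ref{propertiesPAC}(1)--(2) apply. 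As written, your deduction of (2) and (3) from regularity alone is a genuine gap. The remaining items (4)--(6) are sketched in the same spirit as the paper and are acceptable at this level of detail once (1)--(3) are repaired.
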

\begin{proof}
As we have already mentioned the proofs are parallel to those in Section 4 of \cite{ChPi}, and these are nowadays standard model-theoretic arguments. We only make brief comments on some parts of the adaptations and leave the missing details to the reader. We note that some of these (missing) arguments are spelled out in Section \ref{imagine} below (for the case of models of $\GDCF$). 

\medskip

(1) The key ingredient in this proof (see Proposition 4.5 in \cite{ChPi}) is to show that if $(K,\D)$ is PAC-differential and $(L,\D)$ is a differential field extension with $L/K$ regular, then there is a differential field extension $(K',\D)$ of $(L,\D)$ which is PAC-differential and the restriction map $\gal(K')\to \gal (K)$ is an isomorphism. To see this, one lets $(L',\D)$ be a PAC-differential field extension of $(L,\D)$ with $L'/L$ regular (given by Theorem~\ref{propertiesPAC}). It follows that $L'/K$ is also regular, and so the restriction $\gal(L')\to \gal(K)$ is surjective. Since $\gal(K)$ is projective (by Theorem \ref{propertiesPAC}), there is a closed subgroup $G_1$ of $\gal (L')$ such that the restriction induces an isomorphism $G_1\to \gal(K)$. Let $K'$ be the subfield of $(L')^{\alg}$ fixed by $G_1$. Then, since $K'$ is an algebraic extension of $L'$, $(K',\D)$ is PAC-differential. Since $\gal(K')=G_1$, the restriction $\gal(K')\to \gal(K)$ is an isomorphism. 

\medskip

(2) Here the key ingredient is to show that if $(L,\D,c)$ is a model of $T_{\D,c}$ containing $E$ then the restriction $\gal(L)\to \gal (E)$ is an isomorphism (as then Theorem \ref{propertiesPAC} yields that $(L,\D)\equiv_{E}(F,\D)$). To see this, one simply needs to note that, since the polynomial
\begin{equation}\label{irredpoly}
x^{N(n)} + c_{n,N(n)-1}x^{N(n)-1}+\cdots+ c_{n,0}
\end{equation}
is irreducible over $L$, we have that $[E_n:E]=[E_nL:L]$ where $E_n$ is the Galois extension of $E$ (of degree $N(n)$) associated to the polynomial \eqref{irredpoly}. It follows that $E^{\alg}\cap L=E$ and $L^{\alg}=LE^{\alg}$. These equalities imply that the restriction $\gal(L)\to \gal(E)$ is an isomorphism.

\medskip

(3) Follows from (2).

\medskip

(4) By standard arguments (which are spelled out in Section \ref{imagine} below for the theory $\GDCF$). 

\medskip

(5) For $A, B, E$ subsets of $F$, say that $A$ and $B$ are independent over $E$ if $\acl(AE)$
and $\acl(BE)$ are linearly disjoint over $\acl(E)$. Note that, by (2), $\acl(E)$ is the relative field-theoretic algebraic closure in $F$ of the differential field generated by $E$ and $c$. Since this notion of independence in $T_{\D,c}$ is derived from independence in $\DCFm$, it satisfies the characteristic properties of (non-)forking in simple theories and, in particular, coincides with Shelah's notion of (non-)forking. It only remains to justify the claim on the $\SU$-rank of $T_{\D,c}$ being $\omega^m$. This is similar to the proof of Theorem~\ref{SUrankinv} above; that is, we only need to show that a saturated elementary extension of $(F,\D,c)$ contains a differentially transcendental element. To see this, let $f(x)$ be a differential polynomial over $F$. Since $(F,\D)$ is differentially large (Remark \ref{PACdiff}), Proposition 5.9 of \cite{OmarMarcus} yields that there is $a\in F$ such that $f(a)\neq 0$. The claim now follows by compactness.

\medskip

(6) By standard arguments, using the fact that $\DCFm$ eliminates imaginaries (these arguments are also spelled out in Section \ref{imagine} below for the theory $\GDCF$). 
\end{proof}

As a consequence, we get the following elimination of imaginaries result for the subfield of $G$-invariants of a model of $\GDCF$ (recall that we have seen that these subfields are bounded and PAC-differential). 

\begin{corollary}\label{cor:EI.PAC.diff}
Let $(K,\D,\S)\models \GDCF$ and $K^G$ be the subfield of invariants. Let $c$ be the (infinite) tuple from $K^G$ defined above (using boundedness of $K^G$). Then, the theory $\theo(K^G, \D,c)$ has elimination of imaginaries.
\end{corollary}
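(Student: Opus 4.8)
The plan is to obtain this as an immediate specialisation of the preceding theorem on bounded PAC-differential fields, taking the bounded PAC-differential field there to be $(K^G,\D)$. All the genuine model-theoretic content lives in that theorem (in particular its item~(6)); what remains is to check that $(K^G,\D)$ together with the tuple $c$ is an instance of the set-up $T_{\D,c}$, after which item~(6) applies verbatim.

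First I would verify the hypotheses. By Corollary~\ref{fromTCF}(1) the field $K^G$ is a bounded PAC-field, and by Proposition~\ref{strongPACdiff}(i) the differential field $(K^G,\D)$ is PAC-differential; hence $(K^G,\D)$ is a \emph{bounded} PAC-differential field. Boundedness is exactly what makes each integer $N(n)$ (the degree over $K^G$ of the composite of all its Galois extensions of degree $n$) finite, so that $T_\D:=\theo(K^G,\D)$ and its constant-enrichment $T_{\D,c}$ are well-defined as in the theorem. Moreover, by Remark~\ref{diffaut} we have $\gal_\D(K^G)=\gal(K^G)$, so passing to the relevant finite Galois extensions is compatible with the (unique) extension of $\D$; the Galois-theoretic data used to define $T_{\D,c}$ is therefore genuinely differential-field data over $K^G$.

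Next I would match the objects. The tuple $c$ named in the statement is, by its construction from boundedness, a tuple from $K^G$ witnessing the defining axioms of $T_{\D,c}$: for each $n>1$ the polynomial $x^{N(n)}+c_{n,N(n)-1}x^{N(n)-1}+\cdots+c_{n,0}$ is irreducible over $K^G$ and adjoining a root produces the Galois extension of $K^G$ containing all its degree-$n$ Galois extensions. Thus $(K^G,\D,c)\models T_{\D,c}$, and $\theo(K^G,\D,c)$ is a completion of $T_{\D,c}$. By item~(6) of the theorem $T_{\D,c}$ eliminates imaginaries, and since elimination of imaginaries is inherited by completions, $\theo(K^G,\D,c)$ eliminates imaginaries, as required. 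The only point needing a little care --- and it is bookkeeping rather than a real obstacle --- is confirming that the abstract hypotheses of the theorem (that $(F,\D,c)\models T_{\D,c}$ with the $c$-axioms realised \emph{inside} $F$) are met by our concrete $c\subseteq K^G$; this is precisely guaranteed by the way $c$ was selected using boundedness of $K^G$.
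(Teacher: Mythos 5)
Your proposal is correct and is exactly the argument the paper intends: the corollary is stated as an immediate consequence of the preceding theorem, obtained by taking the bounded PAC-differential field there to be $(K^G,\D)$ (boundedness from Corollary~\ref{fromTCF}, PAC-differentiality from Proposition~\ref{strongPACdiff}) and invoking item~(6). Your extra care in checking that $c$ witnesses the axioms of $T_{\D,c}$ inside $K^G$ is sound bookkeeping that the paper leaves implicit.
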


\

\subsection{Elimination of imaginaries for $\GDCF$}\label{imagine} Let $(F,\Delta,\Sigma)$ be a monster 
model of $\GDCF$.
By Corollary \ref{fromTCF}, $F$ is a bounded PAC-field of characteristic $0$, so we can choose an infinite tuple $c$ from $F$ coding boundedness of $F$ as was done in Section \ref{boundPACdiff} above and in 4.6 in \cite{ChPi}.
We consider a new structure
$$(F,\Delta,\Sigma,c).$$
In this section we show that the theory $T_{\D,\S,c}:=\theo(F,\Delta,\Sigma,c)$ has (full) elimination of imaginaries.
We will work in the language $\mathcal L_{\D,\S,c}$ which is the language of $G$-differential fields $\mathcal{L}_{\Delta,\Sigma}$ expanded by the symbols corresponding to the tuple of parameters $c$. For simplicity, we use ``$\ind$" instead of ``$\ind^{\GDCF}$". Moreover, as in previous sections, we let $(\U,\Delta)$ be a monster model of $\DCF_{0,m}$, containing $(F,\D)$, which we assume to be $|F|^+$-strongly homogeneous and $|F|^+$-saturated.

\begin{lemma}\label{lemma:good.acl}
Assume that $A\subseteq B\subseteq F$ are $G$-differential subfields of $F$ and $\acl^{T_{\D,\S,c}}(A)=A$.
Then
$$B^{\alg}=\acl^{T_{\D,\S,c}}(B)\cdot A^{\alg}.$$
In other words, $B^{\alg}$ is equal to the compositum of the algebraic closure of $B$ inside the $\mathcal{L}_{\D,\S,c}$-structure $F$ and the ``full" algebraic closure of $A$.
\end{lemma}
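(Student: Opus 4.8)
The plan is to first unwind the hypothesis $\acl^{T_{\D,\S,c}}(A)=A$, and then reduce the statement to a purely Galois-theoretic descent statement about the relative algebraic closure $B_F:=B^{\alg}\cap F$. Using the description of algebraic closure in $\GDCF$ (the first item in the list following Remark~\ref{remark:completeness}) together with the fact that moving to the language $\mathcal L_{\D,\S,c}$ only adds the named parameters $c$, I would record that for every $G$-differential subfield $S$ of $F$ one has
$$\acl^{T_{\D,\S,c}}(S)=\langle S\cup G\cdot c\rangle^{\alg}\cap F=(S\cdot D)^{\alg}\cap F,$$
where $D:=\langle G\cdot c\rangle$ is the $G$-invariant differential subfield generated by the orbit of $c$. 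Applying this with $S=A$, the equality $\acl^{T_{\D,\S,c}}(A)=A$ forces $A\cdot D\subseteq A$, hence $D\subseteq A$ (in particular $c\subseteq A$), and also $A^{\alg}\cap F=A$, i.e. $A$ is relatively algebraically closed in $F$. Applying the same formula with $S=B$ and using $D\subseteq A\subseteq B$ gives $\acl^{T_{\D,\S,c}}(B)=B^{\alg}\cap F=B_F$; note that $B_F$ is relatively algebraically closed in $F$, so (being in characteristic zero) the extension $F/B_F$ is regular.

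With these identifications the target reads $B^{\alg}=B_F\cdot A^{\alg}$. One inclusion is immediate, since both $B_F$ and $A^{\alg}$ sit inside $B^{\alg}$. For the reverse inclusion I would use that $B^{\alg}=B_F^{\alg}$ is the union of the finite Galois extensions of $B_F$, so it suffices to show that every finite Galois extension $M/B_F$ is contained in $B_F\cdot A^{\alg}$. This is exactly the point at which the parameters $c$ are used: they code the finite Galois theory of $F$, and since $c\subseteq A$ the coding polynomials have their coefficients in $A$.

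Concretely, put $d=[M:B_F]$. Regularity of $F/B_F$ makes $M$ and $F$ linearly disjoint over $B_F$, so $MF/F$ is Galois of degree $d$, whence $MF$ lies in the compositum $F_d$ of all degree-$d$ Galois extensions of $F$. By the axioms of $T_{\D,\S,c}$ we have $F_d=F(\alpha_d)$ with $\alpha_d$ a root of
$$x^{N(d)}+c_{d,N(d)-1}x^{N(d)-1}+\cdots+c_{d,0},$$
a polynomial with coefficients in $A$; hence $\alpha_d\in A^{\alg}$ and $[F_d:F]=N(d)$. I would then check $F_d\cap B_F^{\alg}=B_F(\alpha_d)$: the inclusion $\supseteq$ is clear, and for $\subseteq$ note that any field $W$ with $B_F(\alpha_d)\subseteq W\subseteq F_d$ and $W\subseteq B_F^{\alg}$ satisfies, by linear disjointness of $F$ and $B_F^{\alg}$ over $B_F$, the estimate $[W:B_F]=[WF:F]\leqslant[F_d:F]=N(d)=[B_F(\alpha_d):B_F]$, forcing $W=B_F(\alpha_d)$ (here $[B_F(\alpha_d):B_F]=N(d)$ because the displayed polynomial, being irreducible over $F$, is irreducible over $B_F$). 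Since $M\subseteq MF\subseteq F_d$ and $M\subseteq B_F^{\alg}$, this yields $M\subseteq B_F(\alpha_d)\subseteq B_F\cdot A^{\alg}$, and taking the union over all finite Galois $M/B_F$ gives $B^{\alg}\subseteq B_F\cdot A^{\alg}$, as required.

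The main obstacle I anticipate is not any single hard computation but the careful interface between the two notions of closure: one must extract from the model-theoretic hypothesis $\acl^{T_{\D,\S,c}}(A)=A$ the two precise field-theoretic consequences ($c\subseteq A$ and $A=A^{\alg}\cap F$), and then run the Galois descent so that the coding polynomials — whose very existence rests on boundedness of $F$ (Corollary~\ref{fromTCF}) — place a generator of the generic degree-$d$ Galois extension inside $A^{\alg}$. Once $c\subseteq A$ is secured, the remaining linear-disjointness bookkeeping is routine, but it is essential that each $F_d$ is coded by a polynomial over $A$ rather than merely over $F$.
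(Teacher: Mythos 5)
Your proof is correct, but it takes a different (more self-contained) route than the paper. The paper disposes of this lemma in two lines by citing the proof of Proposition 4.6(2) of Chatzidakis--Pillay: that argument shows the restriction maps $\gal(F)\to\gal(A)$ and $\gal(F)\to\gal(\acl^{T_{\D,\S,c}}(B))$ are isomorphisms, hence so is $\gal(\acl^{T_{\D,\S,c}}(B))\to\gal(A)$, and injectivity of this last map is precisely the asserted equality $B^{\alg}=\acl^{T_{\D,\S,c}}(B)\cdot A^{\alg}$. What you have done is re-derive the relevant content of that cited proposition in the present setting: your preliminary step (extracting $c\subseteq A$, $A=A^{\alg}\cap F$, and $\acl^{T_{\D,\S,c}}(B)=B^{\alg}\cap F$ from the formula $\acl^{T_{\D,\S,c}}(S)=\langle S\cup G\cdot c\rangle^{\alg}\cap F$) is sound and is exactly the interface the paper leaves implicit, and your Galois descent via the coding polynomials $x^{N(d)}+c_{d,N(d)-1}x^{N(d)-1}+\cdots+c_{d,0}$ together with linear disjointness from regularity of $F/B_F$ is the same mechanism that drives the Chatzidakis--Pillay proof. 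What your version buys is transparency: it makes explicit where the hypothesis $\acl^{T_{\D,\S,c}}(A)=A$ and the parameters $c$ are actually used, and it proves only the injectivity statement that is needed rather than the full isomorphism of absolute Galois groups. What the paper's formulation buys is brevity and a reusable Galois-group statement (the isomorphism $\gal(\acl^{T_{\D,\S,c}}(B))\to\gal(A)$) of the kind that recurs in the elimination-of-imaginaries argument. The only points worth a second glance in your write-up are minor: the case $d=1$ is trivial and should be set aside since $N(n)$ is only defined for $n>1$, and the finiteness of $[W:B_F]$ in your intermediate claim deserves the one-line justification you essentially give (linear disjointness of $F$ and $B_F^{\alg}$ over $B_F$ bounds $[W:B_F]$ by $[WF:F]\leqslant N(d)$).
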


\begin{proof}
The proof of the second point of Proposition 4.6 in \cite{ChPi} shows that the restriction maps $\gal(F)\to \gal(A)$ and $\gal(F)\to \gal(\acl^{T_{\D,\S,c}}(B))$ are isomorphisms. Therefore the restriction map $G(\acl^{T_{\D,\S,c}}(B))\to G(A)$ is also an isomorphism. The result now follows. 
\end{proof}

\begin{lemma}\label{lemma:ind.alg}
In the theory $T_{\D,\S,c}$, the Independence Theorem over algebraically closed substructures holds. Namely, let $E=\acl^{T_{\D,\S,c}}(E)\subseteq F$, 
$c_1\equiv_E c_2$,
$c_1\ind_E a$, $c_2\ind_E b$ and $a\ind_E b$.
Then there exists $c'\in F$ such that
$c'\equiv_{Ea} c_1$, $c'\equiv_{Eb} c_2$ and $c'\ind_E ab$.
\end{lemma}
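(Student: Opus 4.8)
The plan is to reduce the entire statement to an amalgamation problem in the stable theory $\DCFm$, where stationarity makes the Independence Theorem over field-theoretically algebraically closed bases automatic, and then to transport the resulting amalgam back to $T_{\D,\S,c}$ by means of the completeness criterion of Remark~\ref{remark:completeness} together with Lemma~\ref{lemma:good.acl}. First I would record some simplifications. Since $E=\acl^{T_{\D,\S,c}}(E)$ is a $G$-differential subfield it is $\Sigma$-closed, so $\langle G\cdot E\rangle=E$ and, by Fact~\ref{factsDCF}, $\acl^{\DCFm}(E)=E^{\alg}$; moreover $E=E^{\alg}\cap F$. Because $c\subseteq E$, the forking calculus of $T_{\D,\S,c}$ is governed by that of $\DCFm$ via $X\ind_Y Z\iff G\cdot X\ind^{\DCFm}_{G\cdot Y}G\cdot Z$. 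Thus the three hypotheses $a\ind_E b$, $c_1\ind_E a$, $c_2\ind_E b$ become $\DCFm$-freeness statements, over $E$, of the differential fields $\langle G\cdot(Ea)\rangle$, $\langle G\cdot(Eb)\rangle$, $\langle G\cdot(Ec_1)\rangle$, $\langle G\cdot(Ec_2)\rangle$, while $c_1\equiv_E c_2$ yields (via semi-quantifier elimination) a $G$-differential $E$-isomorphism $\langle G\cdot(Ec_1)\rangle\to\langle G\cdot(Ec_2)\rangle$ commuting with $\Sigma$ and fixing the constants $c$.

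Next I would carry out the amalgamation inside the $\DCFm$-monster $\U$. The key preliminary step is to upgrade the data from $E$ to $E^{\alg}$, over which $\DCFm$-types are stationary. Here the parameters $c$ are essential: by Lemma~\ref{lemma:good.acl} (more precisely the Galois-theoretic content of its proof) the restriction maps $\gal(F)\to\gal(E)$, $\gal(F)\to\gal(\acl^{T_{\D,\S,c}}(Ea))$ and $\gal(F)\to\gal(\acl^{T_{\D,\S,c}}(Eb))$ are isomorphisms, so that knowing an $\mathcal L_{\D,\S,c}$-type over $E$ (resp.\ over $Ea$, $Eb$) pins down the full Galois position of the relevant orbit over $E^{\alg}$. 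In particular $c_1\equiv_E c_2$ forces $\tp^{\DCFm}(G\cdot c_1/E^{\alg})=\tp^{\DCFm}(G\cdot c_2/E^{\alg})$, and $c_1\ind_E a$, $c_2\ind_E b$ become freeness over $E^{\alg}$. By $\omega$-stability of $\DCFm$ this common type over $E^{\alg}$ has a unique nonforking extension to $\langle G\cdot(Eab)\rangle^{\alg}$; realising it produces an orbit $G\cdot c'$ in $\U$ with $G\cdot c'\ind^{\DCFm}_{E^{\alg}}\langle G\cdot(Eab)\rangle$, whose restriction to $\langle G\cdot(Ea)\rangle^{\alg}$ (resp.\ $\langle G\cdot(Eb)\rangle^{\alg}$) equals $\tp^{\DCFm}(G\cdot c_1/\cdot)$ (resp.\ $\tp^{\DCFm}(G\cdot c_2/\cdot)$). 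The freeness statement is exactly $c'\ind_E ab$.

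I would then promote this $\DCFm$-amalgam to the $G$-differential language and descend it into $F$. The orbit $G\cdot c'$ inherits an action of $\Delta$ and of each $\sigma_g$ from the two prescribed sides; these actions agree because both restrict over $E$ to the fixed $G$-differential structure, and because the matching has been arranged over the field-theoretic closures. This endows $\langle G\cdot(Eab),G\cdot c'\rangle$ with a $G$-differential structure extending that of $F$ over $Eab$; embedding it into a model of $\GDCF$ and then, by saturation of the monster $F$, over $Eab$ into $F$, I may take $c'\in F$. To identify the types, note that $\acl^{T_{\D,\S,c}}(Ea)=\langle G\cdot(Ea)\rangle^{\alg}\cap F$ is a common $G$-differential substructure over which the isomorphism constructed above matches the $\Sigma$-structures of $c_1$ and $c'$; Remark~\ref{remark:completeness} then gives $c'\equiv_{Ea}c_1$, and symmetrically $c'\equiv_{Eb}c_2$.

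The main obstacle is precisely the upgrade from $E$ to $E^{\alg}$ in the second step. In a merely simple theory the Independence Theorem may fail over real (as opposed to $\acl^{\eq}$-closed) algebraically closed sets, and in $\GDCF$ the model-theoretic algebraic closure is strictly smaller than the field-theoretic one; consequently $E$ is not $\DCFm$-algebraically closed and the naive $E$-isomorphisms coming from $c_1\equiv_E c_2$ need not extend across the ``Galois gap'' $E\subsetneq E^{\alg}$, so that $\tp^{\DCFm}(G\cdot c_1/E^{\alg})$ and $\tp^{\DCFm}(G\cdot c_2/E^{\alg})$ could a priori differ. The device that removes this obstruction is the tuple $c$ coding boundedness of $F$: through the Galois isomorphisms supplied by Lemma~\ref{lemma:good.acl} it guarantees that the $T_{\D,\S,c}$-type over $E$ (and over $Ea$, $Eb$) already determines the Galois-theoretic position over $E^{\alg}$, so that the stationary $\DCFm$-amalgam respects the $\Sigma$-action prescribed by $\tp(c_1/Ea)$ and $\tp(c_2/Eb)$ simultaneously. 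Verifying this compatibility is where the real work lies, and is exactly the point at which the parameters $c$ and Lemma~\ref{lemma:good.acl} are indispensable.
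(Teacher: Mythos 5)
Your overall architecture --- amalgamate at the level of $\DCFm$ using stationarity over field-theoretically algebraically closed bases, control the Galois gap between $\acl^{T_{\D,\S,c}}$ and field-theoretic algebraic closure via the parameters $c$ and Lemma \ref{lemma:good.acl}, and descend into $F$ via Remark \ref{remark:completeness} --- is essentially the paper's. But there is a genuine gap at the step where you assert that the $\Delta$- and $\Sigma$-actions induced on (the algebraic closure of) $\langle G\cdot(Eab), G\cdot c'\rangle$ ``from the two prescribed sides \dots agree because both restrict over $E$ to the fixed $G$-differential structure, and because the matching has been arranged over the field-theoretic closures.'' This is precisely where the work lies, and Lemma \ref{lemma:good.acl} alone does not deliver it. That lemma controls the \emph{vertical} gap $B^{\alg}=\acl^{T_{\D,\S,c}}(B)\cdot E^{\alg}$ over a single base; what you actually need is control of the \emph{horizontal} overlap of the pieces carrying prescribed difference structure, namely the identity
$$\acl^{T_{\D,\S,c}}(Eab)\,\acl^{T_{\D,\S,c}}(Eac_1)\;\cap\;\bigl(\acl^{T_{\D,\S,c}}(Eb)\,\acl^{T_{\D,\S,c}}(Ec_1)\bigr)^{\alg}\;=\;\acl^{T_{\D,\S,c}}(Eb)\,\acl^{T_{\D,\S,c}}(Ec_1),$$
i.e.\ that the compositum $D=\acl^{T_{\D,\S,c}}(Eab)\,\acl^{T_{\D,\S,c}}(Eac_1)$ is a \emph{regular} extension of $BC_1$. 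Only with this regularity can one amalgamate (via Fact 3.33 of \cite{Hoff3}) the $\sigma$-action already living on $D$ with the twisted action transported from the $(b,c_2)$-side, without the two prescriptions colliding on the overlap.

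The paper isolates this regularity as a separate Claim, and its proof requires the boundary property $B(3)$ of $\DCFm$: one first bounds the intersection by $(AB)^{\alg}(AC_1)^{\alg}\cap(BC_1)^{\alg}=B^{\alg}C_1^{\alg}$ (this equality \emph{is} $B(3)$), and only then does Lemma \ref{lemma:good.acl} reduce $B^{\alg}C_1^{\alg}$ to $BC_1E^{\alg}$, with a final regularity argument bringing it down to $BC_1$. Your proposal never mentions $B(3)$, and it cannot be dispensed with: as the paper notes, for CCMA --- likewise a model-complete theory with a group action, enjoying geometric elimination of imaginaries and the independence theorem over models --- the failure of $B(3)$ is exactly what blocks the independence theorem over real algebraically closed sets (and hence full elimination of imaginaries). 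So ``the matching has been arranged over the field-theoretic closures'' is not a proof; it is the statement that needs $B(3)$. To repair the argument, insert the Claim that $D$ is regular over $BC_1$ and prove it as in the paper, using $B(3)$ followed by Lemma \ref{lemma:good.acl}.
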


\begin{proof}
The proof is a modification of some standard proofs (e.g., the proof of Theorem 4.21 from \cite{Hoff3} or the proof of Theorem 4.7 from \cite{ChPi}), but we provide a sketch to clarify how the different assumptions interplay in our situation.
The main ingredients here are the boundary property $B(3)$ from $\DCF_{0,m}$ and Lemma \ref{lemma:good.acl}.

By the properties of $\ind$, we assume that we also have $c_1\ind_E ab$.
Let $A:=\acl^{T_{\D,\S,c}}(Ea)$, $B:=\acl^{T_{\D,\S,c}}(Eb)$, $C_1:=\acl^{T_{\D,\S,c}}(Ec_1)$, $C_2:=\acl^{T_{\D,\S,c}}(Ec_2)$
and $D:=\acl^{T_{\D,\S,c}}(Eab) \acl^{T_{\D,\S,c}}(Eac_1)$.

\bigskip
\noindent\textbf{Claim.} $D$ is regular over $B C_1$.
\begin{proof}[Proof of Claim] We need to show that
$$\acl^{T_{\D,\S,c}}(Eab) \acl^{T_{\D,\S,c}}(Eac_1)\;\cap\;(BC_1)^{\alg}=B C_1.$$
Because $\acl^{T_{\D,\S,c}}$ is equal to $\acl^{\GDCF}$ computed after adding parameters from $c$, we can rewrite the left-hand side of the above line as: 
$$\acl^{\GDCF}(AB) \acl^{\GDCF}(AC_1)\;\cap\;(BC_1)^{\alg},$$
which is a subset of $(AB)^{\alg}(AC_1)^{\alg}\cap(BC_1)^{\alg}$, which is equal - by the boundary property $B(3)$ - to $B^{\alg}  C_1^{\alg}$. 
Now, by Lemma \ref{lemma:good.acl}, we have that $B^{\alg}=B  E^{\alg}$ and $C_1^{\alg}=C_1  E^{\alg}$, so finally the aforementioned left-hand side is contained in $B  C_1  E^{\alg}$.
If $m\in \acl^{T_{\D,\S,c}}(Eab) \acl^{T_{\D,\S,c}}(Eac_1)\;\cap\;(BC_1)^{\alg}$, then $m\in F\cap(BC_1)^{\alg}=\acl^{T_{\D,\S,c}}(BC_1)$ and $m\in B  C_1  E^{\alg}$. 
We know that $E$ is relatively algebraically closed in $\acl^{T_{\D,\S,c}}(BC_1)$, so $E\subseteq \acl^{T_{\D,\S,c}}(BC_1)$ is regular. If $f\in\aut_{\Delta}(\U/BC_1)$, then - by Fact 3.33 from \cite{Hoff3} - there exists $h\in\aut_{\Delta}(\U/BC_1E^{\alg})$ such that 
$$h|_{\acl^{T_{\D,\S,c}}(BC_1)}=f|_{\acl^{T_{\D,\S,c}}(BC_1)},$$ thus $f(m)=h(m)=m$. Therefore $m\in\dcl^{\DCF_{0,m}}(BC_1)=B  C_1$ and that finishes the proof of the claim.
\end{proof}

Now, stability of $\DCF_{0,m}$ comes into the picture.
Because $c_1\equiv_E c_2$, there is $f_0\in\aut_{\D,\S,c}(F/E)$ such that $f_0(c_1)=c_2$.
Then, because $C_1\ind_E B$, $C_2\ind_E B$ and $B$ is regular over $E$, Corollary 3.38 from \cite{Hoff3} implies the existence of $f\in\aut_{\Delta}(\U/B)$ such that $f|_{C_1}=f_0|_{C_1}$, hence $f:\langle C_1,B\rangle\to \langle C_2,B\rangle$ is an isomorphism  of $\mathcal L_{\D,\S,c}$-structures (indeed, it preserves the $G$-differential structure and parameter $c$).
Now, let $C:=f^{-1}(\acl^{T_{\D,\S,c}}(C_2B))$, we have that $\langle C_1,B\rangle\subseteq C\subseteq \acl^{T_{\D,\S,c}}(C_1B)$ and that
$f:(C,\Sigma')\to (\acl^{T_{\D,\S,c}}(Ec_2b),\Sigma)$, where $\Sigma'=(f^{-1}\sigma f)_{\sigma\in\Sigma}$,
is an isomorphism of $\mathcal L_{\D.\S,c}$-structures (note that $\Sigma'|_{\langle C_1,B\rangle}=\Sigma|_{\langle C_1,B\rangle}$).

Let, for each $\sigma\in\Sigma$, $h_{\sigma}\in\aut_{{\Delta}}(\U)$ extend $f^{-1}\sigma f|_{C}$.
Because $BC_1\subseteq D$ is regular, Fact 3.33 from \cite{Hoff3}, allows us to find $\hat{\sigma}\in\aut_{\Delta}(\U)$ such that $\hat{\sigma}|_D=\sigma|_D$ and $\hat{\sigma}|_{(BC_1)^{\alg}}=h_{\sigma}|_{(BC_1)^{\alg}}$. We set $\hat{\Sigma}=(\hat{\sigma})_{\sigma\in\Sigma}$
and note that $(\langle D,C\rangle,\Delta,\hat{\Sigma})$ extends, simultaneously, the $G$-differential structures of $(D,\Delta,\Sigma)$ and $(C,\Delta,\Sigma')$. We embed $(\langle D,C\rangle,\Delta,\hat{\Sigma})$ into some model of $\GDCF$,
say $(F'',\Delta'',\Sigma'')$. Quantifier elimination of $\DCF_{0,m}$ allows us to view $(F'',\Delta'')$ inside $(\U,\Delta)$. Because $(B,\Delta,\Sigma)$ is a common substructure of $(F,\Delta,\Sigma)$ and of $(F'',\Delta'',\Sigma'')$, Remark \ref{remark:completeness}, implies that $(F,\Delta,\Sigma)$ and $(F'',\Delta'',\Sigma'')$ have the same $\mathcal{L}_{\Delta,\Sigma}$-theory over $B$, 
and since $c$ is contained in $B$, also the same $\mathcal L_{\D,\S,c}$-theory over $B$. Therefore we may embed $(F'',\Delta'',\Sigma'')$ into $(F,\Delta,\Sigma)$ over $B$ as an elementary $\mathcal L_{\D,\S,c}$-substructure.
Let $a',c_1'$ denote the images (under the described embedding) of $a,c_1$ respectively.
By Fact 4.9 from \cite{Hoff3}, we have that $a'b\equiv_E ab$, $a'c_1'\equiv_E ac_1$ and $bc_1'\equiv_E bc_1$ (in $\mathcal L_{\D,\S,c}$). Hence, there exists $g\in\aut_{\D,\S,c}(F/Eb)$ such that $g(a')=a$. For $c':=g(a')$, we have $c'\equiv_{Ea} c_1$ and
$c'\equiv_{Eb} c_2$. It is left to show that $c'\ind_E ab$.

We know that $c_1\ind_E ab$. We placed both $(F,\Delta)$ and $(F'',\Delta'')$ inside $(\U,\Delta)$
and we know that the forking independence in $\GDCF$ is given by the forking independence in $\DCF_{0,m}$ (after taking orbits of the $G$-action). Therefore $c_1\ind_E ab$ transforms into $c_1'\ind_E a'b$, which becomes (after acting via $g$) $c'\ind_E ab$.
\end{proof}

\begin{theorem}\label{thm:full.EI}
The $\mathcal L_{\D,\S,c}$-theory $T_{\D,\S,c}$ has elimination of imaginaries.
\end{theorem}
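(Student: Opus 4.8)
The plan is to derive full elimination of imaginaries from the ingredients now in place, following the template of Section~4 of \cite{ChPi}. Three facts are available: the theory $T_{\D,\S,c}$ already codes finite sets and has geometric elimination of imaginaries (these properties are inherited from $\GDCF$ via \cite{Hoff3}, and adjoining the constants $c$ does not disturb them), the stable reduct $\DCFm$ eliminates imaginaries, and---crucially---the Independence Theorem now holds over \emph{algebraically closed} substructures by Lemma~\ref{lemma:ind.alg}. For a simple theory, full EI follows once every imaginary is shown to be interdefinable with a tuple from the home sort, and since finite sets are already coded, it suffices to upgrade geometric EI to \emph{weak} EI: every imaginary $e$ is interalgebraic with a real tuple $c$ that moreover lies in $\dcl^{\operatorname{eq}}(e)$. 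Thus the entire task reduces to a coding statement for canonical parameters, and the new input that makes this possible is precisely the availability of amalgamation over $\acl^{\operatorname{eq}}$-closed (rather than merely elementary) bases.

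Concretely, I would run the standard Hrushovski-style coding argument. Given an imaginary $e$, choose a real tuple $a$ with $e\in\dcl^{\operatorname{eq}}(a)$ and set $E=\acl^{\operatorname{eq}}(e)$. Take two realizations $a_1,a_2$ of $\tp(a/E)$ with $a_1\ind_E a_2$, so that $e\in\dcl^{\operatorname{eq}}(a_1)\cap\dcl^{\operatorname{eq}}(a_2)$. The Independence Theorem over the algebraically closed base $E$ (Lemma~\ref{lemma:ind.alg}) then supplies the amalgamation needed to show that the canonical parameter of $\tp(a/E)$ is fixed by exactly those automorphisms fixing $e$ up to finite ambiguity; equivalently, $e$ is interdefinable with the canonical base of this type. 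Here is where EI of the reduct enters: the canonical base, computed in the $\omega$-stable theory $\DCFm$ which eliminates imaginaries, can be taken to be a \emph{real} tuple $c$, and Lemma~\ref{lemma:good.acl} is used to control the passage between $\acl^{T_{\D,\S,c}}$ and the full field-theoretic algebraic closure so that $c$ genuinely lies in $F$ and in $\dcl^{\operatorname{eq}}(e)$. Coding of the finitely many conjugates (via the already-established coding of finite sets) removes the residual multiplicity, yielding a real tuple interdefinable with $e$.

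The main obstacle I anticipate is exactly the mismatch between model-theoretic algebraic closure in $T_{\D,\S,c}$ and the field-theoretic (Galois) algebraic closure---the same difficulty that forced the introduction of the constants $c$ in the first place. Keeping the amalgamated $G$-differential structure produced by Lemma~\ref{lemma:ind.alg} inside a model of $\GDCF$ and then re-embedding it into $(F,\D,\S)$ as an \emph{elementary} $\mathcal L_{\D,\S,c}$-substructure (using Remark~\ref{remark:completeness} over a suitable $\acl$-closed base, as in the proof of Lemma~\ref{lemma:ind.alg}) is the delicate step: one must ensure that the real witness $c$ extracted from the stable reduct is preserved under these transfers and that the boundary property $B(3)$, which underlies Lemma~\ref{lemma:ind.alg}, is genuinely what controls the intersections $(AB)^{\operatorname{alg}}(AC)^{\operatorname{alg}}\cap(BC)^{\operatorname{alg}}$ that appear. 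Once this bookkeeping is in place, the argument is parallel to \cite{ChPi} and the conclusion that $T_{\D,\S,c}$ eliminates imaginaries follows.
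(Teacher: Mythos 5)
Your overall strategy coincides with the paper's: use coding of finite sets together with geometric elimination of imaginaries to reduce full EI to weak EI, and obtain weak EI from the Independence Theorem over algebraically closed substructures (Lemma \ref{lemma:ind.alg}), with Lemma \ref{lemma:good.acl} handling the Galois-theoretic bookkeeping. The problem is that the one step carrying all the weight --- actually deriving weak EI from the amalgamation --- is the step where your proposed mechanism does not work. First, Lemma \ref{lemma:ind.alg} amalgamates types over \emph{real} algebraically closed substructures $E=\acl^{T_{\D,\S,c}}(E)\subseteq F$, not over $\acl^{\eq}(e)$ for an imaginary $e$; one must work over $C:=\acl^{\eq}(e)\cap F$, using geometric EI to guarantee $e\in\acl^{\eq}(C)$, so that realizations of $\tp(a/Ce)$ still satisfy $f(\cdot)=e$ while the base remains in the home sort. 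Second, the appeal to ``the canonical base computed in the $\omega$-stable theory $\DCFm$'' does not produce a code for $e$: the element $e$ is a class of an $\mathcal L_{\D,\S,c}$-definable equivalence relation, not an $\mathcal L_{\D}$-definable one, so elimination of imaginaries in the reduct $\DCFm$ says nothing about it, and in a merely simple (unstable) theory the ``canonical parameter of $\tp(a/E)$'' you invoke is not an object you can simply extract. Neither the paper nor \cite{ChPi} uses canonical bases at this point.

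What is actually needed, and what the paper does, is a direct contradiction argument showing $e\in\dcl^{\eq}(C)$. Write $e=f(a)$, let $Q$ be the orbit of $a$ under $\aut_{\D,\S,c}(F/C)$, and find $u\in Q$ with $f(u)=e$ and $u\ind_C a$ (possible because $e\in\acl^{\eq}(C)$, so a nonforking $Ce$-conjugate of $a$ is also independent over $C$). If $e\notin\dcl^{\eq}(C)$, there is $d_0\in Q$ with $f(d_0)\neq e$; replace it by $d\equiv_C d_0$ with $d\ind_C au$. Now $\tp(u/Ca)$ and $\tp(d/Cu)$ are nonforking extensions of the single type $\tp(a/C)$, and Lemma \ref{lemma:ind.alg} produces $m$ with $m\equiv_{Ca}u$, $m\equiv_{Cu}d$ and $m\ind_C au$. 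Since $f(x)=f(a)$ lies in $\tp(u/Ca)$ and $f(x)\neq f(u)$ lies in $\tp(d/Cu)$, we get $f(m)=e$ and $f(m)\neq e$ simultaneously, a contradiction. With this step supplied in place of the canonical-base detour, your outline closes exactly as the paper's proof does.
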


\begin{proof}
Recall that $(F,\Delta,\Sigma,c)$ is a monster model of $T_{\D,\S,c}$.
Let $e$ be an element of $(F,\Delta,\Sigma,c)^{\eq}$ given by a $\emptyset$-definable function $f$ and a finite tuple $a\subseteq F$, i.e. $f(a)=e$. Let $C:=\acl^{T_{\D,\S,c}^{\eq}}(e)\cap F$
and let $Q:=\aut_{\D,\S,c}(F/C)\cdot a$. 
The claim from the proof of Theorem 4.36 from \cite{Hoff3} holds in our situation, hence there is $u\in Q$ such that $f(u)=e$ and $a\ind_C u$.

Let $D:=\{d\in Q\;|\;f(d)=e\}$.
If $D=Q$, then $e\in\dcl^{\GDCF^{\eq}}(C)$ and we can find an element $b\in C$ such that $e\in\dcl^{\GDCF^{\eq}}(b)$.
Since $b\in C=\acl^{\GDCF^{\eq}}(e)$, we obtain weak elimination of imaginaries, which combined with the fact that we are in theory of fields (hence finite sets can be coded),
gives us full elimination of imaginaries in $T_{\D,\S,c}$.

Now, we assume that $D\subsetneq Q$.
Take some $d_0\in Q\setminus D$ and $d\equiv_C d_0$ such that $d\ind_C D$.
If $f(d)=e$, then $d\in D$ and we obtain that $d\in C$, hence again $e\in \dcl^{\GDCF^{\eq}}(C)$ and we get full elimination of imaginaries in $T$. Suppose that $f(d)\neq e$. We have that
$a,u,d$ have the same type over $C$ in $T$, 
$d\ind_C au$, $a\ind_C u$, and $f(a)=f(u)=e$. 

In the next step, we would like to use the Independence Theorem, but over the set $C$ instead over a model - this is the reason for providing the above Lemma \ref{lemma:ind.alg}.
We have $\tp^{T_{\D,\S,c}}(u/C)=\tp^{T_{\D,\S,c}}(d/C)$, non-forking extensions $\tp^{T_{\D,\S,c}}(u/C)\subseteq\tp^{T_{\D,\S,c}}(u/Ca)$
and $\tp^{T_{\D,\S,c}}(d/C)\subseteq\tp^{T_{\D,\S,c}}(d/Cu)$, and we know that $a\ind_C u$. Therefore (by Lemma \ref{lemma:ind.alg}) there exists $m\in F$ such that $m\equiv_{Ca}u$, $m\equiv_{Cu}d$ and $m\ind_C au$.
But then $f(m)=f(a)=f(u)\neq f(m)$, so it can not happen that $f(d)\neq e$.
\end{proof}

\begin{remark}
In Theorem 4.10 from \cite{HoffKow2018} it is not clear whether it is enough to add only parameters indicated in the statement.
Now, our Theorem \ref{thm:full.EI} implies that the theory $G$-TCF (working in characteristic zero case) has (full) elimination of imaginaries after adding the tuple $c$ as above - it is enough to put $m=0$ (the number of derivations) in $\GDCF$.
\end{remark}

\section{Final remarks}\label{sec:final}
In this last section, we briefly touch on the general case of the theory of $G$-differential fields in characteristic zero 
for an \emph{arbitrary} group $G$ (not necessarily finite). As for the case of finite $G$, we denote this theory by ``$\GDF$" and its model companion, \emph{if it exists}, by ``$\GDCF$".
A natural question is for which $G$ the model companion $\GDCF$ exists. It is not an easy question even in the case of $m=0$ (``pure" fields with $G$-action) and only partial results have been achieved (see \cite{Sjo} and \cite{BeyKow2018}, \cite{BK2}). We start with a negative observation:

\subsection{Two commuting automorphisms}
By an argument of Hrushovski, it is well known that the theory $\GTCF$ does not exist for $G=\mathbb{Z}\times\mathbb{Z}$
(the model companion of the theory of fields with two commuting automorphisms). The proof was published, for example, in \cite{Kikyo1} and we may adapt it to show that similarly the theory $\GDCF$ does not exist for $G=\mathbb{Z}\times\mathbb{Z}$. Instead of rewriting the whole proof, we will only indicate the key differences.
Here, we set $T$ to be theory of fields with $m$ commuting derivations and two commuting automorphism (also commuting with the derivations; that is, they are differential automorphisms).

First, we need to adapt Lemma 3.1 from \cite{Kikyo1} to our difference-differential setting.
Our adaptation says that for any existentially closed model $(F,\Delta,\sigma,\tau)$ of $T$, for any $n\geqslant 2$
there is a $c\in \ker \D$ such that $\sigma(c)=\tau(c)$, $c+\sigma(c)+\ldots+\sigma^{n-1}(c)=0$ and
$c+\sigma(c)+\ldots+\sigma^{k-1}(c)\neq 0$ for any $k<n$. The argument from \cite{Kikyo1} works in our case if we 
extend the differential structure to $F(t_0,\ldots,t_{n-2})$ via $\delta_i(t_j)=0$, where $i\leqslant m$, $j\leqslant n-2$.

Similarly, we need to adapt the proof of Theorem 3.2 from \cite{Kikyo1} to obtain the following:

\begin{theorem}
For $G=\mathbb{Z}\times\mathbb{Z}$, the theory $\GDF$ does not have a model-companion.
\end{theorem}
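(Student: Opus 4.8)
The plan is to argue by contradiction, following closely the structure of the proof of Theorem 3.2 in \cite{Kikyo1} (Hrushovski's argument for two commuting automorphisms), and to indicate at each step how the purely field-theoretic constructions there are upgraded to the $G$-differential setting. So suppose, for contradiction, that $\GDF$ with $G=\mathbb{Z}\times\mathbb{Z}=\langle\sigma,\tau\rangle$ admits a model companion $\GDCF$; then the models of $\GDCF$ are exactly the existentially closed models of $T$, and $\GDCF$ is model-complete.

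First I would establish the differential analogue of Lemma 3.1 of \cite{Kikyo1} stated in the discussion above: in every existentially closed $(F,\D,\sigma,\tau)\models T$ and every $n\geqslant 2$ there is a constant $c\in\ker\D$ with $\sigma(c)=\tau(c)$ such that, writing $d_k(c)=c+\sigma(c)+\cdots+\sigma^{k-1}(c)$, one has $d_n(c)=0$ while $d_k(c)\neq 0$ for $1\leqslant k<n$. The only modification to Kikyo's proof is the choice of derivations on the auxiliary extension: one runs his construction over $F(t_0,\dots,t_{n-2})$ and extends $\D$ by declaring $\delta_i(t_j)=0$ for all $i\leqslant m$ and $j\leqslant n-2$. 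Since the $t_j$ are then constants, the automorphisms built from the shift data (as in \cite{Kikyo1}) automatically commute with every $\delta_i$, so the extension is a genuine $G$-differential field and the witness $c$ produced lands in $\ker\D$, as required.

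With the Lemma in hand, I would transcribe the proof of Theorem 3.2 of \cite{Kikyo1}: a compactness argument using the sentences $\exists c\,\psi_n(c)$ (each a theorem of $\GDCF$ by the Lemma) produces, in a sufficiently saturated model of $\GDCF$, an element exhibiting the incompatible periodic behaviour that Hrushovski's construction exploits, and the existential closedness (equivalently, model-completeness) of $\GDCF$ is then contradicted by amalgamating two $G$-differential extensions in which the period data disagree. At every stage where \cite{Kikyo1} adjoins elements to a field and extends $\sigma,\tau$, I would adjoin the same elements as differential constants, extending each $\delta_i$ by $0$ on the new transcendentals, exactly as in the Lemma; the verifications that the $\delta_i$ remain commuting and commute with the extended $\sigma$ and $\tau$, and that $\sigma\tau=\tau\sigma$ persists on the adjoined elements, are then immediate because all new generators are constants.

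The main obstacle I anticipate is precisely this differential bookkeeping: one must check, at each amalgamation or extension step borrowed from \cite{Kikyo1}, that the trivial (zero) extension of $\D$ is consistent with the prescribed action of $\sigma$ and $\tau$, i.e. that the elements Kikyo adjoins can always be taken inside $\ker\D$ without disturbing the relations $\sigma(c)=\tau(c)$, the inequalities $d_k(c)\neq 0$, and $\sigma\tau=\tau\sigma$. Because $\ker\D$ is itself a field on which $\sigma$ and $\tau$ restrict to commuting automorphisms, and because in characteristic zero derivations extend (uniquely on algebraic parts, freely on transcendentals) to any field extension, this reduces to the single observation already used for the Lemma. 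Hence no genuinely new difficulty beyond Kikyo's combinatorics arises; the content of the theorem is inherited from \cite{Kikyo1} once the derivations are placed correctly.
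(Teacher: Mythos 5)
Your proposal follows exactly the route the paper takes: both reduce to Hrushovski's argument as written up in Kikyo's Theorem 3.2, with the single new ingredient being that every element adjoined in that construction (the transcendentals $t_j$ in the analogue of Lemma 3.1, and the elements $x$, $y$ in the analogue of Claim 3.2.1) is placed in $\ker\Delta$, so the derivations extend by zero on new transcendentals and uniquely (hence again by zero) on the algebraic elements, and commute with the extended $\sigma$ and $\tau$. This matches the paper's proof, so no further comparison is needed.
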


\begin{proof}
We follow the lines from the proof of Theorem 3.2 in \cite{Kikyo1}. The first difference is that instead of $(K_0,\sigma_0,\sigma_0)$ we consider $(K_0,\Delta_0,\sigma_0,\sigma_0)$, where $\Delta_0$ are trivial derivations (i.e. $\ker\Delta_0=K_0$). We embed it into a somehow saturated model $(K,\Delta,\sigma,\tau)$ of the theory of $\GDCF$.

The second difference shows up in the Claim 3.2.1 there, which takes the following form:
$$\sigma(z)=\tau(z),\,z+\sigma(z)+\ldots+\sigma^k(z)\neq 0\text{ for }k<\omega,\;z\in\ker\Delta$$
$$\vdash(\exists x) (\exists y)\big(\sigma(x)=\tau(x)=x+z\;\wedge\;y^3=x\;\wedge\;\tau(y)=\zeta\sigma(y) \big)$$
The proof of the new version of the claim requires extending the differential structure onto $E(a)$, which we do by
$\delta'_i(a)=0$ for every $i\leqslant m$ (then $\sigma'$, $\tau'$ and all $\delta'_i$'s commute on $E(a)$).
After this, we need to define the value of each $\delta'_j$ on every $b_i$, but there is no choice and it needs to be $\delta'_j(b_i)=0$. This way we obtain the claim.

The rest of the proof from \cite{Kikyo1} goes through if we use our versions of the claim and Lemma 3.1 (described above).
\end{proof}

\subsection{On the bright side}
We now know that $\GDCF$ exists for finite groups; and also for $G$ being a finitely generated free group \cite{OmarRahim}.
So the situation is similar to that of the theory $\GTCF$ before  the manuscript \cite{BeyKow2018}, where the authors combined finite groups and finitely generated free groups into virtually free groups and showed existence of $\GTCF$ for every $G$ finitely generated virtually free group. Our intuition is that we one might be able to adapt methods from \cite{BeyKow2018} to obtain that $\GDCF$ exists for all finitely generated virtually free groups. However, at the moment, it is unclear how to generalize Cohn's difference-differential basis theorem \cite{Cohn1970} to the setup with difference structure given by the action of a group $G$. 
We intend to investigate this problem in a forthcoming manuscript.

\medskip

We wish to note that the existence of $\GDCF$ for infinite groups serves for the purpose of developing neostability; especially in the class of simple and NSOP$_1$ theories. The crucial observation here is that the model-theoretic properties of models of $\GDCF$ (if it exists) depend on the properties of their absolute Galois groups (as in \cite{Hoff3}), and the properties of their absolute Galois groups depend on the properties of the group $G$ (which is the case for theories with group actions, see Section 5 in \cite{Hoff4}). For example,
we have the following interesting result, expressing that simplicity (on different levels) is equivalent to having a small absolute Galois group (i.e. boundedness):

\begin{proposition}\label{prop:equivalent}
Assume that $G$ is a group (possibly infinite) and that $\GDCF$ exists (i.e., the theory of $G$-differential fields in characteristic zero has a model-companion). Let $(K,\Delta,\Sigma)\models\GDCF$.
The following are equivalent:
\begin{enumerate}
    \item $K$ is bounded,
    \item $K$ is simple,
    \item $(K,\Delta)$ is simple,
    \item $(K,\Delta,\Sigma)$ is simple.
\end{enumerate}
\end{proposition}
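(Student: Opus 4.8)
The plan is to establish the cycle of implications $(1)\Rightarrow(4)\Rightarrow(3)\Rightarrow(2)\Rightarrow(1)$. The two middle arrows are soft and rest on the general principle that simplicity is inherited by reducts. Indeed, $\theo(K,\Delta)$ is the $\mathcal{L}_\Delta$-reduct of $\theo(K,\Delta,\Sigma)$, and the pure-field theory $\theo(K)$ is the $\mathcal{L}_{rings}$-reduct of $\theo(K,\Delta)$. Passing to a monster model of the full theory, every formula of a smaller language is also a formula of the larger one, so an indiscernible tree witnessing the tree property for a reduct formula already witnesses it in the richer structure. Hence simplicity of $(K,\Delta,\Sigma)$ forces simplicity of $(K,\Delta)$, which in turn forces simplicity of the pure field $K$; this yields $(4)\Rightarrow(3)$ and $(3)\Rightarrow(2)$.

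For $(2)\Rightarrow(1)$ I would first record that the pure field $K$ is PAC. In the finite case this is Corollary~\ref{fromTCF}; for an arbitrary $G$ (assuming $\GDCF$ exists) it persists because an existentially closed $G$-differential field is a PAC substructure of $\DCFm$ in the sense of \cite{Hoff3}, that is, a PAC-differential field, and such fields are PAC by (2) of Remark~\ref{PACdiff}. Concretely, given an absolutely irreducible variety $V$ over $K$, one builds a $G$-differential field extension of $(K,\Delta,\Sigma)$ containing a generic point of $V$ --- taking independent generic points of the conjugate varieties $V^{\sigma_g}$, letting $G$ permute them, and extending the derivations (always possible in characteristic zero) --- so that existential closedness delivers a $K$-rational point. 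Granting that $K$ is PAC, the characterization of simplicity for PAC-fields (a PAC-field of characteristic zero is simple if and only if it is bounded, Fact 2.6.7 of \cite{Kimsim}, the same source invoked in Proposition~\ref{simplenotstable}) shows immediately that $K$ simple implies $K$ bounded.

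The substantive step is $(1)\Rightarrow(4)$, where I would invoke the general machinery of \cite{Hoff3}. As recalled in Section~\ref{theproperties}, $\DCFm$ is $\omega$-stable with quantifier elimination and elimination of imaginaries, and enjoys NFCP together with the boundary property $B(3)$; thus it fits the framework of \cite{Hoff3}, in which simplicity of the theory of existentially closed $G$-structures is governed by the size of the absolute Galois group of the underlying substructure. By Remark~\ref{diffaut} we have $\acl^{\DCFm}(K)=K^{\alg}$ and $\aut_{\Delta}(K^{\alg}/K)=\aut(K^{\alg}/K)$, so this absolute Galois group is precisely $\gal(K)$. Boundedness of $K$ is exactly the statement that $\gal(K)$ is small, and the simplicity criterion of \cite{Hoff3} then yields that $(K,\Delta,\Sigma)$ is (super)simple, closing the cycle and proving all four conditions equivalent.

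I expect the main obstacle to be $(1)\Rightarrow(4)$: extracting from \cite{Hoff3} the precise statement that smallness of the absolute Galois group suffices for simplicity of the existentially closed $G$-structure, in a form valid for an arbitrary (possibly infinite) $G$ rather than only in the finite setting of Section~\ref{theproperties}, where boundedness is automatic. A secondary point requiring care is the PAC-ness of $K$ when $G$ is infinite, since Corollary~\ref{fromTCF} is stated only for finite $G$; I would justify it by the regular-extension and existential-closedness construction indicated above, or equivalently by the PAC-substructure results of \cite{Hoff3}.
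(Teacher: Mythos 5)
Your proposal is correct and follows essentially the same route as the paper: the reduct implications are dismissed as immediate, $(2)\Leftrightarrow(1)$ is obtained from $K$ being a PAC-field (via Corollary~\ref{fromTCF} for finite $G$ and the PAC-substructure results of \cite{Hoff3} plus Remark~\ref{PACdiff} in general) together with the bounded-iff-simple characterization of PAC-fields, and $(1)\Rightarrow(4)$ is exactly the paper's appeal to the identification $\aut_\Delta(K^{\alg}/K)=\aut(K^{\alg}/K)$ making $(K,\Delta,\Sigma)$ Galois bounded, whence simplicity follows from the criterion in \cite{Hoff3} (Corollary 4.28 there). The only difference is cosmetic: you arrange the implications as a single cycle rather than the paper's $(4)\Rightarrow(3)\Rightarrow(2)$, $(1)\Leftrightarrow(2)$, $(1)\Rightarrow(4)$.
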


\begin{proof}
In a natural way, we have that (4)$\Rightarrow$(3)$\Rightarrow$(2).
We know that $K$ is a PAC-field: the case of finite $G$ is covered by Corollary \ref{fromTCF}, the general case follows from
Proposition 3.56 from \cite{Hoff3} (which yields that $(K,\D)$ is PAC-differential) combined with Remark \ref{PACdiff} (to get that $K$ is a PAC-field).
Thus $K$ is simple if and only if $K$ is bounded, so (1)$\iff$(2). We are done if we show (1)$\Rightarrow$(4).

Assume that $K$ is bounded. Because the absolute Galois group of $K$ in ACF is equal to the absolute Galois group of $K$ in the sense of $\DCF_{0,m}$, we have that $(K,\Delta,\Sigma)$ is bounded or equivalently \emph{Galois bounded} (see Definition 3.48 in \cite{Hoff3}). By Corollary 4.28 from \cite{Hoff3}, it follows that $(K,\Delta,\Sigma)$ is simple.
\end{proof}

Another fact in this direction can be obtained after a slight modification of Theorem 4.7 from \cite{BeyKow2018}:

\begin{proposition}
Assume that $G$ is a finitely generated virtually free group and assume that $\GDCF$ exists. Then:
\begin{enumerate}
    \item $\GDCF$ is simple if and only if $G$ is finite or $G$ is free,
    \item if $\GDCF$ is not simple, then it is not NTP$_2$.
\end{enumerate}
\end{proposition}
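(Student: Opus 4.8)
The plan is to follow the proof of Theorem 4.7 of \cite{BeyKow2018} essentially verbatim, using the differential machinery already assembled to absorb the passage from pure fields to $G$-differential fields. The crucial observation is that both dividing lines in the statement are, in our setting, properties of the absolute Galois group of the field reduct alone, and this group does not see the derivations: by Remark~\ref{diffaut} we have $\gal(K)=\aut_{\D}(K^{\alg}/K)$, so the absolute Galois group of $K$ computed in $\DCFm$ coincides with the field-theoretic one. Thus the entire Galois-theoretic input of \cite{BeyKow2018} transfers without change, and the only thing that is genuinely new is the reduction of the model-theoretic properties to boundedness.

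For (1), I would first invoke Proposition~\ref{prop:equivalent}: since $\GDCF$ exists, simplicity of $(K,\D,\S)$ is equivalent to $K$ being bounded, i.e. to $\gal(K)$ being small. It therefore suffices to show that $\gal(K)$ is small exactly when $G$ is finite or free. Here I would note that, just as in the finite case, existential closedness of $(K,\D,\S)$ forces the pure $G$-field reduct $(K,\S)$ to be algebraically $G$-closed and strict: a root of the minimal polynomial in a proper algebraic $G$-differential extension would witness an existential configuration over $K$ not realised in $K$ (and by Remark~\ref{diffaut} the extended automorphisms remain differential), contradicting existential closedness. Consequently the structure theory of \cite{BeyKow2018}, which describes the absolute Galois group of an algebraically $G$-closed strict $G$-field with $G$ finitely generated virtually free, applies to $K$: the group $\gal(K)$ is a finitely generated profinite group, and hence small, precisely when $G$ is finite or free, while it fails to be small for the remaining virtually free groups. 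Combined with Proposition~\ref{prop:equivalent}, this yields (1).

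For (2), suppose $\GDCF$ is not simple. By Proposition~\ref{prop:equivalent}, $K$ is an unbounded PAC-field of characteristic zero (it is PAC by Proposition 3.56 of \cite{Hoff3} together with Remark~\ref{PACdiff}). The same field-theoretic input used in \cite{BeyKow2018} shows that an unbounded PAC-field is not NTP$_2$; indeed it carries a field formula witnessing TP$_2$. Since the pure field $K$ is a reduct of $(K,\D,\S)$, and TP$_2$ is inherited by expansions (equivalently, NTP$_2$ passes to reducts), the same formula witnesses TP$_2$ in $(K,\D,\S)$, so $\GDCF$ is not NTP$_2$.

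The main obstacle I anticipate lies in (1): one must be certain that the precise group-theoretic computation of $\gal(K)$ from \cite{BeyKow2018} — the identification, in terms of $G$, of the absolute Galois group of an algebraically $G$-closed strict $G$-field — is genuinely insensitive to the presence of the derivations. The reduction of simplicity to boundedness (Proposition~\ref{prop:equivalent}) and the fact that the absolute Galois group ignores $\D$ (Remark~\ref{diffaut}) are exactly what legitimise the transfer; the remaining work is the routine verification that existential closedness in the $G$-differential category produces the same algebraic-closure behaviour of the $G$-action that is exploited in the pure-field argument.
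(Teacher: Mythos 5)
Your proposal is correct and follows essentially the same route as the paper: both reduce simplicity to boundedness of $K$ via Proposition~\ref{prop:equivalent}, transfer the Galois-theoretic computation of \cite{BeyKow2018} (non-smallness of the kernel of the Frattini cover for infinite non-free $G$) using the fact that $\gal(K)$ does not see the derivations, and obtain (2) from Chatzidakis's result that unbounded PAC-fields are not NTP$_2$ together with the passage to reducts. The only cosmetic differences are that the paper handles the positive direction of (1) by citing the simplicity results for finite $G$ (this paper) and free $G$ (\cite{OmarRahim}) directly rather than via boundedness, and reaches the unboundedness of $\gal(K)$ through $K\cap(K^G)^{\alg}$ and Corollary 5.6 and Lemma 2.15 of \cite{Hoff4} rather than by applying the $G$-field structure theory to $(K,\Sigma)$ itself.
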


\begin{proof}
If $G$ is finite or free, we know that $\GDCF$ is simple (by the results in this paper and by \cite{OmarRahim}, respectively).
Assume that $(K,\Delta,\Sigma)\models\GDCF$.
If $G$ is infinite, but not free, then the kernel of the Frattini cover of the profinite completion of $G$ is not a small profinite group (Theorem 4.6 in \cite{BeyKow2018}). On the other hand, the absolute Galois group (in $\DCFm$) of $K\cap (K^G)^{\alg}$ is isomorphic to this kernel (by Corollary 5.6 in \cite{Hoff4}), so $\aut_{\Delta}((K^G)^{\alg}/K\cap (K^G)^{\alg})$ is not a small profinite group. The restriction map 
$$\aut_{\Delta}(K^{\alg}/K)\to\aut_{\Delta}((K^G)^{\alg}/K\cap (K^G)^{\alg})$$
is onto (by Lemma 2.15 from \cite{Hoff4}), thus also $\aut_{\Delta}(K^{\alg}/K)=\aut(K^{\alg}/K)$ is not small. Then, Proposition \ref{prop:equivalent} implies that $(K,\Delta,\Sigma)$ is not simple.

Now we are going to prove the second item. If $(K,\D,\S)\models\GDCF$ is not simple, then $K$ is not bounded (by Proposition \ref{prop:equivalent}). However, as we saw in the proof of Proposition \ref{prop:equivalent} we know that $K$ is a PAC-field, and hence, by a result of Chatzidakis (Section 3 in \cite{Chatzi}), the theory of $K$ (as a field) is not NTP$_2$. Therefore, $(K,\D,\S)$ cannot be NTP$_2$.

\end{proof}

Due to the above result, existence of $\GDCF$ for infinite not-free finitely generated virtually free groups (e.g., the infinite dihedral group) may be relevant for the model theory of non-simple NSOP$_1$ structures, provided we can show a suitable analogue of Proposition \ref{prop:equivalent} (so we would like to find a relation between properties of the absolute Galois group of a model of $\GDCF$ and being an NSOP$_1$ structure). Such an analogue already exists in the case of PAC-fields (see Corollary 7.2.7 in \cite{NickThesis}), and more generally in the case of PAC structures (see Theorem 6.11 in \cite{HoffLee}), so one could expect that there will be some variant of such a result for PAC structures (or at least for PAC-differential fields) equipped with a group action (compare with the last paragraphs in \cite{HoffLee}).

\bigskip

\bibliographystyle{plain}
\bibliography{harvard1}

\end{document}